\documentclass[bibalpha]{amsart}
\usepackage{verbatim}
\usepackage{enumerate}

\title{Defining coarsenings of valuations}

\author{Franziska Jahnke} 
\address{Institut f\"ur Mathematische Logik\\Einsteinstr. 62\\48149 M\"unster, 
Germany}
\email{franziska.jahnke@uni-muenster.de}

\author{Jochen Koenigsmann}
\address{Mathematical Institute\\Radcliffe Observatory Quarter\\
Woodstock Road\\Oxford OX2 6GG, UK}
\email{koenigsmann@maths.ox.ac.uk}

\usepackage{enumerate}
\usepackage{braket}
\usepackage{verbatim}
\usepackage{txfonts}
\newtheorem{Th}{Theorem}[section]
\newtheorem{Thm}[Th]{Theorem}

\newtheorem*{Def}{Definition}
\newtheorem{Cor}[Th]{Corollary}
\newtheorem{Prop}[Th]{Proposition}

\newtheorem{Ex}[Th]{Example}
\newtheorem*{Rem}{Remark}

\newtheorem{Lem}[Th]{Lemma}
\newtheorem{Q}[Th]{Question}

\newtheorem*{Lem*}{Lemma}

\begin{document}
\begin{abstract}We 
study the question
which henselian fields admit definable henselian valuations (with or 
without parameters). 
We show that every field which admits a henselian valuation with non-divisible
value group admits a parameter-definable (non-trivial) henselian valuation.
In equicharacteristic $0$, we give a complete characterization of
henselian fields admitting a parameter-definable (non-trivial) 
henselian valuation.
We also obtain partial characterization results of fields admitting
 $\emptyset$-definable (non-trivial) henselian valuations. 
We then draw some Galois-theoretic conclusions from our results.  
\end{abstract}

\maketitle

\section{Introduction}
We study the question which henselian 
fields admit non-trivial henselian valuations
which are \emph{definable},
i.e., for which the valuation ring is first-order definable in
the language of rings.
Furthermore, we investigate whether
parameters are required for these definitions. Here, we call a field \emph{henselian}
if it admits some non-trivial henselian valuation.
There has been considerable progress in the area of definable henselian 
valuations over the last few years. 
Most recent results are focussed on defining a 
specific \emph{given}
henselian valuation on a henselian field, sometimes with formulae of 
low quantifier complexity (see \cite{CDLM}, \cite{Hon14}, 
\cite{AK14}, \cite{Fe14}, \cite{JK14a}, \cite{Pr14} and \cite{FJ14}). 
The question considered in this paper is however 
whether 
a given
henselian field admits at least \emph{some} non-trivial definable henselian
valuation. 
There are many henselian fields having both definable and non-definable
henselian valuations (cf.\,Example \ref{two}).

Neither separably closed fields nor real
closed fields admit any non-trivial definable valuations.
For real closed fields, this follows from quantifier elimination in the language
of ordered rings $\mathcal{L}_{ring} \cup \{<\}$: Any definable subset of a real
closed field is a finite union of intervals and points, and in particular
not a valuation ring. The fact that separably closed fields
do not admit any definable valuations is explained in 
\cite[Introduction, p.~1]{Koe94}.  
Hence, we focus on henselian fields which are neither separably closed 
nor real closed.
Any such field $K$ interprets a finite Galois extension $F$ such that for some 
prime $p$,
the canonical $p$-henselian valuation $v_F^p$ is $\emptyset$-definable
and non-trivial (cf.~Section \ref{sec:can} for the definition of the canonical 
$p$-henselian valuation).
This valuation is in particular 
comparable to any henselian valuation on $F$.
If $v_F^p$ is already henselian, then its restriction to $K$ gives a non-trivial
definable henselian valuation on $K$. If $v_F^p$ is non-henselian,
then any henselian valuation on $F$ is a coarsening of $v_F^p$.
Thus, the task of finding definable henselian valuations on $F$ (and thus on
$K$) comes down to defining (henselian) coarsenings of $v_F^p$.

We use two different methods to define coarsenings of a given (definable)
valuation on a field $F$:
In Section \ref{sec:anti}, we introduce \emph{$p$-antiregular} ordered
abelian groups. The case distinction between $p$-antiregular and
non-$p$-antiregular value groups is a key step in several of our
proofs. We also show how to define, 
for any prime $p$, the maximal $p$-divisible quotient of an ordered
abelian group (without any parameters).
The construction should be well-known to anyone with a good knowlegde
of definable convex subgroups of ordered abelian groups.
However, our approach is rather short and self-contained and should
be easily accessible to anyone with an interest in valuation theory. 
The main result of the section is Proposition 
\ref{antireg}, which gives conditions on the value group of a henselian
valuation under which some non-trivial coarsening is 
$\emptyset$-definable. 
In this section, we also discuss the construction of a field
which will be helpful in examples and counterexamples at several points 
later on (see Example \ref{PZ}). 
 
The other method we use is introduced in Section 
\ref{sec:AEJ}. 
Here, we discuss a certain class of parameter-definable convex subgroups
of ordered abelian groups. Again, our treatment is rather short and
self-contained.
This gives us the means to find a definable
henselian valuation on $K$ whenever some henselian valuation on $K$ has a
non-divisible value group (Proposition \ref{nondiv}).

We then proceed to apply these two basic constructions to give criteria
for the existence of $\emptyset$-definable and definable henselian
valuations. These criteria are phrased in terms of the value group 
$v_KK$ and the residue field $Kv_K$ of the canonical
henselian valuation $v_K$ on $K$
(cf.~Section \ref{sec:can} for the definition of $v_K$). 

In Section \ref{sec:0}, 
we discuss the existence of a non-trivial $\emptyset$-definable
henselian valuation on a field $K$. Here, our main result is the following:
{
\renewcommand{\theTh}{A}
\begin{Thm} \label{1}
Let $K \neq K^\mathrm{sep}$ be a henselian field. Then $K$ admits a
$\emptyset$-definable non-trivial henselian valuation unless
\begin{enumerate}
\item $Kv_K \neq Kv_K^\mathrm{sep}$ \emph{and}
\item $Kv_K$ is $t$-henselian \emph{and}
\item $v_KK$ is $p$-antiregular for all 
primes $p$ with $v_KK \neq p v_KK$ (e.g., if $v_KK$ is divisible). 
\end{enumerate}
\end{Thm}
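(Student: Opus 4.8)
The plan is to prove the contrapositive: assuming that at least one of the conditions (1)--(3) fails, I will produce a $\emptyset$-definable non-trivial henselian valuation on $K$. A first reduction: if $K$ were real closed, then $v_K$ would be a convex valuation, so $Kv_K$ would be real closed (hence not separably closed, and $t$-henselian) and $v_KK$ would be divisible, so all of (1)--(3) would hold; therefore, whenever one of (1)--(3) fails, $K$ is neither separably closed nor real closed, and the set-up of the introduction is available --- $K$ interprets over $\emptyset$ a finite Galois extension $F$ for which, for some prime $p$, the canonical $p$-henselian valuation $v_F^p$ is $\emptyset$-definable, non-trivial, and comparable to every henselian valuation on $F$. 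I will also use that the canonical henselian valuation (and its $p$-analogue) is compatible with finite extensions, so that $v_F|_K=v_K$ and $Fv_F$ is a finite extension of $Kv_K$, together with the standard facts that coarsenings and finite composites of henselian valuations are henselian, and that, if $v_F^p$ is non-henselian, every henselian valuation on $F$ is a coarsening of it.

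If (3) fails I am done at once: there is a prime $p$ with $v_KK\neq p\,v_KK$ such that $v_KK$ is not $p$-antiregular, so Proposition~\ref{antireg}, applied to the henselian valuation $v_K$, yields a $\emptyset$-definable non-trivial coarsening of $v_K$, which is again henselian --- no passage to $F$ is needed here. If (1) fails, then $Kv_K$, hence also $Fv_F$, is separably closed, and I claim $v_F^p$ is then henselian: it is comparable to the henselian valuation $v_F$, a coarsening of $v_F$ is henselian, while a proper refinement is of the form $v_F\circ\bar v$ with $\bar v$ a non-trivial valuation on the separably closed field $Fv_F$, hence henselian, so that $v_F^p$ is a composite of henselian valuations and therefore henselian; restricting $v_F^p$ to $K$ then gives the required valuation, as discussed in the introduction.

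The main case is when (2) fails, i.e.\ $Kv_K$ is not $t$-henselian. I pass to $F$ again; if $v_F^p$ is henselian I conclude exactly as in the previous case, so I may assume $v_F^p$ is non-henselian, and I will derive a contradiction. By the argument of the previous paragraph, $Fv_F$ cannot be separably closed; hence $Kv_K$ is not separably closed either, and moreover no henselian valuation on $F$ has separably closed residue field, so $v_F$ is the \emph{finest} henselian valuation on $F$ and $Fv_F$ admits no non-trivial henselian valuation. Since $v_F^p$ is non-henselian and comparable to $v_F$, it must properly refine $v_F$, say $v_F^p=v_F\circ\bar v$; by the standard composition properties, $\bar v$ is a non-trivial $p$-henselian valuation on $Fv_F$, and it is non-henselian because $Fv_F$ carries no non-trivial henselian valuation. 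Transporting this situation down along the finite extension $F/K$ --- using $v_F|_K=v_K$, that $v_K$ is likewise the finest henselian valuation on $K$, and that the restriction of a $p$-henselian valuation to a finite-index subfield is $p$-henselian --- I obtain a non-trivial, non-henselian $p$-henselian valuation on $Kv_K$. But a field carrying such a valuation is $t$-henselian, contradicting the assumption that $Kv_K$ is not $t$-henselian; hence $v_F^p$ is henselian after all, and I finish as before.

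I expect the decisive point to be that last input: \emph{a field carrying a non-trivial, non-henselian $p$-henselian valuation is $t$-henselian} (equivalently, on a field that is not $t$-henselian every $p$-henselian valuation is henselian). This is where the Prestel--Ziegler theory of $t$-henselian fields, together with the structure of canonical $p$-henselian valuations recalled in Section~\ref{sec:can}, is genuinely needed; everything else --- comparability, composition and restriction of henselian and $p$-henselian valuations, the functoriality of the canonical henselian valuation under finite extensions, and the appeal to Proposition~\ref{antireg} --- is routine bookkeeping.
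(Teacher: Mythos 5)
Your handling of the failure of condition (3) coincides with the paper's (a direct appeal to Proposition \ref{antireg}), and your argument for the failure of condition (1) is a reasonable self-contained substitute for the paper's citation of an external result: if $Fv_F$ is separably closed then every valuation on it is henselian, so $v_F^p$, being comparable to $v_F$, is either a (henselian) coarsening of $v_F$ or a composite of henselian valuations, hence henselian either way. The paper itself simply quotes three results --- [JK14a, Theorem 3.10] for ($\neg$1), [FJ14, Proposition 5.5] for ($\neg$2), and Proposition \ref{antireg} for ($\neg$3) --- so your attempt to reprove the first two from scratch is where the risk lies.

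The genuine gap is in your treatment of ($\neg$2), and it is fatal to that case: the ``decisive point'' you isolate --- that a field carrying a non-trivial, non-henselian, $p$-henselian valuation must be $t$-henselian --- is false. A $p$-henselian valuation only controls Galois extensions of $p$-power degree, whereas $t$-henselianity requires (up to elementary equivalence) a topological Hensel's lemma in every degree. Concretely, let $k$ be the $2$-henselization of $\mathbb{Q}(t)$ with respect to the $t$-adic valuation, i.e., the decomposition field of an extension of $v_t$ to $\mathbb{Q}(t)(2)$. Then $k$ carries a non-trivial $2$-henselian valuation which is not henselian, yet $k$ has finite transcendence degree over $\mathbb{Q}$, so by [Koe04, Lemma 3.5] (quoted in the proof of Corollary \ref{cor:fin}) it would be henselian if it were $t$-henselian; hence $k$ is not $t$-henselian. (The intermediate fields $K_n$ of Example \ref{PZ} exhibit the same phenomenon.) So the contradiction you aim for is never reached. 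Worse, the overall strategy for ($\neg$2) --- ``show that $v_F^p$ is henselian'' --- cannot be repaired: for $K=k((\mathbb{Q}))$ with $k$ as above (adjusted so that $\sqrt{-1}\in k$), the residue field $Kv_K=k$ is not $t$-henselian, yet $v_K^2$ is the composition of $v_K$ with the non-henselian canonical $2$-henselian valuation of $k$ and is therefore itself not henselian. In this situation the theorem is obtained by an entirely different mechanism, namely [FJ14, Proposition 5.5], which uses a first-order witness to the failure of $t$-henselianity of the residue field to define the valuation directly, rather than by locating a henselian coarsening of the canonical $p$-henselian valuation.
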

}
See Section \ref{sec:anti} for the definition of $t$-henselianity.
Note that the case that $K$ is real closed is covered by the `unless' setting:
In this case, $Kv_K$ is an archimedean ordered real closed field and
hence $t$-henselian without being henselian.
The 
theorem implies that
every (non-separably or non-real closed) henselian field of 
finite transcendence degree over its prime field
admits a non-trivial $\emptyset$-definable henselian valuation 
(Corollary \ref{cor:fin}). 
As another consequence, we get a classification of all fields with 
small absolute
Galois group admitting $\emptyset$-definable henselian valuations,
provided that the canonical henselian valuation has
residue characteristic $0$ (Corollary \ref{cor:small}). However,
the conditions described in Theorem \ref{1} are not sufficient for a full
characterization of fields admitting $\emptyset$-definable
non-trivial henselian valuations (see Example \ref{ex:anti} and Proposition
\ref{1.1}).

In Section \ref{sec:1}, we discuss the existence of a non-trivial definable 
henselian valuation on a field $K$. Here, we prove the following:
{
\renewcommand{\theTh}{B}
\begin{Thm} \label{2}
Let $K \neq K^\mathrm{sep}$ be a henselian field. Then $K$ admits a
definable non-trivial henselian valuation (using at most $1$ parameter) 
unless
\begin{enumerate}
\item $Kv_K \neq Kv_K^\mathrm{sep}$ \emph{and}
\item $Kv_K$ is $t$-henselian \emph{and}
\item $v_KK$ is divisible. 
\end{enumerate}
\end{Thm}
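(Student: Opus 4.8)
The plan is to derive this from the two tools already in hand, Proposition \ref{nondiv} and Theorem \ref{1}, by case-splitting on whether the value group $v_KK$ of the canonical henselian valuation is divisible; in fact Theorem \ref{2} is essentially a restatement of Theorem \ref{1}, once one sees that the non-divisible case is harmless.

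Suppose first that $v_KK$ is not divisible. Then condition (3) of the theorem fails, so we must produce a valuation with no further assumption on (1) and (2). Since $K$ is henselian, $v_K$ is a non-trivial henselian valuation on $K$, and its value group is non-divisible by hypothesis; hence Proposition \ref{nondiv} applies to $v_K$ and yields a non-trivial definable henselian valuation on $K$ using at most one parameter.

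Now suppose $v_KK$ is divisible, so condition (3) holds. Assuming $K$ is not in the exceptional case of Theorem \ref{2}, conditions (1) and (2) do not both hold, and we would apply Theorem \ref{1}. Its exceptional case asks, in addition to (1) and (2), that $v_KK$ be $p$-antiregular for every prime $p$ with $v_KK \neq p v_KK$; but a divisible group admits no such prime, so this clause holds vacuously and the exceptional case of Theorem \ref{1} collapses here to ``(1) and (2)'', which we are assuming to fail. Thus Theorem \ref{1} supplies a non-trivial $\emptyset$-definable henselian valuation on $K$, which a fortiori is definable using at most one parameter. Together the two cases prove the theorem.

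The substance of the argument lies entirely in the two cited results — the parameter-definable convex subgroups feeding Proposition \ref{nondiv}, and the $p$-antiregular versus non-$p$-antiregular dichotomy underlying Theorem \ref{1} — so the only genuine obstacle here is careful bookkeeping of the two ``unless'' clauses: one must check that divisibility of $v_KK$ really does trivialise the $p$-antiregularity requirement of Theorem \ref{1}, and that the valuation produced by Proposition \ref{nondiv} costs no more than one parameter. As a sanity check that the exceptional case is non-empty, note that every real closed henselian field satisfies (1)--(3): its canonical residue field is archimedean real closed, hence $t$-henselian and not separably closed, and its value group is divisible.
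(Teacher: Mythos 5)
Your proof is correct and follows essentially the same route as the paper: Proposition \ref{nondiv} handles the non-divisible case, and Theorem \ref{1} handles the divisible case after observing that divisibility makes the $p$-antiregularity clause vacuous. (The paper's own proof in Section \ref{sec:1} additionally invokes Proposition \ref{1.1} because it actually establishes a sharper version of the theorem, with condition (2) strengthened to ``$Kv_K \prec L$ for some henselian $L$ with $v_LL$ divisible''; for the statement as given here, your two-case argument suffices.)
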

}
Furthermore, in equicharacteristic $0$, this Theorem gives rise 
to a characterization of henselian fields admitting non-trivial
definable henselian valuations (cf.~Corollary \ref{0,0}). 
We also give an example of a henselian field without a 
definable non-trivial henselian valuation
and an example of a henselian field which admits a definable non-trivial
henselian valuation but no $\emptyset$-definable such.

We study the existence of ($\emptyset$-)definable
($p$-)henselian valuations tamely branching at $p$ in the last section 
(which also
contains the definition of tamely branching valuations). By the results in
\cite{Koe03}, these
are exactly the henselian valuations encoded in the absolute Galois group
$G_K$ of a field $K$. Our main result in this context is as follows:
{
\renewcommand{\theTh}{C}
\begin{Thm} \label{3}
Let $K$ be a field and $p$ a prime. 
\begin{enumerate}
\item If $K$ admits a henselian valuation $v$ tamely branching at $p$,
then $K$ admits a definable such (using at most $1$ parameter). 
\item Assume $\zeta_p \in K$ and, in case
$p=2$ and $\mathrm{char}(K)=0$, assume also $\sqrt{-1} \in K$.
If $K$
admits a $p$-henselian valuation tamely branching at $p$,
then $K$ admits a $\emptyset$-definable such.
\end{enumerate}
\end{Thm}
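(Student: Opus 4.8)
My overall strategy is to reduce both statements to the two coarsening constructions developed in Sections~\ref{sec:anti} and~\ref{sec:AEJ}, together with the structure theory of valuations tamely branching at $p$. Recall that a henselian valuation $v$ is tamely branching at $p$ precisely when the branching happens "tamely", which in valuation-theoretic terms forces the residue field to carry a prime $p$ in a controlled way and, crucially, forces either $v K \neq p\, vK$ or $Kv$ to admit a $p$-divisible obstruction; in particular such a $v$ is always comparable to the canonical $p$-henselian valuation $v^p_K$. So the first reduction is: replace $v$ by $v^p_K$ (which is $\emptyset$-definable by the discussion in the Introduction) and observe that any henselian valuation tamely branching at $p$ is a coarsening of $v^p_K$. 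The task then becomes to define a suitable coarsening of $v^p_K$ that is still henselian and still tamely branching at $p$.

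For part~(1), I would split on the value group $v^p_K K =: \Gamma$. If $\Gamma$ is non-divisible, Proposition~\ref{nondiv} directly produces a definable henselian valuation on $K$ using at most one parameter; one then checks that the coarsening it yields retains tame branching at $p$, which holds because the construction in Section~\ref{sec:AEJ} only passes to a convex subgroup containing the branching data. If $\Gamma$ is divisible, then the hypothesis that $v$ tamely branches at $p$ must instead be witnessed on the residue side; here I would invoke Proposition~\ref{antireg}: either some $\Gamma$ is $p$-antiregular (and then we are in a degenerate configuration forcing $v^p_K$ itself to be henselian, so it is already $\emptyset$-definable and we are done, with zero parameters), or it is not $p$-antiregular and Proposition~\ref{antireg} hands us an $\emptyset$-definable non-trivial coarsening. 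The one-parameter bound is inherited from Proposition~\ref{nondiv}, which is the only place a parameter is spent.

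For part~(2), the extra hypotheses $\zeta_p \in K$ (and $\sqrt{-1}\in K$ when $p=2$, $\mathrm{char}\,K = 0$) are exactly what is needed to make the $p$-antiregular coarsening construction of Section~\ref{sec:anti} run $\emptyset$-definably in the $p$-henselian (rather than henselian) setting: the presence of $p$th roots of unity makes the relevant $p$-Kummer-theoretic definition of the coarsening uniform and parameter-free, and the $p=2$ caveat handles the usual obstruction coming from $-1$ not being a square. So I would redo the argument of part~(1) tracking "$\emptyset$-definable" throughout: start from $v^p_K$ ($\emptyset$-definable, $p$-henselian), and if its value group is non-divisible apply the $\emptyset$-definable version of the coarsening (available now because $\zeta_p \in K$ upgrades the one-parameter construction of Section~\ref{sec:AEJ} to a parameter-free one), and if it is divisible apply Proposition~\ref{antireg} as before, which is already parameter-free. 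In each branch one verifies the resulting coarsening is $p$-henselian and tamely branching at $p$.

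The main obstacle I anticipate is the upgrade from "$1$ parameter" to "$\emptyset$-definable" in part~(2): showing that $\zeta_p \in K$ genuinely eliminates the parameter in the Section~\ref{sec:AEJ} construction, and correctly isolating why $p=2$ with $\mathrm{char}\,K=0$ needs the additional assumption $\sqrt{-1}\in K$, will require care — one must rule out the pathological behaviour of the $2$-adic-type configurations where the relevant definable convex subgroup is not captured without $\sqrt{-1}$. A secondary but routine point is checking stability of "tamely branching at $p$" under passing to the coarsenings produced by Propositions~\ref{antireg} and~\ref{nondiv}; I expect this to follow formally from the definition of tame branching once unwound, since both constructions only coarsen by convex subgroups lying inside the branching locus.
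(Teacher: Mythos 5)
There is a genuine gap, and in fact several of the steps you describe would fail as stated. The central problem is your claim that the coarsenings produced by Propositions \ref{antireg} and \ref{nondiv} ``retain tame branching at $p$.'' They need not: condition (2) of the definition requires the value group of the coarsening to be non-$p$-divisible, but the last case in the proof of Proposition \ref{antireg} explicitly produces a coarsening of $v_K^p$ whose value group \emph{is} $p$-divisible, and such a valuation can never tamely branch at $p$. There is also condition (3) to check (existence of a separable extension of $Kw$ of degree divisible by $p^2$ when $[wK:pwK]=p$), which does not ``follow formally.'' Your case analysis is also internally inconsistent: if $\Gamma=v_K^pK$ is divisible it cannot be $p$-antiregular (a non-trivial $p$-antiregular group has no non-trivial $p$-divisible quotient, in particular is not itself $p$-divisible), Proposition \ref{antireg} is inapplicable when $\Gamma$ is ($p$-)divisible since its hypothesis is precisely that $\Gamma$ is \emph{not} $p$-divisible, and $p$-antiregularity of $\Gamma$ does not force $v_K^p$ to be henselian. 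Finally, your opening reduction assumes every henselian valuation tamely branching at $p$ is a coarsening of $v_K^p$; when $Kv=Kv(p)$ it is a refinement, and one must then \emph{argue} that $v_K^p$ is still tamely branching (the point being that the intermediate value group is $p$-divisible, so non-$p$-divisibility survives the quotient) — this is exactly case (1) of the paper's proof of Proposition \ref{ptame} and cannot be skipped.

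The paper's route is different and you should compare. Part (2) is Proposition \ref{ptame}: the hypotheses $\zeta_p\in K$ (and $\sqrt{-1}\in K$ for $p=2$) are there to make $v_K^p$ itself $\emptyset$-definable via Theorem \ref{MT}, not to run a Kummer-theoretic upgrade of the Section \ref{sec:AEJ} construction. The proof splits on whether $Kv=Kv(p)$ and on $\mathrm{char}(Kv_K^p)$; in the residue characteristic $p$ case the parameter in the subgroup $\Delta_\gamma$ of Lemma \ref{Dg} is eliminated by taking $\gamma=v_K^p(p)$, the value of the $\emptyset$-definable element $p\in K$ — this is the actual mechanism for parameter-freeness, and it is absent from your plan. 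Part (1) is Proposition \ref{tb}: if $Kv_K=Kv_K(p)$ one reduces to Proposition \ref{ptame}; otherwise one applies Lemma \ref{Dg} with $\gamma\in vK$ arbitrary and observes that if $\Gamma/\langle\gamma\rangle$ is $p$-divisible for \emph{every} $\gamma$, then $vK$ is $p$-regular and non-$p$-divisible, so $v$ itself is $\emptyset$-definable by Theorem \ref{Hong}. Neither Proposition \ref{antireg} nor Proposition \ref{nondiv} is invoked as a black box, precisely because their outputs do not control tame branching.
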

This theorem is an immediate consequence of Propositions 
\ref{ptame} and \ref{tb}.  
As an application, we also obtain some Galois-theoretic consequences 
(cf.~Corollaries \ref{cor:tame} and \ref{cor:nottame}).
}

\section{Canonical ($p$-)henselian valuations}
Throughout the paper, we use the following notation: \label{sec:can}
For a valued field $(K,v)$, we write $Kv$ for its residue field and
$vK$ for its value group. Furthermore, we denote the
valuation ring of $v$ by 
$\mathcal{O}_v$ and its maximal ideal by $\mathfrak{m}_v$.
If $p \neq \mathrm{char}(K)$ is a prime, we write $\zeta_p \in K$ 
to denote that $K$ contains a primitive $p$th root of unity.
For basic facts about ($p$-)henselian valued fields, we refer the reader
to \cite{EP05}.

\subsection{The canonical henselian valuation}
Let $K$ be a \emph{henselian} field, i.e., 
assume that $K$ admits some non-trivial
henselian valuation. In general, $K$ may admit many
non-trivial henselian valuations,
however, unless $K$ is separably closed, 
they all induce the same topology on $K$. When we ask which henselian fields
admit a definable non-trivial 
henselian valuation, we do not specify which one should 
be definable.
In all our constructions, we define coarsenings of the canonical
henselian valuation. 
Recall that on a henselian valued field, any two henselian valuations with non-separably closed
residue field are comparable.

The \emph{canonical henselian valuation} $v_K$ on $K$
is defined as follows: If $K$ admits a henselian valuation 
with separably closed residue field, then $v_K$ is the (unique) coarsest such. In this case, any
henselian valuation with non-separably closed residue field is a proper coarsening
of $v_K$ and any henselian valuation with separably closed residue field
is a refinement of $v_K$.
If $K$ admits no henselian valuations with separably closed residue field, then $v_K$
is the (unique) finest henselian valuation on $K$ and any two henselian valuations
on $K$ are comparable.

\subsection{The canonical $p$-henselian valuation}
Let $K$ be a field and $p$ a prime. We define $K(p)$ to be the compositum of all Galois extensions
of $K$ of $p$-power degree. A valuation $v$ on $K$ is called
\emph{$p$-henselian} if $v$ extends uniquely to $K(p)$, furthermore, we say that $K$ is $p$-henselian
if it admits a non-trivial $p$-henselian valuation. 
Note that every henselian valuation is
$p$-henselian for all primes $p$ but, in general, not vice versa. 

Similarly to the henselian sitatuation, there is a canonical $p$-henselian valuation. Here,
one replaces the notion of `separably closed' by 
`admitting no Galois extensions
of degree $p$'.
Again, on a $p$-henselian field, any two $p$-henselian valuation whose residue fields admit Galois extensions
of degree $p$ are comparable.
The canonical henselian valuation $v_K^p$ on $K$
is defined as follows: If $K$ admits a $p$-henselian valuation 
with residue field not admitting Galois extensions of degree $p$, 
then $v_K^p$ is the (unique) coarsest such. In this case, any
$p$-henselian valuation with residue field admitting Galois extensions of degree $p$ 
is a proper coarsening
of $v_K^p$ and any $p$-henselian valuation whose residue field 
does not admit such extensions
is a refinement of $v_K^p$.
If there are no $p$-henselian valuations with residue field not admitting Galois extensions of degree
$p$ on $K$, then $v_K^p$
is the (unique) finest $p$-henselian valuation on $K$.
Whenever $K$ admits a non-trivial $p$-henselian valuation, 
$v_K^p$ is non-trivial and comparable to all $p$-henselian valuations on $K$.

Unlike the canonical henselian valuation, in most cases the canonical $p$-henselian 
valuation is
definable in $\mathcal{L}_{ring}$.
\begin{Thm}[Main Theorem in \cite{JK14}] 
Let $p$ be a prime. Consider the (elementary) class of fields
\label{MT}
\begin{align*}
\mathcal{K}_p:=\big\{ K \neq K(p) \;\mid\;\, & \zeta_p \in K \textrm{ in case }\mathrm{char}(K)\neq p \\
& \textrm{ and }
\sqrt{-1} \in K \textrm{ in case }p=2 \textrm{ and }\mathrm{char}(K)=0  \big\}.
\end{align*}
Then, the canonical $p$-henselian valuation is uniformly $\emptyset$-definable 
in $\mathcal{K}_p$, i.e.~there is a parameter-free $\mathcal{L}_{ring}$-formula $\phi_p(x)$ such that in any
$K \in \mathcal{K}_p$ we have $$\phi_p(K)=\mathcal{O}_{v_K^p}.$$ 
\end{Thm}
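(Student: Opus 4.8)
The plan is to reduce the definability of $\mathcal O_{v_K^p}$ to a statement about the Galois theory of $p$-extensions of $K$, which the hypothesis that $\zeta_p \in K$ (or $\mathrm{char}(K)=p$) renders \emph{uniformly} first-order. If $\mathrm{char}(K)\neq p$ then $\zeta_p\in K$, and by Kummer theory the degree-$p$ Galois extensions of $K$ correspond to the nontrivial cosets of $K^{\times p}$ in $K^\times$; if $\mathrm{char}(K)=p$, Artin--Schreier theory gives the same picture with $K^\times/K^{\times p}$ replaced by $K/\wp(K)$, where $\wp(x)=x^p-x$. In both cases the relevant quotient group, and the pairing recording which of these extensions sit inside a given decomposition field, are $\emptyset$-interpretable uniformly over $\mathcal K_p$; so a group-theoretic description of the decomposition data of $v_K^p$ inside $G:=\mathrm{Gal}(K(p)/K)$ stands a chance of being transcribed into a single $\mathcal L_{ring}$-formula $\phi_p$. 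I will describe the Kummer case; the Artin--Schreier case runs in parallel.

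The first step is to isolate the $\emptyset$-definable set $R=R_p(K)\subseteq K^\times$ of \emph{$p$-rigid} elements: those $a$ for which $1+aK^{\times p}\subseteq\langle a\rangle K^{\times p}$. Rigidity is visibly first-order --- one universal quantifier to choose the $K^{\times p}$-representative and one existential to witness a $p$-th power. Two classical facts drive the proof. First, if $v$ is a $p$-henselian valuation on $K$, then (at least after passing to a suitable coarsening) every element of non-$p$-divisible value is $p$-rigid, so a $p$-henselian valuation with non-$p$-divisible value group forces $R$ to be ``large''. Second, conversely (Ware; Arason--Elman--Jacob; Engler--Koenigsmann), if $R$ is large enough --- precisely, if $R\cup\{0\}$ is, up to finite index bounded in terms of $[K^\times:K^{\times p}]$, a $K^{\times p}$-stable subgroup of $K^\times$ --- then $K$ carries a nontrivial $p$-henselian valuation $w$ with $1+\mathfrak m_w\subseteq K^{\times p}$, whose valuation ring $\mathcal O_w$ is recovered from $R$ by the standard construction of a valuation from its rigid elements. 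This is exactly where the hypothesis $\sqrt{-1}\in K$ is used when $p=2$ and $\mathrm{char}(K)=0$: an ordering of $K$ would produce spurious rigid elements and can break the second fact, and requiring $\sqrt{-1}\in K$ makes $K$ non-orderable.

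In the ``large'' case one then promotes $w$ to $v_K^p$: both valuations are $p$-henselian, hence comparable, and $v_K^p$ is obtained from $w$ by the canonical coarsening or refinement dictated by whether the residue field $Kw$ admits a Galois extension of degree $p$ --- a condition read off, again definably, from the residue-field version of Kummer (resp.\ Artin--Schreier) theory computed inside $\mathcal O_w/\mathfrak m_w$. Putting these pieces together yields a formula defining $\mathcal O_{v_K^p}$ simultaneously on all ``large'' $K\in\mathcal K_p$. In the complementary ``small'' case, $R$ fails the largeness bound, which pins $G$ down to a short explicit list --- essentially $G$ a quotient of $\mathbb Z_p$, together with finitely many close relatives separated by the finite invariant $[K^\times:K^{\times p}]$. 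There, either $K$ carries no nontrivial $p$-henselian valuation, so $v_K^p$ is trivial and $\phi_p$ must simply cut out $K$ itself, or $\mathcal O_{v_K^p}$ is forced by an explicit condition on finitely many elements. Since being in the ``small'' case is itself $\emptyset$-definable (a bound on $[K^\times:K^{\times p}]$ and on the index of $R$), this recipe folds into $\phi_p$ as a finite case distinction inside the formula.

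The main obstacle is uniformity rather than definability on a fixed field: on each individual $K$ the statement is close to the classical theory of $p$-rigid elements, but producing one parameter-free $\phi_p$ valid throughout $\mathcal K_p$ requires checking that (a) the index thresholds separating the ``large'' and ``small'' cases are absolute, so that the case split inside $\phi_p$ is legitimate; (b) the passage from $R$ (or from $w$) to $v_K^p$, including the residue-field coarsening/refinement step, uses only uniformly bounded data; and (c) each of the finitely many exceptional groups arising in the ``small'' case is dispatched by an explicit sub-formula, with the $p=2$ analysis treated with care. Once this bookkeeping is in place, the finitely many subcases assemble into a single formula $\phi_p$ as required.
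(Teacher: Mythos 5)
First, note that the paper under review does not prove this statement at all: Theorem \ref{MT} is quoted as the Main Theorem of \cite{JK14}, so your proposal can only be measured against the proof given there. At the level of strategy you have reconstructed that proof's skeleton correctly for residue characteristic different from $p$: Kummer (resp.\ Artin--Schreier) theory makes the degree-$p$ Galois extensions $\emptyset$-interpretable, the $p$-rigid elements and the Arason--Elman--Jacob construction turn a large definable subgroup of $K^\times$ into a $p$-henselian valuation ring, the hypothesis $\sqrt{-1}\in K$ for $p=2$ serves exactly to exclude orderings, and the exceptional small pro-$p$ Galois groups are split off by a definable bound on $[K^\times:(K^\times)^p]$.

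The genuine gap is the mixed-characteristic case $\mathrm{char}(K)=0$, $\mathrm{char}(Kv_K^p)=p$, which your sketch passes over with the phrase ``at least after passing to a suitable coarsening''. In this case your first ``classical fact'' is false: once the residue characteristic is $p$ one no longer has $1+\mathfrak{m}_v\subseteq (K^\times)^p$, and elements of non-$p$-divisible value need not be rigid. Concretely, for $K=\mathbb{Q}_p(\zeta_p)$ with uniformizer $\pi=\zeta_p-1$ one has $1+\pi=\zeta_p\notin\langle\pi\rangle (K^\times)^p$ (else $\zeta_{p^2}\in K$), so $\pi$ is not rigid although $v(\pi)$ generates $vK/pvK$; moreover the proposed coarsening is the trivial valuation here, since the $p$-adic valuation has rank $1$. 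So the rigid-element machinery simply does not see $v_K^p$ in this case, and \cite{JK14} must (and does) treat residue characteristic $p$ by a separate argument --- the saving observation being that in mixed characteristic the element $p=1+\dots+1$ is itself $\emptyset$-definable and can absorb the parameter that the construction would otherwise require. A secondary soft spot: in your ``promotion'' of the rigid-element valuation $w$ to $v_K^p$, testing whether $Kw$ admits a degree-$p$ Galois extension only decides on which side of $w$ the valuation $v_K^p$ lies; when $v_K^p$ properly refines $w$ you still owe a definition of that refinement, which requires the precise extremal characterization of the AEJ ring with respect to $(K^\times)^p$ rather than mere comparability.
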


\section{Antiregular value groups} \label{sec:anti}
In this section, we use specific properties of the value group of the
canonical $p$-henselian valuation to define (henselian) coarsenings without
parameters. We first recall some work by Hong on defining valuations with
regular value groups which we make use of in some of our proofs. 
We then define a property of ordered abelian groups which 
we call antiregular and show a definability result for non-antiregular
value groups. Throughout the section, all quotients
of ordered abelian groups considered are assumed to be quotients 
by convex subgroups.
\begin{Def}
Let $\Gamma$ be an ordered abelian group and $p$ a prime. 
Then, $\Gamma$ is \emph{$p$-regular}
if all proper quotients of $\Gamma$ are $p$-divisible. Furthermore,
$\Gamma$ is \emph{regular} if it is $p$-regular for all primes $p$.
\end{Def}

Note that $p$-regularity is an elementary property of $\Gamma$:
$$\Gamma\textrm{ is } p\textrm{-regular }\Longleftrightarrow\;
\forall \gamma_0, \dotsc, \gamma_p\,(\gamma_0 < \dots < \gamma_p
\rightarrow \exists \delta\, (\gamma_0 \leq p\delta \leq \gamma_p))$$
Furthermore, an ordered abelian group is regular if and only if it is
elementarily equivalent to an archimedean ordered group. See \cite{Zak}
for more details on ($p$-)regular ordered abelian groups.
Hong proved 
the following
definability results about ($p$-)henselian valuations with ($p$-)regular
value groups.
\begin{Thm}[{\cite[Theorems 3 and 4]{Hon14}}]
Let $(K, v)$ be a valued field. \label{Hong}
\begin{enumerate}
\item Assume that $(K,v)$ is $p$-henselian and that we have 
$\zeta_p \in K$ in case
$\mathrm{char}(K)\neq p$. If $vK$ is $p$-regular and not $p$-divisible, 
then $v$ is definable.
\item If $(K,v)$ is henselian and $vK$ is regular
but not divisible, then $v$ is $\emptyset$-definable. 
\end{enumerate}
\end{Thm}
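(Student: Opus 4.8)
The plan is to define $\mathcal{O}_v$ (equivalently $\mathfrak{m}_v$ or $\mathcal{O}_v^{\times}$) inside $K$ using the $\emptyset$-definable subgroup $K^{\times p}$ of nonzero $p$-th powers — or, when $\mathrm{char}\,K=p$, the $\emptyset$-definable Artin--Schreier subgroup $\wp(K)=\{x^{p}-x:x\in K\}$ — exploiting that $p$-henselianity makes these subgroups blind to $v$-small perturbations and that a $p$-regular value group is, up to elementary equivalence, archimedean, so that the residual ``$vK$ modulo $p$'' information these subgroups see already pins down $v$. An alternative, once one has reduced to the class $\mathcal{K}_p$, is to invoke the Main Theorem of \cite{JK14}, which makes $v_K^p$ $\emptyset$-definable, and then to identify $v$ as the unique coarsening of $v_K^p$ whose value group is $p$-regular and not $p$-divisible (any other coarsening of $v_K^p$, by $p$-regularity, has $p$-divisible value group), the relevant convex subgroup of $v_K^p K$ being $\emptyset$-definable by the convex-subgroup methods of Section~\ref{sec:anti}; below I outline the first, more self-contained, route.

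For (1): $p$-henselianity gives the local facts $1+\mathfrak{m}_v\subseteq K^{\times p}$ when $\mathrm{char}(Kv)\neq p$ (solve $X^{p}=1+m$, using $\zeta_p\in K$ for separability of the derivative), $\mathfrak{m}_v\subseteq\wp(K)$ when $\mathrm{char}(Kv)=p$ (solve $X^{p}-X=m$, with unit derivative $-1$), and $1+\mathfrak{m}_v^{N}\subseteq K^{\times p}$ with $N$ large in terms of $v(p)$ in the mixed case. Hence cosets of $K^{\times p}$ (resp. $\wp(K)$) do not separate $v$-close elements, the composite $K^{\times}\twoheadrightarrow vK\twoheadrightarrow vK/pvK$ realises $vK/pvK$ as a quotient of $K^{\times}/K^{\times p}$, and non-$p$-divisibility of $vK$ furnishes $t\in K^{\times}$ with $v(t)\notin pvK$. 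Since $vK$ is $p$-regular, every proper coarsening of $(K,v)$ has $p$-divisible value group, so no proper $p$-henselian coarsening of $v$ remains ``non-$p$-divisible''; this is what will force the formula below to capture $v$ itself and not a coarsening. One then writes $\mathcal{O}_v$ as a bounded first-order combination of membership conditions ``$\cdot\in t^{i}K^{\times p}$'', $0\le i<p$, and conditions expressing $v$-nearness to $1$ through translates of $K^{\times p}$: because $vK$ is, up to elementary equivalence, archimedean, only finitely many such ``digits'' are needed, exactly as in the classical definability of $\mathbb{Z}$-valued $p$-henselian fields (Ax--Kochen, Prestel--Ziegler, Koenigsmann). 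The $\mathrm{char}(Kv)=p$ and mixed cases run identically with $\wp(K)$, resp. with the coarser rounding, replacing $1+\mathfrak{m}_v\subseteq K^{\times p}$.

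For (2): $vK$ is regular iff it is $p$-regular for every prime $p$, and $vK$ not divisible means $vK\neq pvK$ for some prime $p$; fix such a $p$, so the value-group hypotheses of (1) hold. Part (1) already yields definability of $v$; to upgrade to $\emptyset$-definability we must drop the auxiliary assumptions $\zeta_p\in K$ and (for $p=2$, $\mathrm{char}\,K=0$) $\sqrt{-1}\in K$, and eliminate the parameter. Both are achieved by base change: pass to the finite extension $L:=K(\zeta_p)$, resp. $L:=K(\zeta_p,\sqrt{-1})$; the unique (henselian) extension $w$ of $v$ to $L$ again has $p$-regular, non-$p$-divisible value group (the index $[wL:vK]$ divides $[L:K]$, and a straightforward check shows this cannot destroy either property), so $\mathcal{O}_w$ is definable over $L$ by (1), and $\mathcal{O}_v=\mathcal{O}_w\cap K$. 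Interpreting $L$ over $K$ in the usual way and using that $w$ is the \emph{unique} henselian extension of $v$ — so the interpreting data and the parameter of the $L$-formula may be bound by existential quantifiers, uniformly — turns this into a parameter-free $\mathcal{L}_{ring}$-formula for $\mathcal{O}_v$ over $K$.

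The main obstacle is the technical core of (1): turning ``$v(x)\ge 0$'' into a genuinely bounded first-order statement over $K^{\times p}$ (or $\wp(K)$) with controlled parameters. Here $p$-regularity is doing the essential work twice over — it provides the archimedean model-theoretic simplicity that keeps the formula finite, and it rules out the formula ``leaking'' onto a proper coarsening of $v$ (for a non-$p$-regular value group the same construction defines only a coarsening, which is precisely the mechanism the rest of the paper relies on). Two further, more routine, sources of friction are the defect and wild-ramification bookkeeping when $\mathrm{char}(Kv)=p$, and, for (2), checking that the descent through $L/K$ can genuinely be made parameter-free.
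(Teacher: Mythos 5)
This theorem is quoted from Hong (\cite[Theorems 3 and 4]{Hon14}); the present paper gives no proof of it, so your proposal can only be judged against Hong's argument and on internal correctness. Your inventory of ingredients is the right one: $p$-henselianity forcing $1+\mathfrak{m}_v\subseteq K^{\times p}$ (resp.\ $\mathfrak{m}_v\subseteq\wp(K)$ in characteristic $p$), a parameter $t$ with $v(t)\notin pvK$ witnessing non-$p$-divisibility, $p$-regularity to stop the construction from producing only a proper coarsening, and, for (2), existential quantification over an $\emptyset$-definable set of good parameters (exactly the device used for the set $X$ in the proof of Proposition~\ref{antireg}).

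The central mechanism you propose for part (1), however, does not work as stated. You justify the finiteness of your formula by asserting that a $p$-regular group is ``up to elementary equivalence, archimedean'' and that therefore only finitely many ``digits'' in the cosets $t^iK^{\times p}$, $0\le i<p$, are needed. Elementary equivalence to an archimedean group characterises \emph{regular} groups, not $p$-regular ones; and, more seriously, even an archimedean group $\Gamma$ can have $[\Gamma:p\Gamma]$ infinite (take a subgroup of $\mathbb{R}$ of infinite rank), so $vK/pvK$ need not be generated by the class of $v(t)$ and no finite Boolean combination of conditions $x\in t^iK^{\times p}$ can cut out $\mathcal{O}_v$. What is actually needed (and what Hong does, building on the Arason--Elman--Jacob machinery recalled in Section~\ref{sec:AEJ}) is a sandwich argument: in residue characteristic $\neq p$ the single definable set $S_t=\{a\in K^\times : 1+ta^p\in K^{\times p}\}$ satisfies $\mathcal{O}_v\setminus\{0\}\subseteq S_t\subseteq\{a: pv(a)+v(t)\ge 0\}$, the right-hand inclusion precisely because $v(t)\notin pvK$; one then extracts a definable valuation ring from a set squeezed between $\mathcal{O}_v$ and a bounded enlargement, and uses $p$-regularity to identify the result as $\mathcal{O}_v$ rather than a coarsening. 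No digit expansion occurs. Your part (2) is closer to viable, but ``the parameter may be bound by existential quantifiers, uniformly'' conceals the actual content: one must check that henselianity of a definable ring is an elementary condition and that $v$ is the \emph{unique} henselian valuation on $K$ with regular non-divisible value group (regularity makes every proper coarsening divisible and every proper refinement non-regular), so that all good parameters return the same ring; without this uniqueness the existential quantifier defines a union of valuation rings, not a valuation ring. As written, the proposal names the right landmarks but the route between them would not compile into a proof.
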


We can use this theorem to give an example of a field admitting both
definable and non-definable non-trivial henselian valuations.
\begin{Ex} \label{two}
Consider the field $K=\mathbb{R}((\mathbb{Q}))((\mathbb{Z}))$ 
(for details on power series fields see
\cite[\S 4.2]{Efr}). This field
admits exactly two non-trivial henselian valuations: The power series 
valuation $v_1$ with residue field $\mathbb{R}((\mathbb{Q}))$ and value group
$\mathbb{Z}$ is henselian and has no non-trivial coarsenings as its value
group has (archimedean) rank $1$. 
Furthermore, as $\mathbb{R}$ is non-henselian, 
the power series valuation $u$ 
with value group $\mathbb{Q}$ and residue field $\mathbb{R}$ is the only 
non-trivial henselian valuation on the field $\mathbb{R}((\mathbb{Q}))$.
Thus, $v_1$ has exactly one henselian refinement $v_2$, namely the refinement
of $v_1$ by $u$, with value group 
$\mathbb{Z}\oplus \mathbb{Q}$ (ordered lexicographically) and residue field
$\mathbb{R}$.

As $v_1K$ is regular and non-divisible, $v_1$ is $\emptyset$-definable by 
Theorem \ref{Hong}. We claim that $v_2$ is not $\emptyset$-definable:
Note that we have 
$\mathbb{R}\equiv \mathbb{R}((\mathbb{Q}))$ in $\mathcal{L}_{ring}$
since $\mathbb{R}((\mathbb{Q}))$ is also real closed
(see \cite[Lemma 4.3.6 and Theorem 4.3.7]{EP05}). 
Furthermore, 
there is an elementary equivalence of lexicographically ordered sums 
$\mathbb{Z}\oplus \mathbb{Q}\oplus\mathbb{Q}\equiv\mathbb{Z} \oplus\mathbb{Q}$
in $\mathcal{L}_{oag} = \{+,<,0\}$
since finite lexicographic sums
preserve elementary equivalence (\cite[proof of Theorem 3.3]{Gir88}) and the 
$\mathcal{L}_{oag}$-theory 
of divisible ordered abelian groups is complete (\cite[Corollary 3.1.17]{Mar}).
The Ax-Kochen/Ersov Theorem (\cite[Theorem 4.6.4]{PD}) implies that 
$$(K,v_2) 
\equiv (\mathbb{R}((\mathbb{Q}))\underbrace{((\mathbb{Q}))((\mathbb{Z}))}_{w_1},
w_1)
\equiv (\mathbb{R}
\underbrace{((\mathbb{Q}))((\mathbb{Q}))((\mathbb{Z}))}_{w_2},w_2)
$$ 
holds.
Thus, $v_2$ cannot be $\emptyset$-definable: Any parameter-free
first-order definition of $v_2$ would have to define both $w_1$ and $w_2$
on the field $\mathbb{R}((\mathbb{Q}))((\mathbb{Q}))((\mathbb{Z}))$.

Moreover, $v_2$ is not even definable with parameters: 
By \cite[Theorem 4.4 and Remark 3 on p.~1147]{DF96}, on any field $K$ the only
possible definable henselian valuation with real closed residue field
is the coarsest such. As $v_1$ is a proper coarsening
of $v_2$ with real closed residue
field, $v_2$ is not definable.
\end{Ex}

We now define an antipodal property to $p$-regularity.
\begin{Def}
Let $\Gamma$ be an ordered abelian group and $p$ a prime. 
Then, $\Gamma$ is \emph{$p$-antiregular} 
if no non-trivial quotient of $\Gamma$ is
$p$-divisible and $\Gamma$ has no rank-$1$ quotient.
Furthermore, $\Gamma$ is \emph{antiregular} if it is
$p$-antiregular for all primes $p$.
\end{Def}
Here, an ordered abelian group has rank $1$ if its
archimedean rank is $1$. 
Again, $p$-antiregularity is an elementary property of $\Gamma$:
$$\Gamma\textrm{ is } p\textrm{-antiregular }\Longleftrightarrow\;
\forall \gamma\, \exists \delta\, \forall \varepsilon\,
(|\varepsilon| \leq p|\gamma| \rightarrow \delta + \varepsilon 
\notin p \Gamma)$$
with the standard notation $|\gamma| := \mathrm{max}\{\gamma, {-\gamma}\}$.

\begin{Ex} \emph{Antiregular ordered abelian groups:}
For $i \in \mathbb{Z}$, let $Z_i$ be a copy of $\mathbb{Z}$ as
an ordered abelian group.
Consider the lexicographically ordered sums 
$$\Gamma := \bigoplus_{i \in \mathbb{Z}}Z_i \textrm{ and }
\Delta := \bigoplus_{i \in \mathbb{Z},\; i \leq 0}Z_i. $$
Then both $\Gamma$ and $\Delta$ are antiregular, as all of their
non-trivial 
quotients are either isomorphic to $\Gamma$ or $\Delta$, so in particular no
quotient is $p$-divisible
for any prime $p$ nor of rank $1$. 
The element $(\dots,0,0,0,1) \in \Delta$ is a minimal positive element, and
$\Gamma$ has no minimal positive element. Thus, we have 
$$\Gamma \not\equiv \Delta$$
as ordered abelian groups.
Note that any ordered abelian group which has an antiregular quotient is
again antiregular, so there are many examples of elementary classes of
antiregular ordered abelian groups.
\end{Ex}

\begin{Q} Is there a similar (first-order) 
classification for antiregular ordered
abelian groups as there is for regular ones? 
\end{Q}

We now collect some useful facts about antiregular ordered abelian groups.
\begin{Lem} \label{infty}
Let $\Gamma \neq \{0\}$ be an ordered abelian group. 
\begin{enumerate}
\item If $\Gamma$ is
$p$-antiregular, then we have $[\Gamma:p\Gamma]=\infty$.
\item If $\Gamma \leq \Gamma'$ and the index $[\Gamma':\Gamma]$ is finite,
then $\Gamma$ is $p$-antiregular if and only if $\Gamma'$ is 
$p$-antiregular.
\end{enumerate}
\end{Lem}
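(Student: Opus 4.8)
The plan is to translate everything into the standard dictionary between convex subgroups and quotients. First I would record three elementary facts: (a) the convex subgroups of an ordered abelian group form a chain under inclusion; (b) if $C$ is a convex subgroup of $\Gamma$, then $pC = p\Gamma \cap C$ (by convexity: $|\gamma| \le p|\gamma|$ forces $p\gamma \in C \Rightarrow \gamma \in C$), so the exact sequence
\[
0 \longrightarrow C/pC \longrightarrow \Gamma/p\Gamma \longrightarrow (\Gamma/C)/p(\Gamma/C) \longrightarrow 0
\]
yields $[\Gamma : p\Gamma] = [C : pC]\cdot[\Gamma/C : p(\Gamma/C)]$; and (c) $\Gamma/C$ has rank $1$ exactly when $C$ is a maximal proper convex subgroup of $\Gamma$. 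Using (c), $p$-antiregularity of $\Gamma$ is equivalent to the conjunction of (i) $\Gamma/C$ is not $p$-divisible for every convex subgroup $C \subsetneq \Gamma$, and (ii) the chain of convex subgroups of $\Gamma$ has no maximal proper element. In particular a quotient of a $p$-antiregular group by a convex subgroup is again $p$-antiregular, and a nontrivial $p$-antiregular group has a nonzero proper convex subgroup.

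For part (1), the key point — and the step I expect to be the main obstacle — is that a nontrivial $p$-antiregular $\Gamma$ has a nonzero proper convex subgroup which is \emph{itself} not $p$-divisible. This is not automatic, since a $p$-antiregular group may well contain nonzero $p$-divisible convex subgroups. I would argue by contradiction: if every proper convex subgroup of $\Gamma$ were $p$-divisible, let $M$ be their union; being a union of a chain of convex subgroups, $M$ is a convex subgroup, and being a union of a chain of $p$-divisible groups it is $p$-divisible. If $M = \Gamma$ this contradicts (i) (with $C=\{0\}$); if $M \subsetneq \Gamma$ then $M$ contains every proper convex subgroup and so is a maximal proper convex subgroup, contradicting (ii).

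Granting this fact, choose such a $C_1$; then $\Gamma/C_1$ is again nontrivial and $p$-antiregular, so the fact applies inside it, and lifting back one builds a strictly increasing chain $C_1 \subsetneq C_2 \subsetneq \cdots$ of proper convex subgroups with $C_1$ not $p$-divisible and each $C_{i+1}/C_i$ not $p$-divisible. By fact (b), $[\Gamma : p\Gamma] \ge [C_k : pC_k] = [C_1 : pC_1]\cdot\prod_{i=1}^{k-1}[C_{i+1}/C_i : p(C_{i+1}/C_i)] \ge p^k$ for every $k$, hence $[\Gamma : p\Gamma] = \infty$. (Here $\Gamma \neq \{0\}$ is used to start the chain, which is exactly why the hypothesis is needed.)

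For part (2), write $N := [\Gamma' : \Gamma] < \infty$, so that $\Gamma$ is cofinal in $\Gamma'$ and $N\Gamma' \subseteq \Gamma$. I would check that $C' \mapsto C' \cap \Gamma$ is an order isomorphism from the chain of convex subgroups of $\Gamma'$ onto that of $\Gamma$, with inverse ``take the convex hull in $\Gamma'$'' (injectivity uses cofinality of $\Gamma$); it matches proper subgroups with proper subgroups, and for corresponding $C' \leftrightarrow C$ one has $\Gamma/C \cong (\Gamma + C')/C' \le \Gamma'/C'$ of index dividing $N$. Condition (ii) then transfers immediately, as it depends only on the order type of the chain. For condition (i) it suffices to observe that a finite-index subgroup $H_0$ of an ordered abelian group $H$ is $p$-divisible iff $H$ is: if $H_0$ is $p$-divisible then (torsion-freeness of $H$) $H/H_0$ has no $p$-torsion, so $p \nmid [H:H_0]$, and one divides by $p$ first modulo $H_0$ and then inside $H_0$; conversely $H/H_0$ is a finite $p$-divisible quotient of $H$, hence of order prime to $p$, and the division pulls back. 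Applying this to $\Gamma/C \le \Gamma'/C'$ shows (i) holds for $\Gamma$ if and only if it holds for $\Gamma'$, which together with the transfer of (ii) gives the stated equivalence. Apart from the highlighted existence step in (1), everything is routine bookkeeping with this dictionary.
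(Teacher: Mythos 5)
Your proof is correct, and the two parts sit differently relative to the paper. Part (2) is essentially the paper's own argument: both rest on the bijection $C' \mapsto C' \cap \Gamma$ between the chains of convex subgroups of $\Gamma'$ and of $\Gamma$, together with the observation that corresponding quotients differ only by a finite index, so that $p$-divisibility and having rank $1$ transfer; you simply make explicit the finite-index transfer of $p$-divisibility (using $p \nmid [H:H_0]$) that the paper leaves implicit. Part (1), however, takes a genuinely different route. The paper argues contrapositively: if $[\Gamma:p\Gamma]=n<\infty$, the convex subgroup $\Delta$ generated by a set of representatives of $\Gamma/p\Gamma$ satisfies $\Gamma=\Delta+p\Gamma$, so $\Gamma/\Delta$ is $p$-divisible, and this quotient is either non-trivial (violating the first clause of $p$-antiregularity) or trivial, in which case $\Gamma$ is generated as a convex subgroup by a single element and hence has a rank-$1$ quotient (violating the second clause). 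You instead prove directly that a non-trivial $p$-antiregular group must contain a proper convex subgroup that is not $p$-divisible (your union-of-the-chain argument for this is sound), iterate in successive quotients to get a tower $C_1\subsetneq C_2\subsetneq\cdots$, and use the multiplicativity $[\Gamma:p\Gamma]=[C:pC]\cdot[\Gamma/C:p(\Gamma/C)]$ along convex subgroups to force $[\Gamma:p\Gamma]\ge p^k$ for every $k$. The paper's argument is shorter, needing only one well-chosen convex subgroup; yours is longer but buys an explicit infinite witness for the index and the intermediate fact that a non-trivial $p$-antiregular group has a non-$p$-divisible proper convex subgroup. Both are complete proofs.
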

\begin{proof}
\begin{enumerate}
\item If $[\Gamma:p\Gamma]=n$, let $\{x_1,\dotsc x_n\}$ be 
a system of representatives for $\Gamma/p\Gamma$. Consider the convex
subgroup $\Delta \leq \Gamma$ generated by $\{x_1,\dotsc, x_n\}$. 
Then, the quotient $\Gamma/\Delta$ is $p$-divisible. If the quotient is 
trivial, then
$\Gamma$ has finite archimedean rank (and hence a rank-$1$ quotient), 
otherwise $\Gamma$ has a non-trivial $p$-divisible
quotient. In either case, $\Gamma$ is not $p$-antiregular.
\item Consider ordered abelian groups $\Gamma \leq \Gamma'$ with
$[\Gamma':\Gamma]$ finite. Then, there is a one-to-one
correspondence between
convex subgroups $\Delta'$ of $\Gamma'$ and convex subgroups $\Delta$ of
$\Gamma$ with $\Delta'\cap \Gamma = \Delta$ and furthermore
$[\Delta':\Delta]$ finite. 
In particular, $\Gamma'/\Delta'$ is $p$-divisible if and only if 
$\Gamma/\Delta$ is
$p$-divisible and $\Gamma'/\Delta'$ has rank $1$ if and only if 
$\Gamma/\Delta$ has rank $1$.
Thus, $\Gamma'$ is $p$-antiregular if and only if $\Gamma$ is.
\end{enumerate}
\end{proof}

The next lemma gives the means to define a coarsening of a $\emptyset$-definable
valuation with non-antiregular value group without parameters. 
\begin{Lem} \label{Delta}
Let $\Gamma$ be an ordered abelian group and $p$ prime. 
Define $$D :=\Set{ \Delta \leq \Gamma | \Delta \textrm{ convex and }
\Gamma/\Delta \textrm{ is }p\textrm{-divisible}}.$$
Then, we have:
\begin{enumerate}
\item $\Delta_0 := \bigcap\limits_{\Delta \in D} \Delta$ is a convex subgroup
of $\Gamma$ such that $\Gamma/\Delta_0$ is $p$-regular.
\item If $\Gamma \neq p\Gamma$ holds and every $p$-regular quotient
of $\Gamma$ is $p$-divisible then
$\Delta_0$ is $\emptyset$-definable: For any $\gamma \in \Gamma$, we have
$$\gamma \in \Delta_0 \Longleftrightarrow
\exists \varepsilon \forall \alpha\, (|\alpha|< |\gamma| \rightarrow
\varepsilon-\alpha \notin p\Gamma).$$
\end{enumerate}
\end{Lem}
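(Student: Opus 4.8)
The plan is to analyze the intersection $\Delta_0$ by first understanding $D$ as a lattice of convex subgroups, then matching the set-theoretic intersection against the first-order formula.

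For part (1), I would first note that $\Gamma$ itself lies in $D$ (as $\Gamma/\Gamma = \{0\}$ is trivially $p$-divisible), so $D \neq \emptyset$. The key structural fact is that $D$ is closed under intersection: if $\Delta_1, \Delta_2 \in D$, then the convex subgroup $\Delta_1 \cap \Delta_2$ is again convex, and $\Gamma/(\Delta_1 \cap \Delta_2)$ embeds (as an ordered group) into the fibre product $\Gamma/\Delta_1 \times \Gamma/\Delta_2$; since each factor is $p$-divisible, so is a subgroup of the product—wait, subgroups of $p$-divisible groups need not be $p$-divisible, so I instead argue directly: given $\gamma \in \Gamma$, find $\delta_i$ with $\gamma \equiv p\delta_i \pmod{\Delta_i}$; since the convex subgroups of $\Gamma$ are totally ordered by inclusion (a standard fact: convex subgroups of an ordered abelian group form a chain), say $\Delta_1 \subseteq \Delta_2$, then $\Delta_1 \cap \Delta_2 = \Delta_1$ and there is nothing to prove. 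More generally this chain property shows that $D$ is itself a chain, and $\Delta_0 = \bigcap_{\Delta \in D}\Delta$ is again convex (an intersection of a chain of convex subgroups is convex). It remains to check $\Gamma/\Delta_0$ is $p$-regular, i.e. every proper quotient of $\Gamma/\Delta_0$ is $p$-divisible. A proper quotient of $\Gamma/\Delta_0$ has the form $\Gamma/\Delta'$ for some convex $\Delta' \supsetneq \Delta_0$. Since $D$ is a chain with intersection $\Delta_0$ and $\Delta' \supsetneq \Delta_0$, some $\Delta \in D$ satisfies $\Delta \subseteq \Delta'$ (otherwise every element of $D$ properly contains $\Delta'$, forcing $\Delta_0 \supseteq \Delta'$, contradiction); then $\Gamma/\Delta'$ is a quotient of the $p$-divisible group $\Gamma/\Delta$, hence $p$-divisible. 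Thus $\Gamma/\Delta_0$ is $p$-regular.

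For part (2), the hypothesis "$\Gamma \neq p\Gamma$ and every $p$-regular quotient of $\Gamma$ is $p$-divisible" forces $\Gamma/\Delta_0$ to be $p$-divisible, hence $\Delta_0 \in D$, so $\Delta_0$ is the \emph{smallest} element of $D$—and it is proper, since $\Gamma \notin$ would require $\Gamma$ $p$-divisible. Now I must show, for $\gamma \in \Gamma$, that $\gamma \in \Delta_0$ if and only if there exists $\varepsilon$ such that for all $\alpha$ with $|\alpha| < |\gamma|$ we have $\varepsilon - \alpha \notin p\Gamma$. For the forward direction: if $\gamma \in \Delta_0$, consider the convex subgroup $[\gamma]$ generated by $\gamma$, which is contained in $\Delta_0$; since $\Gamma/\Delta_0$, and hence $\Gamma/[\gamma]$... no—rather, I want an $\varepsilon$ whose class mod $[\gamma]$ avoids $p(\Gamma/[\gamma])$, i.e. an element witnessing non-$p$-divisibility of the quotient $\Gamma/[\gamma]$. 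Such $\varepsilon$ exists provided $\Gamma/[\gamma] \neq p(\Gamma/[\gamma])$; this follows because $\Delta_0$ is the smallest element of $D$ and $[\gamma] \subseteq \Delta_0$ is a proper convex subgroup of the $p$-regular group... here I need $[\gamma] \subsetneq \Gamma$, which holds unless $\gamma = 0$ (and $\gamma = 0$ is trivially in $\Delta_0$). The condition $|\alpha| < |\gamma| \Rightarrow \varepsilon - \alpha \notin p\Gamma$ says precisely that $\varepsilon + [\gamma]_{<} \subseteq \Gamma \setminus p\Gamma$ where $[\gamma]_<$ denotes elements of absolute value strictly below $|\gamma|$; since $[\gamma]_< $ generates $[\gamma]$, a short argument upgrades this to $\varepsilon + [\gamma] \subseteq \Gamma \setminus p\Gamma$, giving the required witness of non-$p$-divisibility of $\Gamma/[\gamma]$. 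Conversely, if such $\varepsilon$ exists but $\gamma \notin \Delta_0$, then $\Delta_0 \subsetneq [\gamma]$ (chain property again, since $\gamma \in [\gamma] \setminus \Delta_0$ forces $[\gamma] \not\subseteq \Delta_0$, hence $\Delta_0 \subsetneq [\gamma]$); then $\Gamma/[\gamma]$ is a \emph{proper} quotient of the $p$-regular $\Gamma/\Delta_0$, hence $p$-divisible, so $\varepsilon - 0 = \varepsilon \in p\Gamma + [\gamma]$, meaning $\varepsilon - \alpha \in p\Gamma$ for some $\alpha \in [\gamma]$; but we need such $\alpha$ with $|\alpha| < |\gamma|$, and this requires the remark that $[\gamma] = \{\,\beta : |\beta| < n|\gamma| \text{ for some } n\,\}$ combined with a $p$-divisibility manipulation (replace $\alpha$ by $\alpha - p\delta$ for suitable $\delta$) to bring the representative into the range $|\alpha| < |\gamma|$—this is the delicate point and the main obstacle.

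The main obstacle, then, is the careful back-and-forth between the convex subgroup $[\gamma]$ and its "small" generating set $\{\beta : |\beta| < |\gamma|\}$: both directions of the equivalence hinge on the fact that $p$-divisibility statements modulo $[\gamma]$ can be witnessed already by representatives of absolute value below $|\gamma|$, which uses the Archimedean description of $[\gamma]$ together with the freedom to subtract elements of $p\Gamma$. I would isolate this as a small sublemma. Everything else—the chain property of convex subgroups, closure of $D$ under the relevant operations, and the deduction that $\Gamma/\Delta_0$ is $p$-regular—is routine given the conventions fixed at the start of the section that all quotients are by convex subgroups.
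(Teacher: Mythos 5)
Part (1) of your proposal is correct and follows essentially the same route as the paper (chain property of convex subgroups, then every proper quotient of $\Gamma/\Delta_0$ is a quotient of some $p$-divisible $\Gamma/\Delta$ with $\Delta\in D$). In part (2), however, there are two genuine gaps, both traceable to your decision to work with the convex subgroup $\langle\gamma\rangle$ generated by $\gamma$ rather than with a convex subgroup \emph{not} containing $\gamma$.

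In the forward direction you need $\Gamma/\langle\gamma\rangle$ (equivalently $\Delta_0/\langle\gamma\rangle$) to be non-$p$-divisible in order to produce $\varepsilon$. Your justification --- minimality of $\Delta_0$ in $D$ together with ``here I need $\langle\gamma\rangle\subsetneq\Gamma$'' --- only works when $\langle\gamma\rangle\subsetneq\Delta_0$ (then $\langle\gamma\rangle\notin D$). The condition $\langle\gamma\rangle\subsetneq\Gamma$ is not the relevant one: in the remaining case $\langle\gamma\rangle=\Delta_0$, the quotient $\Gamma/\langle\gamma\rangle=\Gamma/\Delta_0$ \emph{is} $p$-divisible and no $\varepsilon$ with $\varepsilon\notin p\Gamma+\langle\gamma\rangle$ exists, so your argument stalls exactly where the hypothesis must be used a second time. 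The paper excludes this case by passing to $B_\gamma$, the largest convex subgroup of $\Delta_0$ not containing $\gamma$: if $\Delta_0=\langle\gamma\rangle$, then $\Gamma/B_\gamma$ is a $p$-regular quotient of $\Gamma$ that is not $p$-divisible (its bottom convex subgroup $\Delta_0/B_\gamma$ has rank $1$ and is not $p$-divisible), contradicting the assumption that every $p$-regular quotient of $\Gamma$ is $p$-divisible. Without some such step the forward implication is unproved.

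In the converse direction you reduce to ``$\varepsilon-\alpha\in p\Gamma$ for some $\alpha\in\langle\gamma\rangle$'' and then need $|\alpha|<|\gamma|$; you flag this yourself as the main unresolved obstacle, and indeed it does not follow in general, since $\langle\gamma\rangle$ contains elements (e.g.\ $\gamma$ itself) of absolute value at least $|\gamma|$, and there is no general way to shift a representative into the range $|\alpha|<|\gamma|$ modulo $p\Gamma$. The paper's proof avoids the issue entirely: if $\gamma\notin\Delta_0$, pick $\Delta\in D$ with $\gamma\notin\Delta$ (under the hypotheses of (2) one may take $\Delta=\Delta_0$, since $\Gamma/\Delta_0$ is $p$-regular and hence $p$-divisible, so $\Delta_0\in D$); then $p$-divisibility of $\Gamma/\Delta$ yields, for every $\varepsilon$, some $\alpha\in\Delta$ with $\varepsilon-\alpha\in p\Gamma$, and convexity of $\Delta$ together with $\gamma\notin\Delta$ gives $|\alpha|<|\gamma|$ for free. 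Replacing $\langle\gamma\rangle$ by such a $\Delta$ dissolves your ``delicate point''; as written, both halves of the equivalence in (2) are incomplete.
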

\begin{proof}
\begin{enumerate}
\item Since all convex subgroups of $\Gamma$ are linearly ordered by inclusion,
it is clear that $\Delta_0$ is a convex subgroup of $\Gamma$. Every non-trivial
convex subgroup of $\Gamma/\Delta_0$ is of the shape $\Delta/\Delta_0$ for
some $\Delta \in D$. Hence, 
$$(\Gamma/\Delta_0) \big/ (\Delta/\Delta_0) \cong \Gamma/\Delta$$
is $p$-divisible and so $\Gamma/\Delta_0$ is $p$-regular.
\item As all $p$-regular quotients of
$\Gamma$ are $p$-divisible by assumption, 
$\Gamma/\Delta_0$ is $p$-divisible. 
Assume $\gamma \in \Delta_0$. Let $\langle \gamma \rangle$ be
the convex hull of the subgroup generated by $\gamma$ in $\Delta_0$.

We claim that $\Delta_0 / \langle \gamma \rangle$ is not $p$-divisible.
Assume for a contradiction that $\Delta_0 / \langle \gamma \rangle$
was $p$-divisible. Then, as $\Gamma/\Delta_0$ is $p$-divisible,
we also get that $\Gamma/\langle \gamma \rangle$ is $p$-divisible.
This implies $\langle \gamma \rangle \in D$ and hence 
$\Delta_0=\langle \gamma \rangle$. In particular, as $\Delta_0$ is not 
$p$-divisible, we get $p \nmid \gamma$. 
Consider the maximal convex subgroup
$B_\gamma$ of $\Delta_0$ such that $\gamma \notin B_\gamma$, i.e.
$$B_\gamma:=\Set{ \delta \in \Delta_0 | \forall n \in \mathbb{Z}:\,
|n\cdot \delta|<\gamma }.$$
Now, $\Delta_0/B_\gamma$ is a non-$p$-divisible rank-$1$ quotient
of $\Delta_0$ and thus of $\Gamma$. Hence, $\Gamma$ has a non-$p$-divisible
$p$-regular quotient, contradicting our assumption on $\Gamma$ that no such
exists. This proves the claim.

By the claim, we can choose some $\varepsilon \in \Delta_0 \setminus
\langle \gamma \rangle$  such that
$$\varepsilon + \langle \gamma \rangle \notin p \big(\Delta_0/
\langle \gamma \rangle \big)$$
holds. Hence, for any $\alpha \in \langle \gamma \rangle$, we have
$$\varepsilon-\alpha  \notin p\Delta_0 = p\Gamma \cap \Delta_0.$$
Thus, we have for all $\gamma \in \Delta_0$ 
$$\Gamma \models 
\exists \varepsilon \forall \alpha\, (|\alpha|< |\gamma| \rightarrow
\varepsilon-\alpha \notin p\Gamma).$$
Conversely, if $\gamma \notin \Delta_0$ holds then we have $\gamma \notin
\Delta$ for some $\Delta \in D$. As $\Gamma/\Delta$ is $p$-divisible, 
for every
$ \varepsilon \in \Gamma$ there is some $\alpha \in \Delta$ such that
$\varepsilon - \alpha \in p\Gamma$ holds. Thus, we have
for all $\gamma \in \Gamma \setminus \Delta_0$
$$\Gamma \models 
\forall \varepsilon \exists \alpha\, (|\alpha|< |\gamma| \wedge 
\varepsilon-\alpha \in p\Gamma).$$
\end{enumerate}
\end{proof}

\begin{Rem}
Let $\Gamma$ be an ordered abelian group and $p$ a prime
as in the assumptions of Lemma \ref{Delta}(2), i.e.,
assume that $\Gamma\neq p\Gamma$ holds
and that every $p$-regular quotient of $\Gamma$
is $p$-divisible. Define $\Delta_0$ as before. 
An alternative way to show that 
$\Delta_0$ is $\emptyset$-definable is to check that one has
$$\Delta_0=\bigcup_{\alpha \in \mathcal{S}_p} \Gamma_\alpha,$$
for $\mathcal{S}_p$ and $\Gamma_\alpha$ as defined in 
\cite[Definition 1.1]{Immi}.
\end{Rem}

We can now prove our first result on defining henselian valuations
without parameters.
\begin{Prop} \label{antireg}
Let $(K,v)$ be a henselian field and $p$ a prime. 
If the value group $vK$ 
is not $p$-divisible and not $p$-antiregular, then
some non-trivial (henselian) coarsening of $v$ is
$\emptyset$-definable on $K$.
\end{Prop}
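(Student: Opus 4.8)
The plan is to use Lemma \ref{Delta} to extract a $\emptyset$-definable convex subgroup of $vK$ and then transport it to a coarsening of $v$. First I would check the hypotheses of Lemma \ref{Delta}(2): we are given $vK \neq p(vK)$, so the first assumption holds; and we must verify that every $p$-regular quotient of $vK$ is $p$-divisible. If $vK$ had a $p$-regular quotient $\Gamma/\Delta$ that was not $p$-divisible, then (being $p$-regular) all its proper quotients would be $p$-divisible, yet $\Gamma/\Delta$ itself is a non-$p$-divisible quotient of $vK$; moreover a $p$-regular non-$p$-divisible group has a rank-$1$ quotient (this is essentially the classification of $p$-regular groups, or one can argue directly: take the smallest convex $\Delta'/\Delta$ with $(\Gamma/\Delta)/(\Delta'/\Delta)$ $p$-divisible, then $\Delta'/\Delta$ is non-$p$-divisible of rank $1$). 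Either way $vK$ would have a non-$p$-divisible quotient of rank $\leq 1$ that is not $p$-divisible, contradicting $p$-antiregularity — wait, $p$-antiregularity is exactly what is being \emph{negated} in the hypothesis. So this is the crux: I must use the full strength of "not $p$-antiregular" together with "not $p$-divisible" to rule out bad $p$-regular quotients.

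Let me reorganize. Recall $vK$ is \emph{not} $p$-antiregular, meaning either (a) $vK$ has a non-trivial $p$-divisible quotient, or (b) $vK$ has a rank-$1$ quotient. In case (b), a rank-$1$ quotient corresponds to a $\emptyset$-definable-in-$vK$... no, rank-$1$ quotients need not be $\emptyset$-definable on the nose, but here is the key point: if $vK$ has a rank-$1$ quotient $\Gamma/\Delta$, that quotient is either $p$-divisible (then we are in a good case to define $\Delta_0$ as below, or we can coarsen directly) or non-$p$-divisible rank $1$, and a non-$p$-divisible rank-$1$ ordered abelian group is $p$-regular (vacuously, its only proper quotient is trivial) — so $vK$ would have a non-$p$-divisible $p$-regular quotient. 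Hmm, so to apply Lemma \ref{Delta}(2) cleanly I actually want to be in the case where $vK$ has \emph{no} non-$p$-divisible $p$-regular quotient. Therefore the real structure of the proof is a case split:

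\textbf{Case 1: $vK$ has a non-$p$-divisible $p$-regular quotient $vK/\Delta$.} Then $v/\Delta$ — the coarsening of $v$ whose value group is $vK/\Delta$ — is a henselian valuation (coarsenings of henselian valuations are henselian) with $p$-regular non-$p$-divisible value group. If additionally $\zeta_p \in Kv$-type hypotheses held we could invoke Theorem \ref{Hong}, but more robustly: since $vK \neq p(vK)$ and we need a \emph{non-trivial} coarsening that is $\emptyset$-definable, I would instead argue that the coarsening by $\Delta$ is proper (if $\Delta = \{0\}$ then $vK$ itself is $p$-regular non-$p$-divisible, and Theorem \ref{Hong}(2) applies to $v$ directly after passing to a finite extension if needed — actually Hong's result handles exactly this). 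The cleanest route: apply Hong's Theorem \ref{Hong}(2) to the henselian valuation with value group the $p$-regular non-$p$-divisible quotient; its $\emptyset$-definability gives the desired coarsening. I expect this case to require a little care about whether $vK/\Delta$ is \emph{divisible} versus merely $p$-divisible-failing, and about trivial vs. non-trivial coarsenings.

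\textbf{Case 2: $vK$ has no non-$p$-divisible $p$-regular quotient.} Since $vK$ is not $p$-antiregular, it must then have a non-trivial $p$-divisible quotient (the rank-$1$-quotient alternative is excluded because a non-$p$-divisible rank-$1$ quotient would be $p$-regular, and a $p$-divisible rank-$1$ quotient falls under what follows). So $D$ from Lemma \ref{Delta} contains a proper convex subgroup, hence $\Delta_0 \subsetneq vK$. The hypotheses of Lemma \ref{Delta}(2) are met ($vK \neq p(vK)$; every $p$-regular quotient is $p$-divisible by the Case 2 assumption), so $\Delta_0$ is $\emptyset$-definable as a subgroup of $vK$. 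Now $\Delta_0 \neq \{0\}$: indeed $vK/\Delta_0$ is $p$-divisible (shown in the proof of Lemma \ref{Delta}(2)), while $vK$ is not $p$-divisible, so $\Delta_0 \neq \{0\}$. Let $w$ be the coarsening of $v$ corresponding to the convex subgroup $\Delta_0$, i.e.\ $\mathcal{O}_w = \{x \in K : v(x) \geq \delta \text{ for some } \delta \in \Delta_0\} \cup \{0\}$; equivalently $w$ has value group $vK/\Delta_0$. Then $w$ is a non-trivial ($\Delta_0 \neq vK$) proper coarsening of $v$, it is henselian (coarsening of henselian is henselian), and it is $\emptyset$-definable: a positive element $x$ lies in $\mathcal{O}_w \setminus \mathcal{O}_v^\times$ iff $v(x) \notin \Delta_0$ and $v(x) > 0$... more precisely one writes $\mathcal{O}_w = \{x : v(x) \in (\Delta_0)_{\geq 0} \cup (vK \setminus \Delta_0)_{>0}\}$, wait I should just say: since $\Delta_0$ is $\emptyset$-definable in the ordered group $vK$ and $v$'s value group with its order is (uniformly) interpretable from $\mathcal{O}_v$, and $\mathcal{O}_v$ itself need not be definable — so I cannot quite phrase it that way. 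Instead, the standard move: $\mathcal{O}_w$ is $\emptyset$-definable in $K$ because the defining formula is "$x \in \mathcal{O}_w \iff$ for all $\delta$ in the $\emptyset$-definable subgroup $\Delta_0$ (pulled back), ...". The honest statement is that $\Delta_0$ being $\emptyset$-definable in $vK$ translates, via the formula of Lemma \ref{Delta}(2) applied pointwise to $v(x)$, into $\mathcal{O}_w$ being definable \emph{from $\mathcal{O}_v$}; but since here $v = v_K^p$ on a suitable finite extension is $\emptyset$-definable (by Theorem \ref{MT}, which is how this Proposition gets used in the paper), or more to the point the Proposition is stated for a henselian $(K,v)$ and we only conclude "some coarsening is $\emptyset$-definable" — re-reading, the Proposition does \emph{not} assume $v$ is definable! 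So the conclusion "$\emptyset$-definable coarsening of $v$" must mean: there is a $\emptyset$-definable henselian valuation $w$ on $K$ which happens to be a coarsening of $v$. The mechanism must be: reduce to the canonical $p$-henselian valuation $v_K^p$ (which is $\leq v$ up to comparability issues, and $\emptyset$-definable after passing to a $\emptyset$-interpretable finite extension), apply the above to $v_K^p$ whose value group shares the relevant (anti)regularity with $vK$ up to finite index — here Lemma \ref{infty}(2) is exactly the tool ensuring $p$-antiregularity is a finite-index invariant, and the canonical $p$-henselian and the given henselian valuation have comparable value groups with the quotient being controlled.

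The main obstacle, then, is \textbf{bridging from the given henselian $v$ (not assumed definable) to a definable valuation}: one must invoke the canonical $p$-henselian valuation $v_K^p$ and Theorem \ref{MT}, possibly passing to a finite ($\emptyset$-interpretable) Galois extension to arrange $\zeta_p \in K$, then check that the value group $v_K^pK$ inherits "not $p$-divisible" and "not $p$-antiregular" from $vK$. Since $v_K^p$ and $v$ are comparable and every henselian valuation is $p$-henselian, $v_K^p$ is a coarsening of $v$ (when $v_K^p$ is non-henselian) with $v_K^pK$ a quotient of $vK$ by a convex subgroup — but then $p$-antiregularity could be gained in passing to a quotient (quotients of antiregular are antiregular, but we need the reverse direction). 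This is the delicate point and where I expect the argument to need the finer analysis of how $v$ and $v_K^p$ relate, likely using that their residue fields' $(p)$-henselianity forces the quotient $vK \to v_K^pK$ to be "small" in a suitable sense, or splitting into the sub-cases of whether $v_K^p$ is itself henselian. Once $v$ is replaced by a $\emptyset$-definable valuation with the same relevant value-group properties, the two-case argument above produces the required $\emptyset$-definable non-trivial henselian coarsening.
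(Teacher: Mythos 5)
Your overall architecture matches the paper's: reduce to $\zeta_p\in K$ via the $\emptyset$-interpretable extension $K(\zeta_p)$ (with Lemma \ref{infty} preserving the value-group hypotheses), split on whether $vK$ has a non-$p$-divisible $p$-regular quotient, and in the remaining case apply Lemma \ref{Delta} to the value group of the canonical $p$-henselian valuation. But there are two genuine gaps. The first is in your Case 1: you propose to invoke Theorem \ref{Hong}(2), but that part of Hong's theorem requires the value group to be \emph{regular} (i.e.\ $q$-regular for every prime $q$) and non-divisible, whereas a non-$p$-divisible $p$-regular group need not be regular; for instance $\mathbb{Z}_{(q)}\oplus\mathbb{Z}_{(p)}$, ordered lexicographically with the first factor most significant, is $p$-regular and not $p$-divisible, but its proper quotient $\mathbb{Z}_{(q)}$ is not $q$-divisible. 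The paper uses Theorem \ref{Hong}(2) only for the sub-case of a non-$p$-divisible rank-$1$ quotient (rank $1$ forces regularity); for a general non-$p$-divisible $p$-regular quotient it can only invoke Theorem \ref{Hong}(1), which yields a definition $\phi(x,t)$ \emph{with a parameter}. The missing idea is how to eliminate that parameter: the paper notes that $vK$ has at most one non-$p$-divisible $p$-regular quotient and that no proper refinement of $v$ has $p$-regular value group, so there is a unique $p$-henselian valuation on $K$ with non-$p$-divisible $p$-regular value group; since $p$-henselianity of a valuation ring is elementary, the set $X$ of parameters $t$ for which $\phi(K,t)$ is such a ring is $\emptyset$-definable, and $\exists t\in X\,(x\in\phi(K,t))$ defines the coarsening without parameters. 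Your write-up supplies nothing in place of this step, so Case 1 as you present it only produces a parameter-definable coarsening (or fails outright where you appeal to \ref{Hong}(2)).

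The second gap is the bridge you yourself flag as ``the delicate point'' in Case 2 but leave unresolved: $v$ is not assumed definable, so $\emptyset$-definability of $\Delta_0$ inside $vK$ does not by itself yield a $\emptyset$-definable subring of $K$. The paper's resolution is short: $v$ and $v_K^p$ are comparable; if $v_K^p$ coarsens $v$ you are done at once, since $v_K^p$ is $\emptyset$-definable by Theorem \ref{MT}; otherwise $v_K^p$ refines $v$, so $vK = \Gamma/H$ for $\Gamma := v_K^pK$ and some convex $H$, and the hypothesis ``every $p$-regular quotient is $p$-divisible'' lifts from $vK$ to $\Gamma$ (a $p$-regular quotient $\Gamma/\Delta$ with $H\subseteq\Delta$ is a $p$-regular quotient of $vK$, while $\Delta\subsetneq H$ would make $vK$ a proper quotient of the $p$-regular group $\Gamma/\Delta$ and hence $p$-divisible, a contradiction). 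Lemma \ref{Delta} then applies to $\Gamma$; the resulting coarsening $w$ of $v_K^p$ is non-trivial because $\Gamma$ inherits a proper $p$-divisible quotient from $vK$, and $w$ coarsens $v$ because $wK$ is $p$-divisible while $vK$ is not. You gesture at all of this but do not carry it out, and your worry that $p$-antiregularity might behave badly under the passage to $v_K^pK$ dissolves once the case split is organised, as in the paper, around $p$-regular quotients rather than around antiregularity itself.
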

\begin{proof} Assume that $vK$ is not $p$-divisible and not $p$-antiregular.
In case $\mathrm{char}(K)\neq p$, we may assume that $K$ contains a primitive 
$p$th root of unity $\zeta_p$: As $v$ is henselian, it extends uniquely to
a henselian valuation $w$ on
$F:=K(\zeta_p)$. By Lemma \ref{infty}, the value group $wF$ of the
prolongation is again non-$p$-divisible and not $p$-antiregular. As
$K(\zeta_p)$ is $\emptyset$-interpretable in $K$, any parameter-free
definition of a non-trivial coarsening of $w$ gives rise to a parameter-free
definition of a non-trivial coarsening of $v$.
In particular, the non-$p$-divisibility of $vK$ now implies $K \neq K(p)$.

If $vK$ admits a non-$p$-divisible rank-$1$ quotient, then
the corresponding coarsening is $\emptyset$-definable by Theorem \ref{Hong}.

Otherwise, $vK$ admits some non-trivial $p$-divisible quotient
by assumption.
If $vK$ admits a non-$p$-divisible $p$-regular quotient, then the 
corresponding coarsening is definable by Theorem \ref{Hong}, say via the
formula $\phi(x,t)$ for some parameter $t \in K$.
Note that $vK$ has at most one non-$p$-divisible $p$-regular quotient
and that no proper refinement of $v$ has $p$-regular value group. 
In particular, there is only one $p$-henselian valuation with 
non-$p$-divisible $p$-regular value group on $K$. 
By \cite[Theorem 1.5]{Koe95}, 
$p$-henselianity is an 
$\mathcal{L}_\mathrm{ring}$-elementary property of a valuation ring.
Thus,
the set
\begin{align*}
X = \big\{ \, t \in K \;|\; &
\mathcal{O}_{w_t}\coloneqq \phi(K, t) \textrm{ is a }
p\textrm{-henselian valuation ring }\\
&\textrm{with } w_tK\neq p\cdot w_tK \textrm{ and }
w_tK\textrm{ $p$-regular} \,\big\}
\end{align*}
is $\emptyset$-definable. Hence, the parameter-free formula
$$\psi(x) \equiv \exists t \in X \,(x \in \phi(K,t))$$
defines the unique $p$-henselian valuation with 
non-$p$-divisible $p$-regular value group on $K$ which is a non-trivial
coarsening of $v$.

Finally, assume that the value group $vK$ of $v$ only has 
$p$-divisible $p$-regular quotients; in particular, $vK$ is not
$p$-regular.
As $v$ is henselian, $v$ is comparable to the canonical $p$-henselian 
valuation $v_K^p$. In case $v_K^p$ is a coarsening of $v$, we have found
an $\emptyset$-definable coarsening of $v$.
Otherwise, the 
value group of the canonical
$p$-henselian valuation
$v_K^pK$ also admits only $p$-divisible $p$-regular quotients. Thus, Lemma
\ref{Delta} applies and $\Delta_0$ is $\emptyset$-definable in $v_K^pK$.
Now, the corresponding non-trivial 
$\emptyset$-definable coarsening $w$ of $v_K^p$ has $p$-divisible
value group and is hence also a coarsening of $v$. 

Note that any coarsening of a henselian valuation is again henselian. Thus, 
we have shown that if $v$ is henselian and 
$vK$ is non-$p$-divisible and not $p$-antiregular, then
some non-trivial, henselian coarsening of $v$ is $\emptyset$-definable.
\end{proof}

Next, we repeat the construction given in \cite{PZ78} of a field which is 
elementarily
equivalent in $\mathcal{L}_{ring}$ to a henselian field but which 
does not admit any non-trivial henselian valuation. Following
\cite{PZ78}, we define:
\begin{Def}
A field $K$ is called \emph{$t$-henselian} if there is some henselian field
$L$ with $L \equiv K$.
\end{Def}

Fields which are $t$-henselian but non-henselian play 
an important role in several of the examples in this paper:
Consider a field $K$ which is $t$-henselian but not henselian.
Clearly, no field elementarily equivalent to $K$ can admit a 
\emph{$\emptyset$-definable} non-trivial henselian valuation. 
However, for the field $K$ as discussed in the following example, 
any henselian field elementarily equivalent to $K$ admits a 
\emph{parameter-definable} henselian valuation (see Example \ref{PZ1}).
This follows from the fact 
that the canonical $2$-henselian valuation $v_K^2$
is $\emptyset$-definable and has an antiregular value group.
\begin{Ex} \emph{A $t$-henselian field which is not henselian:}
\label{PZ} Let 
$K_0:=\mathbb{Q}^{alg}$, and let $v_0$ be the trivial valuation on $K_0$.
For $n\geq 1$, one iteratively constructs valued fields $(K_n,v_n)$ with 
$v_nK_n = \mathbb{Z}$ and $K_nv_n = K_{n-1}$ and such that Hensel's Lemma
holds for polynomials of degree at most $n$ as follows:
Choose a minimal  algebraic extension $K_n$ of $K_{n-1}(X_{n-1})$ with
$$K_{n-1}(X_{n-1}) \subseteq K_n \subsetneq 
K_{n-1}((X_{n-1})),$$ 
such that Hensel's Lemma holds on $(K_n,v_n)$ for polynomials of degree at
most $n$, 
where $v_n$ is the restriction of the power series valuation on 
$K_{n-1}((X_{n-1}))$
to $K_n$. One can of course choose $K_1=K_0(X_0)$ with $v_1$ the $X_0$-adic
valuation, as Hensel's Lemma holds trivially for all polynomials of degree $1$.
Note that we get a place $p_n: K_n \to K_{n-1} \cup \{\infty\}$
with is $p$-henselian for all primes $p \leq n$.

The field $K$ is then taken as the inverse limit of 
$$(K_n \cup \infty, p_n) \textrm{ with projections }s_n:K \cup\{\infty\} \to
K_{n-1} \cup \{\infty\}.$$
It follows from the arguments given in \cite[p.~338]{PZ78} that $K$ admits
no non-trivial henselian valuation.

The canonical $2$-henselian valuation $v_K^2$ on $K$ now corresponds to the 
place $$s_2: K \to K_1 \cup \{\infty\}$$ 
as $p_n$ is $2$-henselian if and only if $n\geq 2$.
As usual, the quotients of $v_K^2K$ correspond to the value groups
of coarsenings of $v_K^2$.
Since the coarsenings of $v_K^2$ correspond to the places $s_n$ for $n \geq 2$
and none of them has a $p$-divisible value group for any prime $p$ or has 
a value group of rank-$1$, we conclude that 
the group $v_K^2K$ is antiregular.
\end{Ex}

\section{Defining coarsenings of valuations using subgroups} 
In this section, \label{sec:AEJ} 
we discuss a class of parameter-definable convex subgroups
of an ordered abelian group. The motivation for this comes from \cite{AEJ87}.
We then apply our construction to show that a field admitting
a henselian valuation with non-divisible value group admits
a non-trivial parameter-definable henselian valuation.
\begin{Lem} Let $\Gamma$ be an ordered abelian group and $p$ a prime. \label{Dg}
Take any $\gamma \in \Gamma$
with $\gamma>0$ and define 
$$\Delta_\gamma \coloneqq \Set{ \delta \in \Gamma | [0,p\cdot|\delta|] 
\subseteq [0,p\cdot \gamma]+p\Gamma}$$
where $|\delta|=\mathrm{max}\{\delta,-\delta\}$. Then
$\Delta_\gamma$ is a convex $\{\gamma\}$-definable subgroup of $\Gamma$ with 
$\gamma\in \Delta_\gamma$. Furthermore, no non-trivial convex subgroup of
$\Gamma/\Delta_\gamma$ is $p$-divisible.
\end{Lem}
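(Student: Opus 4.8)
The plan is to verify the three assertions about $\Delta_\gamma$ in order: it is a convex subgroup, it is $\{\gamma\}$-definable with $\gamma\in\Delta_\gamma$, and no non-trivial convex subgroup of $\Gamma/\Delta_\gamma$ is $p$-divisible. The notation $[a,b]$ should be read as the set of group elements lying in the closed interval, and $[0,p\gamma]+p\Gamma$ as the set of sums $x+p\eta$ with $0\le x\le p\gamma$ and $\eta\in\Gamma$. First I would record the reformulation that $\delta\in\Delta_\gamma$ iff for every $\alpha$ with $0\le\alpha\le p|\delta|$ there is $\beta\in\Gamma$ with $\alpha-p\beta\in[0,p\gamma]$; this is visibly a first-order condition on $\delta$ with the single parameter $\gamma$, giving $\{\gamma\}$-definability immediately. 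That $\gamma\in\Delta_\gamma$ is also immediate: for $\delta=\gamma$ the interval $[0,p|\delta|]=[0,p\gamma]$ is contained in $[0,p\gamma]+p\Gamma$ trivially (take $\beta=0$).

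Next I would check that $\Delta_\gamma$ is a convex subgroup. Convexity: if $\delta\in\Delta_\gamma$ and $0\le\delta'\le\delta$ (or symmetrically $-\delta\le\delta'\le 0$), then $[0,p|\delta'|]\subseteq[0,p|\delta|]\subseteq[0,p\gamma]+p\Gamma$, so $\delta'\in\Delta_\gamma$; together with the obvious symmetry $\delta\in\Delta_\gamma\iff-\delta\in\Delta_\gamma$ this gives convexity. Closure under the group operation is the one point that needs a small argument: given $\delta_1,\delta_2\in\Delta_\gamma$, I want $\delta_1+\delta_2\in\Delta_\gamma$, and by convexity it suffices to treat $\delta_1,\delta_2\ge 0$ and show $p(\delta_1+\delta_2)$, hence every element of $[0,p(\delta_1+\delta_2)]$, lies in $[0,p\gamma]+p\Gamma$. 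The idea is that membership in $[0,p\gamma]+p\Gamma$ only depends on the coset mod $p\Gamma$ together with being "small enough", and one can reduce an element $\alpha\in[0,p(\delta_1+\delta_2)]$ modulo $p\Gamma$ into $[0,p\delta_1]$ by subtracting a suitable multiple of $p$, then further into $[0,p\gamma]$ using $\delta_1\in\Delta_\gamma$; I expect this bookkeeping with the convex hulls and the mod-$p\Gamma$ reduction to be the main (though still routine) obstacle, and care is needed because $\Gamma$ need not be archimedean, so "reducing mod $p$" must be phrased via subtracting $p\beta$ with $\beta$ chosen inside the appropriate convex hull.

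Finally, for the statement about $\Gamma/\Delta_\gamma$: let $\bar C=C/\Delta_\gamma$ be a non-trivial convex subgroup, so $C\supsetneq\Delta_\gamma$ is convex in $\Gamma$. I would pick $\delta\in C$ with $\delta>0$ and $\delta\notin\Delta_\gamma$; by definition of $\Delta_\gamma$ there is then some $\alpha$ with $0\le\alpha\le p\delta$ such that $\alpha\notin[0,p\gamma]+p\Gamma$, i.e. $\alpha\notin p\Gamma+[0,p\gamma]$. Since $0\le\alpha\le p\delta$ and $\delta\in C$ with $C$ convex, $\alpha\in C$, so the class $\bar\alpha\in\bar C$ is defined. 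The claim is that $\bar\alpha\notin p\bar C$: if $\bar\alpha=p\bar\eta$ for some $\eta\in C$, then $\alpha-p\eta\in\Delta_\gamma$, so $|\alpha-p\eta|\in\Delta_\gamma$, whence (using $\gamma\in\Delta_\gamma$ and the description of $\Delta_\gamma$, or directly) $\alpha-p\eta$ differs from an element of $[0,p\gamma]$ by an element of $p\Gamma$ — here I would use that every element of $\Delta_\gamma$ lies in $p\Gamma+[0,p\gamma]$, which follows from the interval condition applied with $\delta'=\alpha-p\eta$ and $\alpha'=|\alpha-p\eta|$ itself. That would give $\alpha\in p\Gamma+[0,p\gamma]$, contradicting the choice of $\alpha$. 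Hence $\bar C$ is not $p$-divisible, completing the proof. I would double-check the last step to make sure the element of $\Delta_\gamma$ one feeds into the interval condition is indeed $\le p\gamma$ in absolute value or otherwise handled by convexity of $[0,p\gamma]+p\Gamma$ under passing to smaller elements.
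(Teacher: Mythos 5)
Your proposal is correct and follows essentially the same route as the paper's proof: closure under addition is obtained by reducing an arbitrary $\alpha\in[0,p(\delta_1+\delta_2)]$ modulo $p\Gamma$ back into an interval covered by the hypothesis, and the statement about $\Gamma/\Delta_\gamma$ rests on the same key observation that every element of $\Delta_\gamma$ lies in $[0,p\gamma]+p\Gamma$. Only tighten the phrase suggesting it suffices to check $p(\delta_1+\delta_2)$ itself --- the set $[0,p\gamma]+p\Gamma$ is not downward closed, so one must (as your following sentence in fact does) treat each $\alpha\in[0,p(\delta_1+\delta_2)]$ separately, e.g.\ by subtracting $p\delta_1$ when $\alpha>p\delta_1$ and invoking $\delta_2\in\Delta_\gamma$.
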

\begin{proof} By definition, $\Delta_\gamma$ is a 
$\{\gamma\}$-definable convex subset of $\Gamma$ containing $\gamma$ with 
$\Delta_\gamma = -\Delta_\gamma$.
We now show that $\Delta_\gamma$ is a subgroup of $\Gamma$.
As $\Delta_\gamma$ is convex, it suffices to show that for all 
 $\delta \in \Delta_\gamma$ we have $\delta+\delta \in \Delta_\gamma$.
Since we have $\Delta_\gamma = -\Delta_\gamma$, it suffices to consider
the case $\delta>0$.
Take any $\delta \in \Delta_\gamma$ with $\delta>0$ and $\beta \in 
\Gamma$ with 
$$0 \leq |\beta| \leq p \cdot (\delta+ \delta).$$
In case we have $|\beta| \leq p \cdot \delta$ we immediately get 
$|\beta| \in [0,p \cdot \gamma]+p\Gamma$. Otherwise, we have
$p\cdot \delta < |\beta| \leq p \cdot (\delta+ \delta)$,
so we get $|\beta|-p\delta \leq p\delta$. This implies again
$|\beta| \in [0,p \cdot \gamma]+p\Gamma$.
Overall, we get $[0,p\cdot (\delta+\delta)] \subseteq 
[0,p \cdot \gamma]+p\Gamma$, i.e., $\delta+\delta \in \Delta_\gamma$
as required.

Let $\tilde{\Delta} \leq \Gamma$ be a convex subgroup with 
$\Delta_\gamma \subseteq \tilde{\Delta}$. 
If $\tilde{\Delta}/\Delta_\gamma$ 
is $p$-divisible, then for any $\tilde{\delta} \in \tilde{\Delta}$ there is
some $\delta \in \Delta_\gamma$ with $\tilde{\delta}-\delta\in p\Gamma$. 
Fix some $\tilde{\delta} \in \tilde{\Delta}$ and take any
$\tilde{\beta} \in [0,p\cdot|\tilde{\delta}|]\subseteq \tilde{\Delta}$.
Then, there is some $\beta \in \Delta_\gamma$ with
$$\tilde{\beta}\in \beta + 
p\Gamma \subseteq [0,p\cdot\gamma]+p\Gamma.$$
Thus, we get $\tilde{\delta} \in \Delta_\gamma$ and hence $\tilde{\Delta}
=\Delta_\gamma$. As any convex subgroup of $\Gamma/\Delta_\gamma$ corresponds
to a subgroup $\tilde{\Delta}\leq \Gamma$ as above, we conclude that 
$\Gamma/\Delta_\gamma$ has no non-trivial
$p$-divisible convex subgroup. 
\end{proof}

If $\Gamma$ is the value group of a definable valuation $v$ on a field $K$, the
construction in the Lemma gives rise to a definable coarsening of $v$. 
As discussed
in the next remark, this is a special case of a construction
introduced by Arason, Elman and Jacob (see \cite{AEJ87}).
\begin{Rem}
Let $(K,v)$ be a valued field and $t \in \mathfrak{m}_v$. Consider
the set
$$T_t \coloneqq \Set{ x \in K^\times | \exists z:\, v(t^{-p})\leq v(xz^p)\leq 
v(t^p)}.$$
It is straightforward to check that if $v$ is $\emptyset$-definable, 
$T_t$ is a $t$-definable
subgroup of $K^\times$. 
In \cite{AEJ87}, the authors introduce a method 
how to obtain definable valuation
rings from certain definable subgroups of $K^\times$ and discuss conditions
under which this valuation ring is non-trivial.
Using the notation and machinery from 
\cite{AEJ87} (in particular Theorem 2.10 and Lemma 3.1), 
one can show that there is a valuation ring $\mathcal{O}(T_t,T_t) \subseteq K$
which is trivial if and only of $T_t = K^\times$.

Now, let $\Delta_{v(t)}$ be the convex subgroup of $vK$ as defined in Lemma 
\ref{Dg}. The valuation ring $\mathcal{O}(T_t,T_t)$ is exactly the coarsening
of $v$ which is obtained by quotienting $vK$ by 
the convex subgroup $\Delta_{v(t)}$. This valuation can also be described as
the finest coarsening $w$ of $v$ such that we have $t \in \mathcal{O}_w^\times$
and such that no non-trivial convex subgroup of $wK$ is $p$-divisible.
\end{Rem}

Proposition \ref{antireg} 
and Lemma \ref{Dg} are the two main ingredients needed 
to show that on any field admitting a henselian valuation with non-divisible
value group there is a non-trivial definable henselian valuation: 
\begin{Prop} \label{nondiv}
Assume that some henselian valuation $v$ on $K$ has a 
non-divisible value group. 
Then, some non-trivial (henselian) coarsening of $v$ is definable on $K$
(using at most $1$ parameter).
\end{Prop}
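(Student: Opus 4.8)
The plan is to split into cases according to whether the value group $vK$ is $p$-antiregular for the relevant primes. Since $vK$ is non-divisible, there is some prime $p$ with $vK \neq pvK$. If for some such $p$ the group $vK$ is \emph{not} $p$-antiregular, then Proposition \ref{antireg} applies directly and yields a non-trivial $\emptyset$-definable (hence parameter-definable) henselian coarsening of $v$, and we are done. So the real content is the case where $vK$ is $p$-antiregular for \emph{every} prime $p$ with $vK \neq pvK$; fix one such $p$.

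**The main case.** As in the proof of Proposition \ref{antireg}, we first reduce to the situation $\zeta_p \in K$ in case $\mathrm{char}(K) \neq p$: passing to the finite extension $F = K(\zeta_p)$ (which is $\emptyset$-interpretable in $K$), the unique prolongation $w$ of $v$ to $F$ still has non-divisible value group, and by Lemma \ref{infty}(2) the property `$wF$ is $p$-antiregular and $wF \neq pwF$' is inherited, so a parameter-definable henselian coarsening of $w$ pulls back to one of $v$. Hence assume $\zeta_p \in K$ (so $K \neq K(p)$). Now pick any $\gamma \in vK$ with $\gamma > 0$ and $p \nmid \gamma$, and form the convex subgroup $\Delta_\gamma \leq vK$ from Lemma \ref{Dg}. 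Let $w$ be the coarsening of $v$ obtained by quotienting $vK$ by $\Delta_\gamma$, so that $wK \cong vK/\Delta_\gamma$. By Lemma \ref{Dg}, no non-trivial convex subgroup of $wK$ is $p$-divisible; and since $\gamma \notin \Delta_\gamma$ would force $wK \neq pwK$ — I should check this: because $\gamma \in \Delta_\gamma$ by Lemma \ref{Dg}, I instead argue that $\Delta_\gamma \subsetneq vK$, which holds because $vK$ being $p$-antiregular has no rank-$1$ quotient, in particular $\Delta_\gamma \neq vK$ (if $\Delta_\gamma = vK$ then $vK$ itself has no non-trivial $p$-divisible convex subgroup, which is fine, but we need the coarsening $w$ to be non-trivial). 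So $w$ is a non-trivial coarsening of $v$, and it is henselian since coarsenings of henselian valuations are henselian.

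**Making $w$ definable.** The valuation $w$ is $\{t\}$-definable for a suitable parameter $t$ whenever $v$ itself is $\emptyset$-definable, via the subgroup $T_t$ of the Remark following Lemma \ref{Dg} (take $t \in K^\times$ with $v(t) = \gamma$). The issue is that $v$ is not assumed definable. As in the introduction and the proof of Proposition \ref{antireg}, the remedy is to replace $v$ by the canonical $p$-henselian valuation $v_K^p$, which is $\emptyset$-definable by Theorem \ref{MT} since $K \in \mathcal{K}_p$. Because $v$ is henselian it is comparable to $v_K^p$; if $v_K^p$ is a coarsening of $v$, its value group $v_K^pK$ is a quotient of $vK$ by a convex subgroup, hence (being a non-trivial quotient of the $p$-antiregular group $vK$, or else trivial — but it is non-trivial as $K \neq K(p)$) still satisfies $v_K^pK \neq p\,v_K^pK$, so we may run the $\Delta_\gamma$-construction on the $\emptyset$-definable valuation $v_K^p$ in place of $v$ and obtain a $1$-parameter-definable non-trivial henselian coarsening of $v_K^p$, which is then also a coarsening of $v$. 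If instead $v$ is a proper coarsening of $v_K^p$, then $v_K^p$ itself is already a non-trivial henselian coarsening of... no: here $v$ coarsens $v_K^p$ is false, rather $v_K^p$ refines $v$, so $v$ is $\emptyset$-definable in terms of $v_K^p$? Not necessarily. The cleaner statement: every henselian valuation on $K$ is a coarsening of $v_K^p$ when $Kv_K^p$ admits no Galois extension of degree $p$; but here $Kv_K$ may admit such extensions. The correct dichotomy, exactly as used in Proposition \ref{antireg}, is: either $v_K^p$ is comparable to $v$ with $v_K^p$ a coarsening of $v$, in which case we argue as above; or $v$ is a coarsening of $v_K^p$, in which case $v_K^pK$ surjects onto $vK$ and we instead apply the $\Delta_\gamma$-construction to $v_K^p$ directly — the resulting coarsening has value group with no non-trivial $p$-divisible convex subgroup, and by choosing $\gamma$ appropriately (a lift of a non-$p$-divisible element of $vK$) we ensure the resulting valuation is a coarsening of $v_K^p$ that is at least as coarse as... this needs care.

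**The expected obstacle.** The genuinely delicate point is the bookkeeping in the last paragraph: arranging the parameter $\gamma$ (equivalently the parameter $t \in K^\times$) so that the $\{t\}$-definable coarsening of the $\emptyset$-definable valuation $v_K^p$ is simultaneously (i) non-trivial and (ii) a coarsening of the given henselian $v$. Non-triviality comes from $p$-antiregularity of the relevant value group (no rank-$1$, no $p$-divisible quotients force $\Delta_\gamma$ to be proper); being a coarsening of $v$ will follow because the constructed valuation's value group is a quotient killing a convex subgroup that is small relative to where $v$ sits, and comparability of all henselian valuations on $K$ with $v_K^p$ does the rest. I expect the final proof to handle this by the same two-subcase split on the position of $v_K^p$ relative to $v$ that appears at the end of the proof of Proposition \ref{antireg}, now combined with Lemma \ref{Dg} in place of Lemma \ref{Delta}.
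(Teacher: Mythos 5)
Your overall strategy is exactly the paper's: split on whether $vK$ is $p$-antiregular, dispose of the non-antiregular case via Proposition \ref{antireg}, reduce to $\zeta_p \in K$, and in the antiregular case apply the construction of Lemma \ref{Dg} with a parameter $t \in \mathfrak{m}_v$ satisfying $p \nmid v(t)$, relative to the $\emptyset$-definable valuation $v_K^p$. But you leave both substantive steps unproven. First, the non-triviality claim $\Delta_\gamma \neq \Gamma$: you assert it ``holds because $vK$ being $p$-antiregular has no rank-$1$ quotient,'' but that is not an argument, and the no-rank-$1$ condition alone is not what does the work. The actual proof needs both halves of $p$-antiregularity: if $\Delta_\gamma = \Gamma$, then every $|\delta|$ lies in $[0,p\gamma]+p\Gamma \subseteq \langle\gamma\rangle + p\Gamma$, so $\Gamma/\langle\gamma\rangle$ is $p$-divisible, hence trivial by the no-$p$-divisible-quotient half; but then $\Gamma = \langle\gamma\rangle$ has the rank-$1$ quotient $\Gamma/B_\gamma$ (with $B_\gamma$ the largest convex subgroup omitting $\gamma$), contradicting the other half. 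Without this, you have no guarantee that the definable coarsening you produce is non-trivial, which is the whole point.

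Second, you explicitly flag but do not resolve the case where $v_K^p$ properly refines $v$ (``this needs care,'' ``the expected obstacle''). This is the heart of the proof and it has a clean resolution you should have supplied: take $t \in \mathfrak{m}_v$ with $p \nmid v(t)$ and run the construction of Lemma \ref{Dg} on $\Gamma' := v_K^p K$ at $\gamma' := v_K^p(t)$. Writing $vK = \Gamma'/C$ for a convex subgroup $C$, the condition $v(t)>0$ forces $|c| < \gamma'$ for all $c \in C$, so $C \subseteq \langle\gamma'\rangle \subseteq \Delta_{\gamma'}$; hence the resulting $\{t\}$-definable coarsening of $v_K^p$ is automatically a coarsening of $v$, and its non-triviality follows because $\Delta_{\gamma'}/C$ is contained in the subgroup $\Delta_{v(t)}$ computed inside $vK$, which is proper by the first point. (The paper packages this by computing $\Delta_\gamma$ directly in the value group of the prolongation $w$ of $v$ and invoking the description of $\mathcal{O}(T_t,T_t)$ as the finest coarsening making $t$ a unit whose value group has no non-trivial $p$-divisible convex subgroup.) As written, your proposal identifies all the right ingredients but proves neither of the two facts on which the proposition actually rests.
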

\begin{proof}
Let $(K,v)$ be henselian such that $vK$ is not $p$-divisible for some 
prime $p$.
If $vK$ is not $p$-antiregular, then it admits a $\emptyset$-definable 
non-trivial coarsening by
Proposition \ref{antireg}. Thus, we may assume that $vK$ is $p$-antiregular
which means that it has no non-trivial $p$-divisible quotient and 
no rank-$1$ quotient. 

Consider $F:=K(\zeta_p)$ in
case $\mathrm{char}(K)\neq p$: By Lemma \ref{infty},
the unique prolongation $w$ of $v$ to 
$K(\zeta_p)$ will again have non-$p$-divisible and $p$-antiregular value group. 
If we define 
a coarsening of $w$ with parameters from $K$ on $F$, 
its restriction to $K$ is 
also definable (with parameters from $K$).
If $\mathrm{char}(K)= p$, we set $F:=K$.

We now have $F \neq F(p)$ and -- as $[F:K]$ is prime to $p$ -- 
for any $t \in \mathfrak{m}_v$ with $p \nmid v(t)$ we get $p \nmid w(t)$.
By construction, $v_F^p$ is $\emptyset$-definable on $F$. 
Note that, by henselianity, $w$ is comparable to $v_F^p$. If $v_F^p$ is a
coarsening of $w$, we have found a non-trivial $\emptyset$-definable 
coarsening of $w$ (and thus of $v$). Hence, we may assume that $v_F^p$ refines
$w$. 

Choose any $t \in \mathfrak{m}_{v} \subseteq \mathfrak{m}_w$ 
with $p \nmid v(t)$. Then, we also have 
$p \nmid w(t)=: \gamma$. Define $\Gamma\coloneqq wF$ and consider
the convex subgroup 
$$\Delta_{\gamma}=\Set{\delta \in \Gamma | [0,p\cdot|\delta|] 
\subseteq [0,p\cdot \gamma]+p \Gamma } $$
of $\Gamma$ as in Lemma \ref{Dg}. We claim that 
$\Delta_{\gamma} \neq \Gamma$ holds.
Assume for a contradiction that we have $\Delta_\gamma=\Gamma$.
Let $\langle \gamma \rangle$ denote
the convex subgroup of $\Gamma$ generated by $\gamma$.
Then, we have for all $\delta \in \Delta_\gamma = \Gamma$ that
$$|\delta| \in [0,p\cdot\gamma]+p\Gamma \subseteq
\langle \gamma \rangle + p\Gamma$$
holds. Thus, $\Gamma/\langle \gamma \rangle$ is $p$-divisible,
and thus -- as $\Gamma$ is $p$-antiregular --  
trivial.
Now, the maximal convex subgroup of $\Gamma$ not containing $\gamma$, i.e.
$$B_\gamma \coloneqq 
\Set{ \delta \in \Gamma | \forall n \in \mathbb{Z}:\, 
|n \cdot \delta| < \gamma },$$
is a proper convex subgroup of $\Gamma$ such that $\Gamma/B_\gamma$ 
has rank $1$. This 
contradicts the $p$-antiregularity of $\Gamma$. Thus, we conclude
$\Gamma \neq \Delta_\gamma$.

Hence, the coarsening of $w$ which corresponds to quotienting $wF$ 
by $\Delta_\gamma$ is a non-trivial $\{t\}$-definable coarsening of $w$. Its
restriction to $K$ is a non-trivial $\{t\}$-definable coarsening of $v$.
\end{proof}

\section{Definitions without parameters} \label{sec:0}
We are now in a position to prove our main theorem 
on
the existence of a parameter-free definable henselian valuation 
on a henselian field
as stated in the introduction:
{
\renewcommand{\theTh}{A}
\begin{Thm}
Let $K \neq K^\mathrm{sep}$ be a henselian field. Then $K$ admits a
$\emptyset$-definable non-trivial henselian valuation unless
\begin{enumerate}
\item $Kv_K \neq Kv_K^\mathrm{sep}$ \emph{and}
\item $Kv_K$ is $t$-henselian \emph{and}
\item $v_KK$ is $p$-antiregular for all 
primes $p$ with $v_KK \neq p v_KK$ (e.g., if $v_KK$ is divisible). 
\end{enumerate}
\end{Thm}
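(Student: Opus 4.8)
The plan is to prove the contrapositive: if at least one of the three conditions (1)--(3) fails, then $K$ admits a $\emptyset$-definable non-trivial henselian valuation. As set up in the introduction, since $K \neq K^{\mathrm{sep}}$ and (we may assume) $K$ is not real closed, $K$ $\emptyset$-interprets a finite Galois extension $F$ on which, for some prime $p$, the canonical $p$-henselian valuation $v_F^p$ is $\emptyset$-definable and non-trivial; moreover $v_F^p$ is comparable to every henselian valuation on $F$. I would first dispose of the easy case: if $v_F^p$ is itself henselian, its restriction to $K$ is a non-trivial $\emptyset$-definable henselian valuation and we are done. So assume $v_F^p$ is strictly finer than $v_F$ (equivalently, not henselian), so that $v_F$ is a proper coarsening of $v_F^p$ and the task is to $\emptyset$-define some henselian coarsening of $v_F^p$. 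Throughout I would freely transfer between $K$ and $F$: since $F$ is $\emptyset$-interpretable in $K$ and $[F:K]$ is finite, a $\emptyset$-definable henselian coarsening on $F$ restricts to one on $K$ (using that coarsenings of henselian valuations are henselian, and Lemma \ref{infty}(2) to control $p$-divisibility/$p$-antiregularity under the finite extension).

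Next I would run the case analysis on how conditions (1)--(3) can fail, phrased in terms of $v_K$ (equivalently $v_F$, its canonical henselian valuation after base change). \emph{Case (1) fails:} $Kv_K = (Kv_K)^{\mathrm{sep}}$, i.e. the residue field of the canonical henselian valuation is separably closed. Then by the definition of $v_K$, the valuation $v_F^p$ (whose residue field admits no degree-$p$ Galois extension and which refines or coarsens $v_F$) — here I would argue that $v_F^p$ must in fact be henselian or that $v_F$ itself is $\emptyset$-definable via Theorem \ref{MT} together with $v_F$ being recoverable from $v_F^p$; more precisely, when $Fv_F$ is separably closed, $v_F^p = v_F$ is henselian and we have won directly. \emph{Case (3) fails:} there is a prime $q$ with $v_KK \neq q\,v_KK$ and $v_KK$ not $q$-antiregular. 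Then Proposition \ref{antireg} applied to $(K,v_K)$ and the prime $q$ gives a non-trivial $\emptyset$-definable henselian coarsening of $v_K$, and we are done. \emph{Case (2) fails:} $Kv_K$ is not $t$-henselian. This is the substantive case, and the one I expect to be the main obstacle.

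For the case where (2) fails, the idea is to pass to the residue field $\bar{K} := Kv_K$, which is not $t$-henselian, and exploit this to get a $\emptyset$-definable valuation on $\bar K$ that lifts. Since $\bar K$ is not $t$-henselian, it is in particular not henselian, yet (by condition (1) holding, which we may assume since otherwise we are already done) $\bar K \neq \bar K^{\mathrm{sep}}$; the plan is to use a result of the Prestel--Ziegler type — that a non-$t$-henselian field is, in a first-order-definable way, ``far'' from admitting a henselian valuation — to produce a $\emptyset$-definable subset witnessing this, and then push it up through $v_K$ to get a $\emptyset$-definable henselian valuation on $K$ itself (the composite of $v_K$ with the relevant valuation from $\bar K$, or $v_K$ together with the definable data from $\bar K$). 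The key technical point will be that non-$t$-henselianity of the residue field must be detectable by an $\mathcal{L}_{\mathrm{ring}}$-formula evaluated in $K$ via $v_K$, and that this detection is uniform enough to pin down a particular henselian valuation on $K$ without parameters. I would also need to treat separately the subcase where $v_K$ has mixed or positive residue characteristic versus equicharacteristic $0$, since the $p$-henselian machinery and Theorem \ref{MT} require roots of unity and the characteristic hypotheses; in the awkward characteristics one falls back on base-changing to $K(\zeta_p)$ (still $\emptyset$-interpretable, still finite over $K$) before applying Theorem \ref{MT}. The main obstacle, then, is case (2): converting the semantic statement ``$Kv_K$ is not $t$-henselian'' into an explicit parameter-free definition of a henselian valuation on $K$, and I expect this to rest on a careful combination of the $p$-henselian definability theorem \ref{MT}, the comparability of $v_F^p$ with all henselian valuations, and the rank/quotient analysis of value groups from Section \ref{sec:anti}.
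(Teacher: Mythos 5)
Your skeleton is the right one and coincides with the paper's: argue the contrapositive and split according to which of the conditions (1)--(3) fails. The case where (3) fails is handled exactly as in the paper, by Proposition \ref{antireg} applied to $(K,v_K)$ and a prime $p$ with $v_KK$ neither $p$-divisible nor $p$-antiregular. For the case where (1) fails your direct argument is essentially sound, though the claim ``$v_F^p = v_F$'' is too strong: what is true (and suffices) is that when $Fv_F$ is separably closed, $v_F$ is a $p$-henselian valuation whose residue field admits no Galois extension of degree $p$, so $v_F^p$ is a \emph{coarsening} of $v_F$, hence henselian, non-trivial because $F \neq F(p)$, and $\emptyset$-definable by Theorem \ref{MT}. (The paper instead simply quotes \cite[Theorem 3.10]{JK14a} here, but your route is viable.)

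The genuine gap is the case where (2) fails, i.e.\ $Kv_K$ is not $t$-henselian, which you yourself flag as ``the main obstacle'' and for which you offer only a plan, not a proof. The paper closes this case by citing \cite[Proposition 5.5]{FJ14}: if $Kv_K$ is not $t$-henselian, then $K$ admits a $\emptyset$-definable non-trivial henselian valuation. This is a substantive external input. Your sketch --- detect the failure of $t$-henselianity of the residue field by an $\mathcal{L}_{ring}$-formula and ``push it up through $v_K$'' --- has the right flavour (it rests on the Prestel--Ziegler elementary axiomatization of $t$-henselianity), but nothing in the present paper's toolkit (Proposition \ref{antireg}, Lemma \ref{Delta}, Lemma \ref{Dg}, Theorem \ref{MT}) produces such a definition: all of those define coarsenings via properties of the \emph{value group}, whereas in this case the leverage must come from the \emph{residue field}, and converting ``$Kv_K$ fails some elementary $t$-henselianity axiom'' into a parameter-free definition of a specific henselian valuation on $K$ requires the separate argument of \cite{FJ14}. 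Without that ingredient the proof is incomplete.
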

\addtocounter{Th}{-1}
}

\begin{proof}
Let $K \neq K^\mathrm{sep}$ be a henselian field. In case
\begin{enumerate}[($\neg$1)]
\item $Kv_K = Kv_K^\mathrm{sep}$, then $K$ admits a $\emptyset$-definable
henselian valuation by \cite[Theorem 3.10]{JK14a}. 
\item $Kv_K$ is not $t$-henselian, then $K$ admits a
$\emptyset$-definable henselian valuation by \cite[Proposition 5.5]{FJ14}.
\item 
$v_KK$ is non-$p$-divisible and not $p$-antiregular for 
some $p$ 
then some non-trivial (hen\-se\-lian) coarsening of $v_K$ is
$\emptyset$-definable by Proposition \ref{antireg}.
\end{enumerate}
Hence, if $K$ does not satisfy one of the conditions (1)-(3) occuring 
in the theorem, then $K$
admits a non-trivial $\emptyset$-definable henselian valuation.
\end{proof}
Note that real closed fields are subsumed in the `unless' setting: If $K$
is henselian and real closed, then $Kv_K$ is real closed (and thus $t$-henselian
and not separably closed) and $v_KK$ is divisible.

We first draw some conclusions from Theorem \ref{1}.
\begin{Cor} \label{Cor1}
Let $K$ be a non-separably closed 
henselian field with $G_K$ small or $\mathrm{trdeg}(K)$ finite. 
Then $K$ admits a
$\emptyset$-definable henselian valuation 
unless $v_KK$ is divisible and $Kv_K$ is $t$-henselian but not separably closed.
\end{Cor}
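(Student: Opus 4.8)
The plan is to derive Corollary \ref{Cor1} directly from Theorem \ref{1} by checking that, in both the ``$G_K$ small'' case and the ``$\mathrm{trdeg}(K)$ finite'' case, condition (3) of the theorem forces $v_KK$ to be actually divisible, so that the three exceptional conditions collapse to exactly ``$v_KK$ divisible and $Kv_K$ $t$-henselian but not separably closed''. Concretely, suppose $K$ is as in the statement and $K$ falls under the ``unless'' clause of Theorem \ref{1}, i.e.\ (1)--(3) hold. I want to show $v_KK$ is divisible; conversely, if $v_KK$ is divisible and $Kv_K$ is $t$-henselian but not separably closed, then (1)--(3) hold trivially (divisibility makes (3) vacuous), so $K$ has no $\emptyset$-definable henselian valuation by Theorem \ref{1}.

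So the heart of the matter is: under (3), ruling out a prime $p$ with $v_KK\neq p\,v_KK$. Assume such a $p$ exists; then by (3), $v_KK$ is $p$-antiregular, and by Lemma \ref{infty}(1) we get $[v_KK : p\,v_KK] = \infty$. The idea is that an infinite index is incompatible with both smallness of $G_K$ and finiteness of $\mathrm{trdeg}(K)$. For the transcendence degree case: $v_KK$ is an ordered abelian group which arises as the value group of a valuation on $K$; by the Abhyankar-type bound (or simply the standard fact that the rational rank of the value group of any valuation on $K$ is at most $\mathrm{trdeg}(K/\mathbb{F})$ where $\mathbb{F}$ is the prime field, see e.g.\ \cite[Ch.~VI]{Efr} or a reference already available), $\dim_{\mathbb{Q}}(v_KK\otimes_{\mathbb{Z}}\mathbb{Q})$ is finite, and a finitely-rational-rank ordered abelian group always has $[v_KK : p\,v_KK]$ finite --- contradicting $p$-antiregularity. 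Hence $v_KK$ is $p$-divisible for every prime $p$, i.e.\ divisible. For the small-$G_K$ case: smallness of $G_K$ means $K$ has only finitely many extensions of each degree, in particular the index $[K^\times : (K^\times)^p]$ is finite; via the canonical henselian valuation this controls $[v_KK : p\,v_KK]$ (there is a short exact sequence relating $K^\times/(K^\times)^p$, $v_KK/p\,v_KK$ and $Kv_K^\times/(Kv_K^\times)^p$ when $\zeta_p\in K$ or $\mathrm{char}=p$; after passing to $K(\zeta_p)$, which is again small and where $v_K$ prolongs with finite-index value group by Lemma \ref{infty}(2)), so again $[v_KK : p\,v_KK]$ is finite, contradicting $p$-antiregularity. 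So $v_KK$ is divisible here too.

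The main obstacle I anticipate is making the ``small $G_K$ forces $[v_KK:pv_KK]$ finite'' step clean: one needs to pass to $K(\zeta_p)$ to have the Kummer-theoretic exact sequence available, verify that smallness is inherited by finite extensions (it is, since small absolute Galois groups are closed under open subgroups and finite extensions only change $G_K$ to an open subgroup), and then transfer finiteness of the index back down via Lemma \ref{infty}(2) as is already done in the proof of Proposition \ref{antireg}. For the transcendence-degree half, the only care needed is to cite the correct bound on the rational rank of the value group; this is classical, and we only need it for the \emph{canonical} henselian valuation $v_K$, for which it certainly applies. Once both halves give ``$v_KK$ divisible'', the corollary is just a restatement of Theorem \ref{1} in these two special cases, noting that ``$Kv_K$ $t$-henselian'' together with ``$Kv_K\neq Kv_K^{\mathrm{sep}}$'' is exactly ``$Kv_K$ $t$-henselian but not separably closed''.

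I would write the proof as: first the direction ``if $v_KK$ divisible and $Kv_K$ $t$-henselian but not separably closed, no $\emptyset$-definable henselian valuation'' (immediate from Theorem \ref{1}); then the converse, assuming $K$ is in the exceptional case of Theorem \ref{1} and deriving divisibility of $v_KK$ by the two arguments above according to which hypothesis ($G_K$ small, or $\mathrm{trdeg}(K)$ finite) is in force, using Lemma \ref{infty} in both and the rational-rank bound / Kummer theory respectively; and conclude that the exceptional case of Theorem \ref{1} coincides here with the stated exceptional case.
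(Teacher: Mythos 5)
Your proof of the actual content of the corollary is correct and follows the paper's own route: apply Theorem \ref{1}, observe that in the exceptional case a non-divisible $v_KK$ would have to be $p$-antiregular for some prime $p$ with $v_KK \neq p\,v_KK$, and rule this out because either smallness of $G_K$ or finiteness of $\mathrm{trdeg}(K)$ forces the index $[v_KK : p\,v_KK]$ to be finite, contradicting Lemma \ref{infty}(1). The paper simply asserts this finiteness without proof; your justifications (the rational rank bound for the transcendence-degree case, and the Kummer-theoretic exact sequence after passing to $K(\zeta_p)$ for the small case) are the standard ones and are fine --- just note that when $\mathrm{char}(K) = p$ the finiteness of $[v_KK : p\,v_KK]$ comes from Artin--Schreier rather than Kummer theory, since $K^\times/(K^\times)^p$ is not Galois-theoretically controlled in characteristic $p$.

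There is, however, one genuine error, although it concerns a claim you did not need to make. Your ``converse'' direction --- that if $v_KK$ is divisible and $Kv_K$ is $t$-henselian but not separably closed, then $K$ admits no $\emptyset$-definable henselian valuation ``by Theorem \ref{1}'' --- misreads the theorem. The ``unless'' in Theorem \ref{1} (and in the corollary) is one-directional: it asserts the existence of a definable valuation when one of the conditions (1)--(3) fails, not non-existence when all three hold. The paper states explicitly that these conditions are not sufficient for a full characterization, and Example \ref{ex:anti} exhibits a field satisfying all of (1)--(3), with divisible value group and $t$-henselian, non-separably-closed residue field, which nevertheless admits a $\emptyset$-definable henselian valuation. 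You should drop that paragraph; the corollary only requires the forward direction, which you have proved correctly.
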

\begin{proof}
Let $K$ be henselian and assume $K\neq K^\mathrm{sep}$. 
If $K$ admits no
$\emptyset$-definable henselian valuation then, by Theorem \ref{1},
$Kv_K$ is $t$-henselian but not separably closed. 
If $v_KK$ is not divisible, Theorem \ref{1} implies that 
$v_KK$ is $p$-antiregular and not $p$-divisible for at least one prime $p$.

Let $K$ be a field of finite transcendence degree or such that $G_K$ is small. 
Then, the index $[v_KK:pv_KK]$ is finite for any prime $p$.
Hence, Lemma \ref{infty} implies that $v_KK$ is not $p$-antiregular.
\end{proof}

\begin{Cor} \label{cor:fin}
Let $K$ be a henselian field, neither separably closed nor real
closed, and assume 
$\mathrm{trdeg}(K)$ finite. 
Then $K$ admits a
$\emptyset$-definable henselian valuation.
\end{Cor}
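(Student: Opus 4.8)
The plan is to deduce Corollary \ref{cor:fin} from Corollary \ref{Cor1}. Both corollaries concern a henselian field $K$ with $\mathrm{trdeg}(K)$ finite; the only difference is that here we additionally assume $K$ is not real closed, and in exchange we want to drop the exceptional clause entirely. So first I would simply invoke Corollary \ref{Cor1}: either $K$ admits a $\emptyset$-definable henselian valuation — in which case we are done — or else $v_KK$ is divisible and $Kv_K$ is $t$-henselian but not separably closed. It remains to rule out the latter case under the hypotheses that $\mathrm{trdeg}(K)$ is finite and $K$ is not real closed.

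The key point is that a $t$-henselian, non-separably-closed field of finite transcendence degree must already be henselian. Indeed, $t$-henselian means $Kv_K$ is elementarily equivalent to some henselian field $L$; since being not separably closed is elementary, $L$ is not separably closed either, so by Koenigsmann's result (used already in the Introduction) $L$ interprets, for some prime $p$, a non-trivial $\emptyset$-definable $p$-henselian valuation, and the same formula defines a non-trivial $p$-henselian valuation on $Kv_K$. Now if $Kv_K$ has finite transcendence degree, its value groups have finite $[\,\cdot\,:p\,\cdot\,]$ index, and one argues — via the standard fact that a field which is $t$-henselian but not henselian cannot be finitely generated in an appropriate sense, or more directly via the fact that $p$-henselianity together with the finiteness constraints forces henselianity — that $Kv_K$ is in fact henselian. (This is precisely the kind of argument packaged in Corollary \ref{Cor1}'s proof, applied now to $Kv_K$ in place of $K$.) But $\mathrm{trdeg}(Kv_K)\leq\mathrm{trdeg}(K)<\infty$, so we may apply Corollary \ref{Cor1} to $Kv_K$ itself: since $Kv_K$ is henselian and not separably closed, it either has a $\emptyset$-definable henselian valuation or falls in its own exceptional case. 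Iterating, or using that the residue field chain must terminate because transcendence degree drops (strictly, whenever the canonical henselian valuation is non-trivial and the residue field is not algebraically closed over the prime field), we eventually reach a field that is either separably closed — excluded after finitely many steps — or genuinely admits a $\emptyset$-definable henselian valuation, which then pulls back along the (composition of) henselian valuations to a $\emptyset$-definable henselian valuation on $K$.

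The main obstacle is making the descent rigorous: one has to check that the exceptional clause of Corollary \ref{Cor1} cannot persist all the way down. The clean way is to observe that if $Kv_K$ is $t$-henselian but not henselian, then $Kv_K$ is not of finite transcendence degree over its prime field — this is a known consequence of the Prestel--Ziegler machinery, since a $t$-henselian field that is finitely generated (hence, after passing through $v_K$, one ultimately lands over a prime or algebraically closed field) would itself be henselian. Combined with $\mathrm{trdeg}(Kv_K)\leq\mathrm{trdeg}(K)<\infty$, this forces $Kv_K$ to be henselian, contradicting that we are in the exceptional case of Corollary \ref{Cor1} — because that case requires $Kv_K$ to be $t$-henselian but (implicitly, via $v_K$ being canonical and the residue field having no henselian valuation) not henselian. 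The remaining divisibility obstruction for $v_KK$ is handled exactly as in Corollary \ref{Cor1}: finite transcendence degree gives finite $[v_KK : p\,v_KK]$, so by Lemma \ref{infty}(1) $v_KK$ is not $p$-antiregular for any $p$ with $v_KK\neq p\,v_KK$, hence not all three exceptional conditions of Theorem \ref{1} can hold simultaneously unless $v_KK$ is divisible — which is excluded once we know $K$ is not real closed and $Kv_K$ is forced to be henselian. Therefore the exceptional case is vacuous, and $K$ admits a $\emptyset$-definable henselian valuation.
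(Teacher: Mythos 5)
Your ``clean way'' in the final paragraph is exactly the paper's proof: apply Corollary \ref{Cor1}, note that $\mathrm{trdeg}(Kv_K)\leq\mathrm{trdeg}(K)$ is finite, and invoke the fact (the paper cites \cite[Lemma 3.5]{Koe04}) that a $t$-henselian field of finite transcendence degree is henselian, which rules out the exceptional case because $Kv_K$ cannot be henselian without being separably closed, by the definition of the canonical henselian valuation $v_K$. The iterative descent through residue fields sketched in your second paragraph and the closing remarks about a ``remaining divisibility obstruction'' are unnecessary detours --- once $Kv_K$ is shown not to be $t$-henselian-but-not-henselian, the exceptional clause of Corollary \ref{Cor1} (which is a conjunction) already fails and no further argument about divisibility of $v_KK$ is needed.
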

\begin{proof}
Assume $\mathrm{trdeg}(K)$ finite and $K \neq K^\mathrm{sep}$. 
By Corollary \ref{Cor1}, $K$ admits a $\emptyset$-definable henselian 
valuation unless $Kv_K$ is $t$-henselian but not henselian. However,
\cite[Theorem 3.4.2]{EP05} 
implies that $\mathrm{trdeg}(Kv_K)$ is also finite. By
\cite[Lemma 3.5]{Koe04}, 
every $t$-henselian field of finite transcendence degree is
henselian. Thus, $Kv_K$ cannot be $t$-henselian but not henselian.
\end{proof}

\begin{Cor} \label{cor:small}
Let $K$ be a henselian field with $G_K$ small and $\mathrm{char}(Kv_K)=0$. 
Then $K$ admits no
$\emptyset$-definable henselian valuation iff $K\equiv Kv_K$.
\end{Cor}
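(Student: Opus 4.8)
The plan is to prove both directions separately, using Corollary \ref{Cor1} for the forward direction and a direct argument for the converse. For the forward direction, assume $K$ admits no $\emptyset$-definable henselian valuation. Then $K \neq K^\mathrm{sep}$ (a separably closed field is not henselian in our sense), so Corollary \ref{Cor1} applies since $G_K$ is small: we conclude that $v_KK$ is divisible and $Kv_K$ is $t$-henselian but not separably closed. I would then argue that $Kv_K$ must in fact be \emph{trivially} valued from the point of view of $v_K$, i.e. that the residue field carries no further henselian structure — but more to the point, I want to show $K \equiv Kv_K$ as fields. Here the hypothesis $\mathrm{char}(Kv_K)=0$ is crucial: we are in equicharacteristic $0$, so by the Ax--Kochen/Ershov principle the elementary type of $(K,v_K)$ is determined by the elementary types of $Kv_K$ and $v_KK$. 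Since $v_KK$ is divisible, it is an elementary substructure of any divisible ordered abelian group and in particular $(K,v_K) \equiv (Kv_K((v_KK)), v_{\mathrm{can}})$. The remaining point is that $K \equiv Kv_K$ \emph{as pure fields}: this should follow because $Kv_K$ is $t$-henselian, so there is a henselian field $L \equiv Kv_K$, and $L((v_KK)) \equiv Kv_K((v_KK)) \equiv (K,v_K)$ as valued fields by AKE; forgetting the valuation gives $K \equiv L((v_KK))$, and since the canonical henselian valuation on $L((v_KK))$ with divisible value group and $t$-henselian residue field has trivial... — more carefully, one uses that $L$ henselian with divisible value group over it means the power series field is elementarily equivalent to $L$ itself by a standard argument, hence $K \equiv L \equiv Kv_K$.

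For the converse, assume $K \equiv Kv_K$. Suppose toward a contradiction that $K$ admits a $\emptyset$-definable non-trivial henselian valuation, defined by an $\mathcal{L}_{ring}$-formula $\phi(x)$. Then $\phi$ also defines a valuation ring on every field elementarily equivalent to $K$, in particular on $Kv_K$. But I must derive a contradiction from this. The idea is to iterate: $Kv_K \equiv K$, and the canonical henselian valuation $v_{Kv_K}$ on $Kv_K$ has the same residue field $Kv_K \cdot v_{Kv_K}$, which by the structure theory (and the equivalence $K \equiv Kv_K$, which forces $v_{Kv_K}$ to be trivial or to have residue field again $\equiv K$) ... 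The cleanest route: since $\mathrm{char}(Kv_K)=0$, the canonical henselian valuation $v_K$ has residue field $Kv_K \equiv K$, so $v_K$ restricted to (a field $\equiv K$) shows $Kv_K$ itself is henselian with residue field $\equiv K$ — iterating, one builds an infinite chain, and a $\emptyset$-definable henselian valuation on $K$ would have to be a coarsening of $v_K$, hence its residue field $R$ satisfies $Kv_K$ is a proper coarsening residue... Rather than the chain argument, I would instead invoke directly: a $\emptyset$-definable henselian valuation is comparable to $v_K$ and in the equicharacteristic-$0$, divisible-value-group, $t$-henselian-residue-field situation there simply is no such, because defining it on $K$ forces defining it on $Kv_K \equiv K$ with strictly smaller "rank", which is impossible as the relevant invariant (the value group rank of any henselian valuation, which must be positive and finite-or-divisible) cannot strictly decrease forever.

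The main obstacle I anticipate is the converse direction: turning "$K \equiv Kv_K$" into a genuine obstruction to $\emptyset$-definability requires pinning down exactly why the residue field being elementarily equivalent to the whole field prevents a definition. The honest argument is: any $\emptyset$-definable henselian valuation $w$ on $K$ is a coarsening of $v_K$ (since $w$ is henselian and, as $Kv_K \neq Kv_K^\mathrm{sep}$, has non-separably-closed residue field), so its residue field $Kw$ surjects onto (contains as a residue) $Kv_K \equiv K$; but $\phi$ defines the \emph{same} valuation on $Kv_K$, giving by induction a strictly descending sequence of value groups inside $v_KK$ — contradicting that a non-trivial henselian valuation has non-trivial value group while $K \equiv Kv_K$ forces this value group to be "the same" at each stage, i.e. the quotient by it to be trivial. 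I would need to make this precise using that in equicharacteristic $0$ with $v_KK$ divisible, $(K, v_K) \equiv (Kv_K, \text{trivial})$ composed appropriately, so that no non-trivial coarsening of $v_K$ can be $\emptyset$-definable. This last step is where the care goes; everything else is a direct appeal to Corollary \ref{Cor1} and Ax--Kochen/Ershov.
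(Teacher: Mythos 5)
Your forward direction follows the same strategy as the paper (Corollary \ref{Cor1} plus Ax--Kochen/Ershov via a henselian $L \succ Kv_K$), but the step ``$L((v_KK)) \equiv L$'' is not justified as you state it. The ``standard argument'' you invoke needs the value group $v_LL$ of the canonical henselian valuation on $L$ to be divisible: only then is $v_KK \oplus v_LL \equiv v_LL$, which is what AKE requires. Divisibility of $v_KK$ alone does not suffice: if, say, $v_LL$ is $p$-antiregular (an elementary property), then $v_KK \oplus v_LL$ has a non-trivial $p$-divisible quotient and so is \emph{not} elementarily equivalent to $v_LL$; concretely, $L((\mathbb{Q})) \not\equiv L$ for a henselian elementary extension $L$ of the Prestel--Ziegler field of Example \ref{PZ}. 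The paper closes this gap by applying Corollary \ref{Cor1} a second time, to $L$ itself: since $L \equiv Kv_K$ and $Kv_K$ is $t$-henselian but not henselian, $L$ admits no $\emptyset$-definable non-trivial henselian valuation, and $G_L$ is small because $G_{Kv_K}$ is a quotient of $G_K$; hence $v_LL$ is divisible and the chain of AKE equivalences goes through. You need to add this step.

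The converse direction is where the real problem lies. You assert that ``$Kv_K$ itself is henselian with residue field $\equiv K$'' and try to build an infinite-descent argument on that basis; but $Kv_K$ is precisely \emph{not} henselian: when $Kv_K$ is not separably closed, $v_K$ is by definition the finest henselian valuation on $K$, so a non-trivial henselian valuation on $Kv_K$ would compose with $v_K$ to a proper henselian refinement. That non-henselianity is the entire content of the converse, and it makes the argument one line: if $\phi$ were a parameter-free definition of a non-trivial henselian valuation on $K$, then -- since being a non-trivial henselian valuation ring is expressible by a first-order scheme -- $\phi$ would define one on $Kv_K \equiv K$, making $Kv_K$ henselian, a contradiction. (The remaining case, where $Kv_K$ is separably closed and hence $K \equiv Kv_K$ forces $K = K^{\mathrm{sep}}$, is disposed of separately since such fields admit no non-trivial definable valuations at all.) The descending-rank/iteration argument you sketch rests on a false premise and, as you note yourself, is never completed; replace it with this direct transfer.
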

\begin{proof}
Let $K$ be a henselian field with $G_K$ small, $\mathrm{char}(Kv_K)=0$,
which does not admit a
$\emptyset$-definable henselian valuation. 
Corollary \ref{Cor1} implies that 
$v_KK$ is divisible and $Kv_K$ is $t$-henselian. 
By \cite[Lemma 3.3]{PZ78}, there 
is some henselian $L \succ Kv_K$. Note that $G_{Kv_K}$, and hence $G_L$, 
is also small. Using Corollary \ref{Cor1} once more, we get that 
$Lv_L$ is $t$-henselian and
$v_LL$ is divisible. Since the restriction of $v_L$ to $Kv_K$ is trivial,
 we have $\mathrm{char}(Lv_L)=0$. Using
the Ax-Kochen/Ersov Theorem (\cite[Theorem 4.6.4]{PD}) several times, 
we conclude
$$Kv_K \equiv L \equiv Lv_L((\mathbb{Q}))\equiv Lv_L((\mathbb{Q}))((\mathbb{Q})) \equiv
L((\mathbb{Q})) \equiv Kv_K((\mathbb{Q})) \equiv K.$$
On the other hand, if $K \equiv Kv_K$, we have that $K$ is either 
separably
closed (and hence admits no non-trivial 
$\emptyset$-definable henselian valuation) 
or -- by the definition of $v_K$ -- 
that $Kv_K$ is $t$-henselian but not henselian. In the latter case,
$K$ cannot admit a $\emptyset$-definable non-trivial henselian valuation as
otherwise $Kv_K$ would be henselian.
\end{proof}

Note that by \cite[Construction 6.5 and Proposition 6.7]{FJ14}, 
there are fields with small absolute Galois group 
which are $t$-henselian but not henselian. Hence, there are 
henselian fields with small absolute Galois group which admit no 
non-trivial $\emptyset$-definable henselian valuation. Furthermore, 
Example \ref{FJ} shows that there are henselian fields with small
absolute Galois group not admitting any non-trivial definable henselian 
valuation.

We now give an example illustrating that, in general, 
Theorem \ref{1} does not give rise to
a full classification which henselian fields admit $\emptyset$-definable
henselian valuations:
\begin{Ex} \emph{A field admitting an $\emptyset$-definable henselian
valuation satisfying conditions $(1)$-$(3)$ in Theorem \ref{1}:}
\label{PZ1} \label{ex:anti}
Let $K$ be the field as constructed in Example \ref{PZ}, so $K$
is elementarily equivalent to a henselian field but not henselian
and $v_K^2$ is $\emptyset$-definable and 
$p$-antiregular value group for all $p$. 

Consider the canonical henselian valuation $v_L$
on $L=K((\mathbb{Q}))$. Note that $v_L$ is the power series valuation on $L$,
thus $v_LL=\mathbb{Q}$ is divisible and $Lv_L=K$ 
is $t$-henselian but not separably 
closed. In particular, it does not follow from Theorem \ref{1} that
$L$ admits a $\emptyset$-definable henselian valuation.

We claim that $v_L$ is $\emptyset$-definable. Fix any prime $p$.
As $K$ is $2$-henselian, 
$v_L^2$ refines $v_L$. Thus, $v_L^2L$ has a $p$-divisible quotient
(namely $\mathbb{Q}$) and is therefore
not $p$-antiregular.
Furthermore, $v_L^2$ is the composition of $v_L$ and 
$v_K^2$, so -- as $v_K^2K$ is $p$-antiregular --
$\mathbb{Q}$ is the only $p$-regular quotient of $v_L^2L$.
Hence, $v_L^2L$ has no non-$p$-divisible $p$-regular quotient
and so some non-trivial
convex subgroup with $p$-divisible quotient is $\emptyset$-definable in
$v_L^2L$ by Lemma \ref{Delta}.
However, $Lv_L$ is the only such quotient and $v_L^2$ is
$\emptyset$-definable by Theorem \ref{MT}. Thus, 
$v_L$ is $\emptyset$-definable. 
\end{Ex}

The arguments given in the Example above can in general be used to prove the 
following partial converse to Theorem \ref{1}:
\begin{Prop} \label{1.1}
Let $K$ be a henselian field with $\mathrm{char}(Kv_K)=0$. If
\begin{enumerate}
\item $Kv_K \prec L$ for some henselian $L$ with $v_LL$ non-divisible 
\emph{and}
\item $v_KK$ is divisible 
\end{enumerate}
then $K$ admits a $\emptyset$-definable non-trivial henselian valuation.
\end{Prop}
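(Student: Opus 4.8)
The plan is to mimic the argument of Example \ref{PZ1}, but with $K((\mathbb{Q}))$ replaced by the given $K$ and with the role of $v_K^2$ (or more generally a $\emptyset$-definable $p$-henselian valuation on a suitable extension) made precise via Theorem \ref{MT}. First I would reduce to the case that $K$ itself is not separably closed (otherwise $K$ admits no non-trivial henselian valuation at all, contradicting henselianity of $K$, or rather: if $K=K^{\mathrm{sep}}$ then the statement is vacuous or handled separately); and I would note that by hypothesis (1) together with the Ax--Kochen/Ershov transfer, $Kv_K$ is $t$-henselian but, crucially, carries (after passing to the elementary extension $L$) a henselian valuation $v_L$ with non-divisible value group. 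The key point to extract is that $v_L^p$, for a suitable prime $p$ with $v_LL \neq p\,v_LL$, has value group that is \emph{not} $p$-antiregular — this is exactly the mechanism used in Example \ref{PZ1} and is where hypothesis (1) is used.

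Next I would descend from $L$ back to $K$. Since $\mathrm{char}(Kv_K)=0$, by hypothesis (2) we have $v_KK$ divisible and the canonical henselian valuation $v_K$ has residue characteristic $0$; I would then consider, for an appropriate prime $p$, the canonical $p$-henselian valuation on $K$ (after adjoining $\zeta_p$ if necessary, which is harmless by Lemma \ref{infty} and $\emptyset$-interpretability, exactly as in the proof of Proposition \ref{antireg}). Call the resulting field $F$ and the valuation $v_F^p$; it is $\emptyset$-definable by Theorem \ref{MT}. By henselianity $v_F$ (the canonical henselian valuation of $F$) is comparable to $v_F^p$; since $v_FF$ is divisible, $v_F^p$ must be a refinement of $v_F$, and $v_F^p F$ maps onto $v_F F$-plus-divisible-tail, so $v_F^p F$ has a $p$-divisible quotient (coming from $v_FF$) and hence is not $p$-antiregular. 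The engine of Example \ref{PZ1} then runs: since the residue field of $v_F$ is $t$-henselian with non-divisible henselian value group after elementary extension, $v_F^p F$ has no non-$p$-divisible $p$-regular quotient, so by Lemma \ref{Delta} the subgroup $\Delta_0$ is $\emptyset$-definable in $v_F^p F$, yielding a non-trivial $\emptyset$-definable coarsening $w$ of $v_F^p$ with $p$-divisible value group; since $v_F$ also has $p$-divisible value group and $w$ is the finest $\emptyset$-definable coarsening with this property, $w$ coincides with (or is a coarsening of, hence equals, or refines to) $v_F$. In any case we obtain that $v_F$ itself is $\emptyset$-definable, and restricting to $K$ gives the conclusion.

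The main obstacle, and the step I would spend the most care on, is the descent: verifying that the non-$p$-antiregularity of $v_L^p L$ (an elementary statement about the value group, obtained at the level of the elementary extension $L$ of $Kv_K$) actually forces $v_F^p F$ to have a non-$p$-divisible $p$-regular — in fact $p$-divisible — quotient corresponding exactly to $v_F$, so that Lemma \ref{Delta} pins down $v_F$ and not merely some coarsening. Concretely I expect to argue: $v_F^p F = v_F F \,\oplus\, \Gamma'$ (as ordered abelian groups, lexicographically, with $\Gamma'$ the value group of the induced $p$-henselian valuation on the residue field $Fv_F = Kv_K$, viewed inside $L$), where $v_FF$ is divisible and $\Gamma'$ is $p$-antiregular by the choice of $L$ and the structure of the canonical $p$-henselian valuation on a $t$-henselian-but-non-henselian field (as in Example \ref{PZ}); then $v_FF$ is literally the unique $p$-regular quotient of $v_F^p F$, it is $p$-divisible, and Lemma \ref{Delta}(2) applies to produce it $\emptyset$-definably. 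Since $v_F^p$ is $\emptyset$-definable by Theorem \ref{MT}, composing gives a $\emptyset$-definable presentation of $v_F$; as $v_F$ is henselian (being the canonical henselian valuation) and restricts to a non-trivial henselian valuation on $K$ (here one uses $v_KK$ non-trivial, which holds since $K$ is henselian), this completes the proof. I would present this last paragraph essentially as a direct transcription of the Example \ref{PZ1} argument, flagging only the point where elementary equivalence is invoked to move the antiregularity hypothesis between $Kv_K$ and $L$.
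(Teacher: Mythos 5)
Your overall strategy --- running the argument of Example \ref{PZ1} directly on $K$ so as to pin down $v_K$ itself via Lemma \ref{Delta} --- is viable, but the step you yourself flag as the ``main obstacle'' is a genuine gap, and the justification you sketch for it does not work. Everything hinges on the claim that $\Gamma'$, the value group of the canonical $p$-henselian valuation induced on the residue field $Fv_F$, is $p$-antiregular: only then is $\Gamma'$ the \emph{unique} convex subgroup of $v_F^pF$ with $p$-divisible quotient, so that $\Delta_0=\Gamma'$ and Lemma \ref{Delta} recovers exactly $v_F$. If $\Gamma'$ had a non-trivial $p$-divisible quotient, then $\Delta_0$ would be a proper subgroup of $\Gamma'$, the resulting $\emptyset$-definable valuation would properly refine $v_F$, and --- since $Fv_F$ is not separably closed, so $v_F$ is the finest henselian valuation on $F$ --- it would not be henselian; your ``in any case we obtain that $v_F$ itself is $\emptyset$-definable'' glosses over precisely this failure mode. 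You justify the $p$-antiregularity of $\Gamma'$ by appeal to ``the structure of the canonical $p$-henselian valuation on a $t$-henselian-but-non-henselian field (as in Example \ref{PZ})'', but Example \ref{PZ} is a particular construction in which antiregularity is checked by hand; there is no such general structure theorem. What is actually needed is the chain: $L$ is not separably closed (as $v_LL\neq pv_LL$), hence $Kv_K$ is $t$-henselian but not henselian, hence $L$ admits no $\emptyset$-definable non-trivial henselian valuation, hence by the contrapositive of Theorem \ref{1} the group $v_LL$ is $p$-antiregular; then a case distinction on whether $v_L^p$ coarsens or refines $v_L$ (after adjoining $\zeta_p$) shows $v_L^pL$ is $p$-antiregular; and finally this transfers to $\Gamma'$ via the uniform definability of Theorem \ref{MT} and $Fv_F\equiv L(\zeta_p)$. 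None of this appears in your proposal. Relatedly, your assertion that ``the key point is that $v_L^p$ has value group that is not $p$-antiregular'' and that this is ``where hypothesis (1) is used'' has the roles reversed: the non-$p$-antiregularity of $v_F^pF$ is free (it has the non-trivial divisible quotient $v_FF$), whereas hypothesis (1) is needed to make the \emph{bottom} layer $\Gamma'$ $p$-antiregular.

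For comparison, the paper sidesteps the descent entirely: it applies Theorem \ref{1} to $L$ to get that $v_LL$ is $p$-antiregular, forms $M:=L((v_KK))$ with the power series valuation $w$, notes $(K,v_K)\equiv(M,w)$ by Ax--Kochen/Ershov, observes that $v_MM\equiv v_KK\oplus v_LL$ is neither $p$-divisible nor $p$-antiregular, and concludes from Theorem \ref{1} that $M$ --- and hence $K$ --- admits a $\emptyset$-definable non-trivial henselian valuation. If you want to keep your direct approach you must supply the deductions listed above; otherwise the transfer to $L((v_KK))$ is the cleaner route.
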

\begin{proof}
Let $K$ be a henselian field such that $Kv_K \prec L$ for some henselian $L$ 
with $v_LL$ non-divisible. Fix a prime $p$ with 
$v_LL \neq pv_LL$. 
Then, in particular, $L$ is not separably closed
and hence neither are $Kv_K$ nor $K$.
Since $Kv_K$ is $t$-henselian but not henselian, $L$ admits no 
$\emptyset$-definable non-trivial henselian valuation. Thus,
by Theorem \ref{1}, $v_LL$ is $p$-antiregular.

Consider the field $M := L((v_KK))$ with the power series valuation $w$. 
By the Ax-Kochen/Ersov Theorem \cite[Theorem 4.6.4]{PD}, 
we have $$(K,v_K) \equiv (M,w).$$
Note that $v_MM \equiv v_KK \oplus v_LL$ holds 
(with the sum ordered  
lexicographically). Therefore, 
$v_MM$ is not $p$-divisible and not $p$-antiregular.
Hence, $M$ admits a $\emptyset$-definable non-trivial henselian valuation
by Theorem \ref{1}. Thus, $K$ also admits a $\emptyset$-definable 
non-trivial henselian valuation.
\end{proof}

It would be very interesting to have a complete
classification for the existence of non-trivial
$\emptyset$-definable henselian valuation. A necessary condition is that
any elementarily equivalent field also admits a non-trivial henselian
valuation. We now ask whether this condition is also sufficient:
\begin{Q}
Let $K$ be a henselian field such that any $L \equiv K$ is henselian. 
Does $K$ admit a non-trivial $\emptyset$-definable henselian valuation? 
\end{Q}
It follows immediately from the Corollaries \ref{cor:fin} and 
\ref{cor:small} 
that if $K$ is a
henselian field of finite transcendence degree or which has a small absolute
Galois group such that additionally 
$(\mathrm{char}(K),\mathrm{char}(Kv_K))=(0,0)$ holds, 
the answer to this question is 
positive.

\section{Definitions with parameters} \label{sec:def} \label{sec:1}
{
\renewcommand{\theTh}{B}
\begin{Thm} 
Let $K \neq K^\mathrm{sep}$ be a henselian field. Then $K$ admits a
definable non-trivial henselian valuation (using at most $1$ parameter) 
unless
\begin{enumerate}
\item $Kv_K \neq Kv_K^\mathrm{sep}$ \emph{and}
\item $Kv_K \prec L$ for some henselian $L$ with $v_LL$
divisible \emph{and}
\item $v_KK$ is divisible. 
\end{enumerate}
\end{Thm}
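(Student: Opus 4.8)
The plan is to run the same case split as in the proof of Theorem~\ref{1}, feeding in Proposition~\ref{nondiv} for the genuinely parameter‑consuming branch. If $Kv_K = Kv_K^{\mathrm{sep}}$, then $K$ carries a $\emptyset$‑definable henselian valuation by \cite[Theorem~3.10]{JK14a}; if $v_KK$ is not divisible, Proposition~\ref{nondiv} applied to $v_K$ yields a non‑trivial henselian coarsening of $v_K$ definable with one parameter; and if $Kv_K$ is not $t$‑henselian, \cite[Proposition~5.5]{FJ14} gives a $\emptyset$‑definable henselian valuation. So we may assume $Kv_K \neq Kv_K^{\mathrm{sep}}$, $v_KK$ divisible and $Kv_K$ $t$‑henselian, the remaining failure of the `unless' clause being that \emph{every} henselian $L \succ Kv_K$ has non‑divisible value group. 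Under these hypotheses $K$ admits no henselian valuation with separably closed residue field, so $v_K$ is the finest henselian valuation on $K$; consequently $Kv_K$ itself is not henselian, since a non‑trivial henselian valuation on $Kv_K$ would compose with $v_K$ to a henselian refinement of $v_K$.

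For this core case, fix a henselian $L$ with $Kv_K \prec L$ (such $L$ exists as $Kv_K$ is $t$‑henselian) and a prime $q$ with $v_LL \neq q\,v_LL$, which exists by assumption. I claim $v_LL$ is $q$‑antiregular. The point is that $L$ cannot carry a $\emptyset$‑definable non‑trivial henselian valuation: if $\phi$ were such a formula, then ``$\phi$ defines a non‑trivial valuation ring'' together with the \emph{parameter‑free} scheme expressing Hensel's Lemma for $\phi$ would transfer from $L$ to $Kv_K$ (as $Kv_K \equiv L$), making $Kv_K$ henselian, contrary to the previous paragraph. Hence Theorem~\ref{1} applied to $L$ forces $v_LL$ to be $q$‑antiregular. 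Since the canonical $q$‑henselian valuation is comparable to $v_L$ (both are $q$‑henselian), $v_L^qL$ is either a non‑trivial quotient of $v_LL$ or has $v_LL$ as a non‑trivial quotient; in either case it is $q$‑antiregular (quotients of $q$‑antiregular groups, and groups having a $q$‑antiregular quotient, are $q$‑antiregular). As the canonical $q$‑henselian valuation is uniformly $\emptyset$‑definable (Theorem~\ref{MT}) and $q$‑antiregularity is elementary, $v^q_{Kv_K}(Kv_K)$ is $q$‑antiregular and non‑trivial; in particular $Kv_K$ is $q$‑henselian.

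This now lifts to $K$. Since $Kv_K$ is $q$‑henselian, the composite of $v_K$ with $v^q_{Kv_K}$ is $q$‑henselian and properly refines $v_K$, so $v_K$ is not the finest $q$‑henselian valuation on $K$; hence $v_K^q$ properly refines $v_K$, is non‑trivial, and is $\emptyset$‑definable by Theorem~\ref{MT}. Put $\Gamma := v_K^qK$ and let $C \le \Gamma$ be the convex subgroup with $\Gamma/C \cong v_KK$ (divisible); $C$ is the value group of the $q$‑henselian valuation $v_K^q$ induces on $Kv_K$, which is comparable to $v^q_{Kv_K}$, so $C$ is $q$‑antiregular. Apply Lemma~\ref{Delta} to $(\Gamma,q)$: one has $\Gamma \neq q\Gamma$ (as $C \neq qC$ and $\Gamma/C$ is torsion‑free), and every $q$‑regular quotient of $\Gamma$ is $q$‑divisible — for a convex $\Delta \supseteq C$ the quotient is a quotient of the divisible $v_KK$, while for $\Delta \subsetneq C$ the group $C/\Delta$ has rank $\ge 2$ (a $q$‑antiregular group has no rank‑$1$ quotient), so $\Gamma/\Delta$ has a proper quotient $\Gamma/\Delta'$ with $\Delta \subsetneq \Delta' \subsetneq C$ that is not $q$‑divisible (because $C/\Delta'$ is not, $C$ being $q$‑antiregular), whence $\Gamma/\Delta$ is not $q$‑regular. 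Thus Lemma~\ref{Delta} gives an $\emptyset$‑definable convex subgroup $\Delta_0 \le \Gamma$, and the same analysis identifies $\Delta_0 = C$. Therefore the coarsening of $v_K^q$ by the $\emptyset$‑definable convex subgroup $\Delta_0 = C$ is precisely $v_K$, a non‑trivial $\emptyset$‑definable henselian valuation — so in this case $K$ even admits a parameter‑free such. (Whenever the relevant root of unity — or $\sqrt{-1}$ for $q=2$ in equicharacteristic $0$ — is missing, one replaces $K$ by the $\emptyset$‑interpretable finite extension adjoining it, uses Lemma~\ref{infty} to keep value groups $q$‑antiregular, and restricts the resulting valuation back to $K$.)

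The main obstacle is the core case, and within it the step feeding Theorem~\ref{1} back through the relation $Kv_K \prec L$: one must exploit that, although ``admits a non‑trivial henselian valuation'' is not an elementary property of a field, ``the valuation ring defined by a \emph{parameter‑free} formula $\phi$ is henselian'' is captured by a parameter‑free elementary scheme and hence does descend to elementary substructures — this is exactly what forbids a $\emptyset$‑definable henselian valuation on $L$ and thereby forces $v_LL$, and then $v^q_{Kv_K}(Kv_K)$, to be $q$‑antiregular. The secondary delicacy is that precisely the stability of $q$‑antiregular ordered abelian groups under quotients and under passing to groups with a $q$‑antiregular quotient is what makes the hypotheses of Lemma~\ref{Delta} hold for $v_K^qK$ and pins $\Delta_0$ down to the convex subgroup cutting out $v_K$; the root‑of‑unity bookkeeping is routine but must be threaded through consistently.
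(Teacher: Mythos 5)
Your proposal is correct, and its outer case split coincides with the paper's: Theorem \ref{1} disposes of the cases $Kv_K=Kv_K^{\mathrm{sep}}$, $Kv_K$ not $t$-henselian, and $v_KK$ not everywhere $p$-antiregular, while Proposition \ref{nondiv} handles non-divisible $v_KK$ and is the only branch that consumes the parameter. Where you genuinely diverge is the core case in which every henselian $L\succ Kv_K$ has non-divisible value group. The paper settles this by citing Proposition \ref{1.1}, i.e.\ by an Ax--Kochen/Ershov argument replacing $(K,v_K)$ with $(L((v_KK)),w)$ and then applying Theorem \ref{1} to that model; note that Proposition \ref{1.1} is stated only for $\mathrm{char}(Kv_K)=0$, a hypothesis the paper's proof of the present theorem does not verify. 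You instead argue directly on $K$: you transfer the $q$-antiregularity of $v_LL$ (forced by Theorem \ref{1} applied to $L$) down to $v^q_{Kv_K}(Kv_K)$ through the uniform $\emptyset$-definability of the canonical $q$-henselian valuation (Theorem \ref{MT}), lift to $v_K^q$, and use Lemma \ref{Delta} to define exactly the convex subgroup $C$ cutting out $v_K$ --- in effect the mechanism of Example \ref{ex:anti} run in general. This buys you a stronger conclusion ($v_K$ itself is $\emptyset$-definable) and avoids the AKE step together with its residue-characteristic restriction. Two points are left at roughly the paper's own level of rigour and deserve a sentence each: the reduction to $\zeta_q\in K$ (with the induced finite-index changes to value groups controlled by Lemma \ref{infty} and the passage to $Fw\prec L(\zeta_q)$ on residue fields), and the assertion that $v_K^q$ properly refines $v_K$, which in the case that $K$ carries a $q$-henselian valuation with $q$-closed residue field additionally requires $Kv_K\neq Kv_K(q)$ --- this does hold, since $Kv_K$ carries a valuation with non-$q$-divisible value group and hence Kummer (resp.\ Artin--Schreier) theory yields a degree-$q$ Galois extension, but it is not spelled out.
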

\addtocounter{Th}{-1}
}
\begin{proof} 
Let $K \neq K^\mathrm{sep}$ be a henselian field.
By Theorem \ref{1}, we get an $\emptyset$-definable non-trivial 
henselian valuation on $K$
unless we have $Kv_K \neq Kv_K^\mathrm{sep}$, $Kv_K$ $t$-henselian, $v_KK$
is $p$-antiregular for all primes $p$ with
$v_KK \neq p\cdot v_KK$.

If $Kv_K$ is $t$-henselian, then there is some henselian $L \succ Kv_K$
by \cite[Lemma 3.3]{PZ78}. In case there is some such $L$ with $Lv_L$ non-divisible,
then $K$ again admits a non-trivial $\emptyset$-definable henselian valuation
by Proposition \ref{1.1}.

If $v_KK$ is not $p$-divisible for some prime $p$, 
some non-trivial (henselian) coarsening of $v_K$ is definable 
using at most $1$ parameter 
by Proposition \ref{nondiv}.

Thus, if one of the conditions $(1)$-$(3)$ fails for $K$, then $K$ admits
a definable non-trivial henselian valuation (using at most $1$ parameter).
\end{proof}

In equicharacteristic $0$, 
we can show a full converse to Theorem \ref{2}:
\begin{Cor} \label{0,0}
Let $K \neq K^\mathrm{sep}$ be a henselian field 
with $\mathrm{char}(Kv_K)=0$. Then $K$ admits a
definable non-trivial henselian valuation 
\emph{if and only if not}
\begin{enumerate}
\item $Kv_K \neq Kv_K^\mathrm{sep}$ \emph{and}
\item $Kv_K\prec L$ for some henselian $L$ with $v_LL$
divisible \emph{and}
\item $v_KK$ is divisible. 
\end{enumerate}
\end{Cor}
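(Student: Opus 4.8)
The plan is to derive Corollary \ref{0,0} from Theorem \ref{2} by showing that, in equicharacteristic $0$, the three conditions listed in the corollary are equivalent to the three conditions in Theorem \ref{2}. The forward direction (``if one of the conditions fails, then $K$ admits a definable henselian valuation'') is immediate: conditions $(1)$ and $(3)$ are literally the same in both statements, and condition $(2)$ in the corollary is precisely condition $(2)$ in Theorem \ref{2}, so whenever the negation of the conjunction holds, Theorem \ref{2} directly produces a definable non-trivial henselian valuation (using at most one parameter). So the content is entirely in the converse: I must show that if all three conditions $(1)$--$(3)$ of the corollary hold, then $K$ admits \emph{no} definable non-trivial henselian valuation.

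First I would record that under conditions $(1)$--$(3)$, since $Kv_K \prec L$ for some henselian $L$, the residue field $Kv_K$ is $t$-henselian; combined with $(1)$ this says $Kv_K$ is $t$-henselian but not separably closed, and $(3)$ says $v_KK$ is divisible. These are exactly the three conditions appearing in the ``unless'' clause of Theorem \ref{2} (with the $\prec L$, $v_LL$ divisible clause now playing the role formerly held by ``$t$-henselian''), so Theorem \ref{2} gives us \emph{no information} — it only guarantees the \emph{existence} of a definable valuation when one of the conditions fails. The work is therefore to prove a genuine non-definability statement, and here is where the equicharacteristic $0$ hypothesis $\mathrm{char}(Kv_K)=0$ (hence $\mathrm{char}(K)=0$) will be essential: it allows a clean application of the Ax--Kochen/Ershov machinery.

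The key step will be an Ax--Kochen/Ershov argument showing that $K$ is elementarily equivalent to a field on which \emph{no} valuation — let alone a definable one — with the relevant properties can be pinned down. More precisely, by condition $(2)$ pick a henselian $L \succ Kv_K$ with $v_LL$ divisible; we may further arrange (again using Theorem \ref{2}, or by a direct Ax--Kochen/Ershov construction as in the proof of Corollary \ref{cor:small}) that $v_LL$ is divisible and $Lv_L$ is itself $t$-henselian but not separably closed, iterating if necessary. Then, using $\mathrm{char}(Kv_K)=0$ and the Ax--Kochen/Ershov Theorem \cite[Theorem 4.6.4]{PD} repeatedly, I would establish a chain of elementary equivalences of the shape
$$K \equiv Kv_K((\mathbb{Q})) \equiv L((\mathbb{Q})) \equiv Lv_L((\mathbb{Q}))((\mathbb{Q})) \equiv Lv_L((\mathbb{Q})) \equiv \dotsb,$$
exhibiting $K$ as elementarily equivalent to power series fields with ``too many'' iterated henselian valuations, so that any putative $\mathcal{L}_\mathrm{ring}$-formula (with or without parameters) defining a henselian valuation would have to define incompatible valuations in an elementary extension — contradicting that a definable set is preserved under elementary equivalence. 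The divisibility of all the value groups involved is what prevents any of the constructions from Sections \ref{sec:anti} and \ref{sec:AEJ} from breaking the symmetry, and the fact that $Kv_K \neq Kv_K^\mathrm{sep}$ keeps the residue fields non-separably-closed so that the canonical henselian valuation remains the coarsest one with separably closed residue field at each stage.

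The main obstacle I anticipate is making the ``incompatible valuations'' contradiction fully rigorous: one must argue that in the iterated power series field $Lv_L((\mathbb{Q}))((\mathbb{Q}))$ the two ``copies'' of the power series valuation are conjugate by an $\mathcal{L}_\mathrm{ring}$-automorphism (or at least that no $\emptyset$-definable — respectively no parameter-definable, after passing to a suitably saturated elementary extension and moving the parameter — henselian valuation can distinguish them), exactly as in Example \ref{two} and the proof of Corollary \ref{cor:small}. For the parameter case one additionally has to handle the parameter, which is done by passing to an $\aleph_1$-saturated elementary extension, noting the parameter lies in some iterated power series subfield, and applying an automorphism fixing a large enough initial segment; this is routine but needs to be spelled out. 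Granting these standard manipulations, the converse follows and the corollary is proved.
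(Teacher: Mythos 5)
Your reduction of the corollary to its converse direction is right, and your identification of the forward direction with Theorem \ref{2} is correct. But the heart of the matter---proving that \emph{no} parameter-definable non-trivial henselian valuation exists when (1)--(3) hold---is exactly where your argument has a genuine gap. The elementary-equivalence chain $K \equiv Kv_K((\mathbb{Q})) \equiv L((\mathbb{Q})) \equiv Lv_L((\mathbb{Q}))((\mathbb{Q}))$ only rules out $\emptyset$-definable valuations (a parameter-free formula transfers along $\equiv$; a formula with parameters does not), and this much is already contained in Theorem \ref{1} together with Proposition \ref{1.1}. Your proposed fix for parameters---conjugating the two power-series valuations on $Lv_L((\mathbb{Q}))((\mathbb{Q}))$ by an automorphism fixing the parameter---is not ``routine,'' and I do not see how to make it work: the two valuations have non-isomorphic residue fields ($Lv_L((\mathbb{Q}))$ versus $Lv_L$), so they are not conjugate by any field automorphism at all, and in a saturated elementary extension the existence of an automorphism fixing an arbitrary parameter while moving a given henselian valuation ring is precisely what needs to be proved, not an input. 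The obstruction is that, a priori, the field could induce extra structure on its value group under which some proper convex subgroup becomes definable with parameters.

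The paper closes this gap with a tool you do not invoke: relative quantifier elimination in the Denef--Pas language (\cite[Theorem 4.1]{Pas}). One first shows (via Proposition \ref{1.1} and Ax--Kochen/Ershov) that $v_FF$ is divisible for \emph{every} henselian $F \succ Kv_K$, then passes to an $\aleph_0$-saturated elementary extension $(M,v)$ of $(K,v_K)$ in $\mathcal{L}_{ring}\cup\{\mathcal{O}\}$, so that $Mv$ is henselian, $v_M$ properly refines $v$, and $v_MM$ is divisible of residue characteristic $0$; after a further extension admitting an angular component map, any definable coarsening of $v_K$ would yield an $\mathcal{L}_{ring}$-definable proper non-trivial convex subgroup of the divisible value group $wN$. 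Pas's theorem says every $N$-definable subset of $wN$ is already definable in the pure ordered abelian group $wN$ with parameters from $wN$, and a divisible ordered abelian group has no definable proper non-trivial convex subgroup. This is the step your automorphism sketch is implicitly trying to substitute for, and without it (or an equivalent stable-embeddedness statement for the value group) the converse direction is not established.
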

\begin{proof}
Let $K \neq K^\mathrm{sep}$ be a henselian field 
with $\mathrm{char}(Kv_K)=0$ such that we have 
\begin{enumerate}
\item $Kv_K \neq Kv_K^\mathrm{sep}$ \emph{and}
\item $Kv_K\prec L$ for some henselian $L$ with $v_LL$
divisible \emph{and}
\item $v_KK$ is divisible. 
\end{enumerate}
We need to show that $K$ admits no definable non-trivial henselian valuation.
The key argument of the proof is relative quantifier 
elimination in the Denef-Pas-language, however, we first need to do some work
to set the situation up.

Since we have $Kv_K \neq Kv_K^{sep}$, 
any henselian valuation is a coarsening of $v_K$. Take $L \succ Kv_K$
with $v_LL$ divisible. Note that as the extension $Kv_K \subset L$ is 
regular, the restriction of $v_L$ to $Kv_K$ is henselian and hence trivial.
Thus, we also get $\mathrm{char}(Lv_L)=0$.

We claim that we have $v_FF$ divisible for \emph{all} $F \succ Kv_K$.
Note that as $Kv_K$
is $t$-henselian but not henselian, no field elementarily equivalent to $Kv_K$
can admit a non-trivial $\emptyset$-definable henselian valuation. 
Furthermore, the Ax-Kochen/Ersov Theorem (\cite[Theorem 4.6.4]{PD}) implies 
$$Kv_K \equiv L \equiv Lv_L((\mathbb{Q})) \equiv Lv_L((\mathbb{Q}))((\mathbb{Q})
\equiv L((\mathbb{Q})) \equiv Kv_K((\mathbb{Q})).$$
Now, if there was some $F \succ Kv_K$ with $v_FF$ non-divisible, then --
by Proposition \ref{1.1} --
$Kv_K((\mathbb{Q}))$
would admit a $\emptyset$-definable
non-trivial henselian valuation, a contradiction. This proves the claim.

Consider an $\aleph_0$-saturated elementary extension $(M,v)$ of $(K,v_K)$
in $\mathcal{L}_{val}=\mathcal{L}_{ring}\cup \{\mathcal{O}\}$, where 
$\mathcal{O}$ is a unary predicate which is interpreted as the valuation ring.
Then, $vM$ is a divisible ordered abelian group and 
$F:=Mv$ is an $\aleph_0$-saturated 
elementary extension of $Kv_K$ in $\mathcal{L}_{ring}$ 
and thus henselian (\cite[Lemma 3.3]{PZ78}). In particular, $v_F$ is non-trivial
and hence $v_M$ is a proper refinement of $v$, namely the composition of $v$ 
and $v_F$. By the claim, we get that
$v_FF$ is divisible (and thus also $v_MM$). Note that the restriction of $v_M$ 
to $Kv_K$ is trivial, thus we get $\mathrm{char}(Mv_M)=0$.

We want to consider $(M,v_M)$ as a structure in the 
Denef-Pas-language $\mathcal{L}_{DP}$ 
which is an extension of $\mathcal{L}_{ring}$ (see \cite{Pas} for details).
A valued field $(N,w)$ can be made into an 
$\mathcal{L}_{DP}$-structure if and only if
there is a multiplicative map $\mathrm{ac}:N\to Nw$ with $\mathrm{ac}(0)=0$
and which coincides on $\mathcal{O}_w^\times$ with the residue map.
If there is no such map for $(M,v_M)$, there is an
$\mathcal{L}_{val}$-elementary
extension $(M,v_M) \prec (N,w)$ 
such that 
$(N,w)$ can be considered as an $\mathcal{L}_{DP}$-structure since
$(M,v_M)$ is henselian of equicharacteristic $0$. In particular,
we have $\mathrm{char}(Nw)=0$ and $wN$ divisible.

Assume for a contradiction that $K$ admits a definable non-trivial
henselian valuation, i.e., that some non-trivial coarsening of $v_K$
is definable. Then, via the elementary embedding
$$(K,v_K) \prec (M,v),$$ some non-trivial
coarsening of $v$ is $\mathcal{L}_{ring}$-definable on $M$ 
(using the same $\mathcal{L}_{ring}$-formula and the same parameters from 
$K\subseteq M$). 
Thus, some proper coarsening of
$v_M$ is $\mathcal{L}_{ring}$-definable in the henselian valued field 
$(M,v_M)$. Furthermore,
via the elementary embedding
$$(M,v_M) \prec (N,w),$$
some proper coarsening of
$w$ is $\mathcal{L}_{ring}$-definable in $N$.

In particular, this induces a definition of 
a proper, non-trivial convex subgroup of the divisible ordered abelian group 
$wN$.
 
Note that we have $\mathrm{char}(Nw)=0$. 
By the relative quantifier elimination
result in $\mathcal{L}_{DP}$
the following holds in a henselian
valued field $(N,w)$ of equicharacteristic $0$ (see \cite[Theorem 4.1]{Pas}): 
Any 
$\mathcal{L}_{Pas}$-definable subset of
$wN$ (using parameters from $N$) 
is already definable in the ordered abelian group $wN$ (using parameters from
$wN$). 
However, in a 
divisible ordered abelian group (like $wN$), there can be 
no proper, non-trivial convex definable subgroups. Hence, no non-trivial
proper coarsening of $w$ is definable on $N$ and thus there can be no
non-trivial definable henselian valuation on $K$. 
\end{proof}

\begin{Ex} \emph{A henselian field which does not admit
any non-trivial definable henselian valuation:} 
Refining the construction by Prestel and Ziegler as \label{FJ}
repeated in Example \ref{PZ}, one can construct a $t$-henselian
non-henselian field $k$ of characteristic $0$
with $k \neq k^{sep}$ and $G_k$ small (see \cite[Construction 6.5 and Proposition 6.7]{FJ14}).
By \cite[Proposition 5.8]{FJ14}, $v_LL$ is divisible 
for any henselian $L\succ k$. Consider the field $K:=k((\mathbb{Q}))$.
By Corollary \ref{0,0}, $K$ does not admit a non-trivial definable
henselian valuation.
\end{Ex}

\begin{Ex} \emph{A field $L$ admitting a non-trivial 
definable henselian valuation such that there is some 
non-henselian $K \equiv L$:} 
Consider the field $K$ as constructed in Example \ref{PZ}, so $K$
is $t$-henselian but not henselian,
$v_K^2$ is $\emptyset$-definable and 
has an antiregular value group. 

By \cite[Lemma 3.3]{PZ78}, there is some elementary extension $L \succ K$
such that $L$ is henselian. We claim that the canonical
 henselian valuation $v_L$
on $L$ has a non-divisible value group.
By Theorem \ref{MT}, $v_L^2$ and $v_K^2$ are defined by the same parameter-free
formula. As antiregularity is an elementary property of an
ordered abelian group, $v_L^2L$ is also antiregular. 
Since $v_LL$ is a quotient of $v_L^2L$, it cannot be $p$-divisible for any prime
$p$.

Thus, by Theorem \ref{2}, $L$ admits a non-trivial definable henselian
valuation. Since we have $L \equiv K$ and $K$ is $t$-henselian but not 
henselian, $L$ does not admit any non-trivial $\emptyset$-definable
henselian valuation.
\end{Ex}

\section{Tamely branching ($p$-)henselian valuations}
In this section, we study ($p$-)henselian valuations tamely branching at $p$. 
In the first part, 
we show that every field which admits a $p$-henselian 
valuation
tamely branching at $p$ admits a $\emptyset$-definable such and draw
some Galois-theoretic conclusions.
In the second part, we show that every field which admits 
a henselian valuation
tamely branching at $p$ admits a definable such, however, in general, 
parameters are required for the definition. We conclude that admitting a tamely
branching henselian valuation is not an elementary property in 
$\mathcal{L}_\textrm{ring}$, which has again 
some Galois-theoretic consequences. 

First, we recall the definition of tamely branching valuations. 
\label{sec:tame}
\begin{Def} 
Let $(K,v)$ be a valued field and $p$ a prime. 
We call $v$ \emph{tamely branching} at $p$ if
\begin{enumerate}
\item $\mathrm{char}(Kv) \neq p$ and
\item $vK$ is not $p$-divisible and 
\item if $[vK:pvK] = p$, then $Kv$ has a finite separable field extension
of degree divisible by $p^2$.
\end{enumerate}
\end{Def}

\subsection{Defining tamely branching $p$-henselian valuations}
We first consider the problem of defining $p$-henselian valuations
tamely branching at $p$. 
The existence of these valuations are encoded in the maximal pro-$p$ quotient
of the absolute Galois group of a field, as described
by following
\begin{Thm}[Engler, Koenigsmann and Nogueira; 
Theorem 2.15 in \cite{Koe03}] \label{EKN}
Let $p$ be a prime, $K$ a field containing a primitive $p$th root
of unity (in particular $\mathrm{char}(K)\neq p$) and assume
$G_K(p) \not\cong \mathbb{Z}_p$ and, if $p=2$, also
$G_K(p) \not\cong \mathbb{Z}/2\mathbb{Z}$ or $\mathbb{Z}_2 \ltimes
\mathbb{Z}/2\mathbb{Z}$. Then $K$ admits a $p$-henselian valuation tamely
branching at $p$ iff $G_K(p)$ has a non-trivial normal abelian subgroup.
\end{Thm}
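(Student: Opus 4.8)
The plan is to prove the two directions separately. The forward direction — that a $p$-henselian valuation tamely branching at $p$ produces a non-trivial closed normal abelian subgroup of $G_K(p)$ — is short and purely ramification-theoretic. Given such a valuation $v$, we have $\mathrm{char}(Kv)\neq p$ and $vK\neq pvK$, and $\zeta_p\in K$ by hypothesis; since $v$ is $p$-henselian it prolongs uniquely to $K(p)$, so $G_K(p)=\mathrm{Gal}(K(p)/K)$ acts on this prolongation. Let $K^{\mathrm{ur}}\subseteq K(p)$ be the maximal unramified subextension; it is Galois over $K$ with $\mathrm{Gal}(K^{\mathrm{ur}}/K)\cong G_{Kv}(p)$, and the inertia group $I:=\mathrm{Gal}(K(p)/K^{\mathrm{ur}})$ is closed and normal in $G_K(p)$. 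As $\mathrm{char}(Kv)\neq p$, all ramification in $K(p)/K^{\mathrm{ur}}$ is tame, so $I$ is abelian, canonically isomorphic to $\mathrm{Hom}\big(v(K(p))/vK,\mu_{p^\infty}\big)$, where $\mu_{p^\infty}$ denotes the group of $p$-power roots of unity; since $vK\neq pvK$ this Hom-group is non-trivial, hence $I\neq\{1\}$, and $I$ witnesses the claim. (Condition~(3) of tamely branching is not used in this direction; it is there to make the converse hold.)

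For the converse, suppose $G:=G_K(p)$ is none of the excluded ``small'' groups and carries a non-trivial closed normal abelian subgroup. First I would pass to a \emph{maximal} closed normal abelian subgroup $A\le G$ (one exists, since the closure of a chain of normal abelian subgroups is again normal abelian). Using Kummer theory — here $\zeta_p\in K$ identifies the Frattini quotient of $G$ with the $\mathbb{F}_p$-dual of $K^\times/(K^\times)^p$ — the subgroup $A$ corresponds to a distinguished subgroup $H\le K^\times/(K^\times)^p$, and the normality and maximality of $A$ translate into the statement that the elements of $K$ whose classes lie in $H$ are \emph{$p$-rigid}. This supply of rigid elements is then fed into the construction of valuation rings from subgroups of $K^\times$ in the style of Ware and of Arason--Elman--Jacob (cf.\ \cite{AEJ87} and the discussion in Section~\ref{sec:AEJ}), producing a non-trivial valuation ring $\mathcal{O}_w\subseteq K$; the rigidity property forces $w$ to be $p$-henselian, with inertia group in $G_K(p)$ equal, up to finite index, to $A$. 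It then remains to verify that $w$ is tamely branching at $p$. One has $\mathrm{char}(Kw)\neq p$, since a residue characteristic $p$ would contribute a wild, generally non-abelian, part of $G_K(p)$ incompatible with its being built from the abelian $A$; one has $wK\neq pwK$ because $I_w\neq\{1\}$; and in the remaining case $[wK:pwK]=p$, where $G_K(p)\cong\mathbb{Z}_p\rtimes G_{Kw}(p)$ is a split inertia--residue decomposition, the exclusion of $\mathbb{Z}_p$ (and, for $p=2$, of the further small pro-$2$ groups listed in the hypothesis) rules out exactly the configurations in which $G_{Kw}(p)$ is too small, forcing $G_{Kw}(p)$ — hence $G_{Kw}$ — to have an open subgroup of index divisible by $p^2$, i.e.\ $Kw$ to have a finite separable extension of degree divisible by $p^2$.

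The step I expect to be the main obstacle is the production of an \emph{actual} valuation on $K$ out of the purely group-theoretic hypothesis: this is the core of the Galois characterisation of $p$-henselianity and is in no way formal. I would organise it by first handling the case $A=G$, where $G_K(p)$ is abelian, the field $K$ is severely constrained, and the desired valuation can be read off essentially from Kummer theory, and then treating the general case, where one must show that the maximality of $A$ forces $A$ to have the formal shape of an inertia group so that the rigid-element machinery applies. A secondary, bookkeeping-heavy point is the verification of condition~(3): one has to track the residue field $Kw$ and its pro-$p$ Galois group carefully through the construction, and it is exactly there that the excluded small pro-$p$ groups must be ruled out by hand.
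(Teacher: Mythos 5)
This theorem is not proved in the paper at all: it is quoted as a black box from \cite{Koe03} (Theorem 2.15, due to Engler, Koenigsmann and Nogueira), so there is no in-paper proof to compare your attempt against. Judged on its own merits, your forward direction is correct and essentially complete: the unique prolongation of $v$ to $K(p)$, the maximal unramified subextension $K^{\mathrm{ur}}$, and the inertia group $I=\mathrm{Gal}(K(p)/K^{\mathrm{ur}})$, which is closed and normal, abelian because all ramification in $K(p)/K^{\mathrm{ur}}$ is tame (as $\mathrm{char}(Kv)\neq p$ and $\zeta_{p^n}\in K(p)$ for all $n$), and non-trivial because $v(K(p))/vK$ is the non-zero group $\bigl(\tfrac{1}{p^{\infty}}vK\bigr)/vK$ when $vK\neq pvK$. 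You are also right that condition (3) of tame branching plays no role in this direction.

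The converse is where all the content lies, and there your proposal is an accurate road map of the actual Engler--Koenigsmann/Engler--Nogueira argument rather than a proof: passing to a maximal closed normal abelian subgroup $A$, translating via Kummer theory into a supply of $p$-rigid elements, and feeding these into the Ware/Arason--Elman--Jacob valuation construction are exactly the right ingredients, but each of the implications you assert (``normality and maximality of $A$ translate into rigidity'', ``rigidity forces $w$ to be $p$-henselian with inertia group commensurable with $A$'') is itself a substantial theorem that you acknowledge but do not supply. Beyond that, your verification of condition (3) has a concrete gap: when $[wK:pwK]=p$ and $G_{Kw}(p)$ is procyclic, one has $G_K(p)\cong\mathbb{Z}_p\rtimes G_{Kw}(p)$, and the hypotheses only exclude $\mathbb{Z}_p$ (and, for $p=2$, $\mathbb{Z}/2\mathbb{Z}$ and $\mathbb{Z}_2\ltimes\mathbb{Z}/2\mathbb{Z}$); they do not exclude, for instance, $\mathbb{Z}_p\rtimes\mathbb{Z}_p$. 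For such groups the particular valuation $w$ you construct may genuinely fail (3), and the correct conclusion is not that (3) holds for $w$ but that one must switch to a different $p$-henselian valuation --- typically the canonical one, whose value group then satisfies $[\Gamma:p\Gamma]\geq p^2$ so that (3) holds vacuously. So the strategy is the standard one and the statement is true, but the core of the converse and the case analysis for condition (3) remain unproven as written.
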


We now turn to the definability of such valuations:
\begin{Prop} \label{ptame}
Let $K$ be a field and $p$ a prime such that $\mathrm{char}(K) \neq p$
holds.
Assume that 
we have $\zeta_p \in K$, and furthermore $\sqrt{-1} \in K$ in case $p=2$ and 
$\mathrm{char}(K)=0$.
If $K$
admits a $p$-henselian valuation tamely branching at $p$
then $K$ admits a $\emptyset$-definable such valuation.
\end{Prop}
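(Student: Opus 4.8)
The plan is to exploit Theorem \ref{MT}: under the stated hypotheses $K$ lies in $\mathcal{K}_p$ (since $\zeta_p \in K$, and the extra condition on $\sqrt{-1}$ for $p=2$, $\mathrm{char}(K)=0$ is exactly the membership condition), so the canonical $p$-henselian valuation $v_K^p$ is $\emptyset$-definable, provided $K \neq K(p)$. First I would observe that if $K$ admits a $p$-henselian valuation $v$ tamely branching at $p$, then $vK \neq pvK$, so $K$ has a Galois extension of degree $p$ and hence $K \neq K(p)$; moreover every $p$-henselian valuation on $K$ is comparable to $v_K^p$, and $v$ is in particular a coarsening-or-refinement of $v_K^p$. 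The aim is then to produce a $\emptyset$-definable coarsening of $v_K^p$ which is still tamely branching at $p$.

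The key structural point is that, since $v$ is $p$-henselian, its residue field has a separable extension of degree divisible by $p$ (as $vK \neq pvK$ forces a ramified degree-$p$ Galois extension, or the third clause of tame branching supplies a degree-$p^2$ extension of the residue field), so $v$ is a proper coarsening of $v_K^p$ or equals it; in either case $v_K^p$ itself refines (or equals) $v$, hence $v_K^pK$ surjects onto $vK$ by quotienting by a convex subgroup, and $v_K^pK$ is itself not $p$-divisible. Now I distinguish cases according to whether $v_K^pK$ is $p$-antiregular. If it is \emph{not} $p$-antiregular, then by Proposition \ref{antireg} some non-trivial $\emptyset$-definable coarsening $w$ of $v_K^p$ exists; I would need to check $w$ can be chosen tamely branching at $p$ — either $w$ already has non-$p$-divisible value group and the residue-field condition is inherited, or one passes to a further coarsening. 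If $v_K^pK$ \emph{is} $p$-antiregular, then since $v$ is a coarsening of $v_K^p$ with non-$p$-divisible value group, and $v$ is tamely branching, I would argue that $v$ itself must essentially be $v_K^p$ (a $p$-antiregular group has no non-trivial $p$-divisible quotient, which constrains how much one can coarsen while keeping non-$p$-divisibility), so $v_K^p$ is $\emptyset$-definable by Theorem \ref{MT} and, being comparable to a tamely branching valuation, is itself tamely branching at $p$ up to the residue-field clause.

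The main obstacle I anticipate is the bookkeeping around clause (3) of the definition of tamely branching: when $[vK : pvK] = p$ the tame-branching property requires a degree-$p^2$ separable extension of the residue field, and passing between $v$, $v_K^p$, and $\emptyset$-definable coarsenings can change both the value group's index modulo $p$ and the residue field, so I must verify that whichever valuation I end up defining still satisfies clause (3). I expect the cleanest route is: first reduce to the case $[vK:pvK]=p$ (otherwise the value-group condition alone is robust under the constructions), then show that in this case $v_K^p$ and $v$ have comparable residue fields in a way that transports the degree-$p^2$ extension, using that refining by a convex subgroup only shrinks the value group and that the relevant $p$-extension of the residue field lifts. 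Once the correct valuation is identified as either $v_K^p$ (definable by Theorem \ref{MT}) or the coarsening supplied by Proposition \ref{antireg}, the $\emptyset$-definability is immediate and only the tame-branching verification remains.
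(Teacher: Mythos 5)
Your opening reduction (membership in $\mathcal{K}_p$, hence $\emptyset$-definability of $v_K^p$ via Theorem \ref{MT}, and $K \neq K(p)$ from $vK \neq pvK$) matches the paper, but the structural claim on which everything else rests is false: from $vK \neq pvK$ you infer that $Kv$ has a separable extension of degree divisible by $p$, and hence that $v_K^p$ refines $v$. A ramified degree-$p$ extension of $K$ gives no extension of the residue field $Kv$, and clause (3) of tame branching supplies a degree-$p^2$ extension of $Kv$ only when $[vK:pvK]=p$; if $[vK:pvK]\geq p^2$ one may perfectly well have $Kv = Kv(p)$ (take $K = k((\mathbb{Z}\oplus\mathbb{Z}\oplus\mathbb{Q}))$ with $k$ $p$-closed, $\zeta_p\in k$, $\mathrm{char}(k)\neq p$, and $v$ the full power-series valuation: it is tamely branching at $p$ but \emph{refines} $v_K^p$ properly). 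The paper therefore opens with the case split $Kv = Kv(p)$ versus $Kv \neq Kv(p)$; in the first case it shows directly that $v_K^p$ inherits tame branching (the value group of the valuation induced on $Kv_K^p$ is $p$-divisible, so $[v_K^pK:pv_K^pK]=[vK:pvK]$, and the degree-$p^2$ irreducible polynomial over $Kv$ lifts to $Kv_K^p$). Your argument only ever addresses the second case.

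Within that second case there are further gaps. Proposition \ref{antireg} is stated for \emph{henselian} valuations, and, more seriously, the coarsening it produces may have $p$-divisible value group (that is exactly what its third subcase, via Lemma \ref{Delta}, delivers), so it cannot be tamely branching at $p$; you flag this but do not resolve it, and the paper simply does not route the argument through Proposition \ref{antireg}. You also do not treat the case $\mathrm{char}(Kv_K^p)=p$, which occurs even though $\mathrm{char}(Kv)\neq p$ (e.g.\ $K=\mathbb{Q}_p(\zeta_p)((t))$ with $v$ the $t$-adic valuation); there the paper applies Lemma \ref{Dg} with the \emph{canonical} parameter $\gamma = v_K^p(p)$, the value of the prime $p$ itself, to obtain a $\emptyset$-definable proper coarsening $u$ of $v_K^p$ with $uK\neq puK$, $\mathrm{char}(Ku)\neq p$ and $Ku\neq Ku(p)$, the non-triviality $\Delta_\gamma\neq\Gamma$ being proved from $v(p)=0$. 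Finally, in your $p$-antiregular branch the claim that $v$ ``must essentially be $v_K^p$'' does not follow: $p$-antiregularity forbids non-trivial $p$-divisible quotients but leaves plenty of proper non-$p$-divisible quotients, so $v$ can still be a proper coarsening of $v_K^p$. None of these are cosmetic; each corresponds to a case the paper must, and does, handle separately.
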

\begin{proof}
Let $v$ be a $p$-henselian valuation tamely branching at $p$.
We split the proof into cases:
\begin{enumerate}
\item \emph{If $Kv = Kv(p)$,}
then we have $\mathcal{O}_v \subseteq \mathcal{O}_{v_K^p}$, so
the canonical $p$-henselian valuation $v_K^p$ is also tamely branching at $p$: 
The fact that $\mathrm{char}(Kv_K^p) \neq p$ is immediate. Furthermore,
we have $v_K^pK = vK/\Delta$ where $\Delta$ is the value group
of the valuation $\bar{v}$ induced by $v$ on $Kv_K^p$. Since we have 
$Kv = Kv(p)$, we also get $Kv_K^p = Kv_K^p(p)$ by the definition of the 
canonical henselian valuation. Thus, $\Delta$ is $p$-divisible and --
as $vK$ is not $p$-divisible -- 
$v_K^p$ is not $p$-divisible. Moreover, we get $[vK:pvK] = [v_K^pK:pv_K^pK]$.
Thus, in case $[v_K^pK:pv_K^pK]=p$, $Kv$ admits a finite separable 
extension of degree divisible by $p^2$, say generated by an irreducible 
polynomial 
$f(X)\in Kv[X]$. Any lift of this polynomial to 
$\mathcal{O}_{\bar{v}}[X] \subseteq Kv_K^p[X]$ is still irreducible and separable
 and thus
also generates a finite separable extension of degree divisible by $p^2$.
Hence, in this case $v_K^p$ is also tamely branching at $p$ as claimed and,
since $v_K^p$ is $\emptyset$-definable, we have found a $\emptyset$-definable
$p$-henselian valuation on $K$.
\item \emph{$Kv\neq Kv(p)$ and $\mathrm{char}(Kv_K^p) \neq p$:} 
Then, we have 
$\mathcal{O}_{v_K^p} \subseteq \mathcal{O}_v$ and thus $v_K^pK$ is not 
$p$-divisible. 
If $Kv_K^p\neq Kv_K^p(p)$ holds, then $v_K^p$ is again tamely branching at $p$.
Now assume that we have $Kv_K^p=Kv_K^p(p)$ and $[v_K^pK:pv_K^pK]=p$. 
Then either $vK$ is $p$-divisible or the value group of the valuation 
$\bar{v}_K^p$ induced by $v_K^p$ on $Kv$ is $p$-divisible.
The first case cannot happen since $v$ is tamely branching at $p$ by assumption.
Hence, assume that $v_K^p$ induces a valuation with $p$-divisible
value group on $Kv$.
As $\bar{v}_K^p$
is $p$-henselian of residue characteristic different to $p$ and its
residue field admits no Galois extension of degree $p$, this implies
$Kv=Kv(p)$. Thus, we get $v=v_K^p$ and so in either case $v_K^p$ is a
$\emptyset$-definable $p$-henselian valuation tamely branching at $p$.
\item \emph{$Kv\neq Kv(p)$ and $\mathrm{char}(Kv_K^p) = p$:}
Define $v_K^pK=:\Gamma$ and $v(p)=:\gamma$.
Consider the convex subgroup
$$\Delta_\gamma := \Set{ \delta \in \Gamma | [0,p\cdot|\delta|] 
\subseteq [0,p\cdot \gamma]+p\Gamma}$$
of $\Gamma$ as in Lemma \ref{Dg}.
We claim that $D_\gamma \neq \Gamma$ holds. 
Let $\langle \gamma \rangle$ be the convex subgroup of $\Gamma$ 
generated by $\gamma$. Then, for any $\delta \in \Delta_\gamma$ there
is some $\beta \in \Gamma$ with
$$\delta-p\cdot \beta \in \langle \gamma \rangle.$$
Note that $v(p)=0$ holds, so
$vK$ is a quotient of $\Gamma/\langle \gamma \rangle$. As $vK$ is not 
$p$-divisible, $\Gamma/\langle \gamma \rangle$ is not $p$-divisible. Hence,
we get $$\Delta_\gamma \subseteq \langle \gamma \rangle+ p\Gamma
\subsetneq \Gamma.$$
This proves the claim.

By the claim, there is
a non-trivial 
$\emptyset$-definable coarsening $u$ of $v_K^p$ on 
$K$ with value group $uK=\Gamma/\Delta_\gamma$. 
Lemma \ref{Dg} implies $uK\neq p\cdot uK$ and $\mathrm{char}(Ku)\neq p$. 
In particular, $u$ is a proper coarsening of 
$v_K^p$. Therefore, $u$ 
is $p$-henselian and $Ku\neq Ku(p)$ holds. Hence, $u$ is an 
$\emptyset$-definable $p$-henselian valuation tamely branching at $p$.
\end{enumerate}
\end{proof} 

We now give a Galois-theoretic consequence of the above.
Together with Theorem \ref{EKN}, Proposition \ref{ptame} yields:
\begin{Cor} \label{cor:tame}
Let $p$ be a prime and $K$ a field with $\mathrm{char}(K)\neq p$ and 
$\zeta_p \in K$. 
Take some $L\equiv K$. Then, if $G_K(p)$ has a non-trivial normal
abelian subgroup, so does $G_L(p)$.
\end{Cor}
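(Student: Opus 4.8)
The plan is to deduce the Corollary from Proposition \ref{ptame} and Theorem \ref{EKN} by a straightforward transfer-of-first-order-properties argument. First I would observe that the hypotheses $\mathrm{char}(K)\neq p$ and $\zeta_p\in K$ are elementary conditions in $\mathcal{L}_\mathrm{ring}$, so they pass to any $L\equiv K$; in particular $\mathrm{char}(L)\neq p$ and $\zeta_p\in L$, and if $p=2$ and $\mathrm{char}(K)=0$ then also $\sqrt{-1}\in L$, so the running assumptions of Proposition \ref{ptame} hold for $L$ whenever they hold for $K$. Next, suppose $G_K(p)$ has a non-trivial normal abelian subgroup.

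The second step handles the degenerate cases excluded in Theorem \ref{EKN}. If $G_K(p)\cong\mathbb{Z}_p$, or (for $p=2$) $G_K(p)$ is one of $\mathbb{Z}/2\mathbb{Z}$ or $\mathbb{Z}_2\ltimes\mathbb{Z}/2\mathbb{Z}$, then these Galois-group conditions are themselves $\mathcal{L}_\mathrm{ring}$-elementary (being expressible in terms of the degrees of finite extensions of $p$-power order, or equivalently via $\emptyset$-definable sentences about solvability of polynomials), so $G_L(p)$ is isomorphic to the same group and hence also has a non-trivial normal abelian subgroup. Having disposed of these, I may assume $G_K(p)$ is not on the exceptional list, so Theorem \ref{EKN} applies to $K$: since $G_K(p)$ has a non-trivial normal abelian subgroup, $K$ admits a $p$-henselian valuation tamely branching at $p$.

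The third step is the crux: by Proposition \ref{ptame}, $K$ then admits a $\emptyset$-definable $p$-henselian valuation tamely branching at $p$, i.e. there is a parameter-free $\mathcal{L}_\mathrm{ring}$-formula $\varphi(x)$ such that $\varphi(K)$ is the valuation ring of such a valuation. The statement ``$\varphi$ defines a valuation ring which is $p$-henselian with non-$p$-divisible value group, and in the case $[vK:pvK]=p$ the residue field admits a finite separable extension of degree divisible by $p^2$'' is a first-order $\mathcal{L}_\mathrm{ring}$-sentence (using \cite[Theorem 1.5]{Koe95} that $p$-henselianity of a valuation ring is elementary, and noting that the index condition and the residue-field extension condition are visibly expressible in $\mathcal{L}_\mathrm{ring}$ about $\varphi(K)$). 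Since $L\equiv K$, this sentence holds in $L$, so $\varphi(L)$ is the valuation ring of a $p$-henselian valuation on $L$ tamely branching at $p$; in particular $L$ admits such a valuation.

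Finally, I apply Theorem \ref{EKN} in the reverse direction to $L$: $L$ satisfies the running hypotheses, is not on the exceptional list (by the elementary transfer in step two, together with the observation that a field admitting a tamely branching $p$-henselian valuation cannot have $G_L(p)\cong\mathbb{Z}_p$ or the $p=2$ exceptions — or one simply notes these were already ruled out), and admits a $p$-henselian valuation tamely branching at $p$; hence $G_L(p)$ has a non-trivial normal abelian subgroup, which is what we wanted. The main obstacle is bookkeeping the exceptional cases of Theorem \ref{EKN} and making explicit why each Galois-theoretic hypothesis and conclusion is $\mathcal{L}_\mathrm{ring}$-elementary; the mathematical content is entirely carried by Proposition \ref{ptame} and Theorem \ref{EKN}.
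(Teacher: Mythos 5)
Your proof is correct and follows essentially the same route as the paper's: in the exceptional cases of Theorem \ref{EKN} one transfers the isomorphism type of $G_K(p)$ to $G_L(p)$, and otherwise one combines Theorem \ref{EKN} (in both directions) with the $\emptyset$-definable tamely branching $p$-henselian valuation supplied by Proposition \ref{ptame} and the fact that its defining properties are preserved under elementary equivalence. The only point where you are looser than the paper is the exceptional case: the paper deduces $G_K(p)\cong G_L(p)$ from $G_K\equiv G_L$ in the language of inverse systems together with the smallness of the groups in question (citing \cite{CDM}), whereas your direct claim that ``$G_K(p)\cong\mathbb{Z}_p$'' and its $p=2$ analogues are $\mathcal{L}_{ring}$-elementary is true but needs more justification than the degrees of finite $p$-power extensions alone.
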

\begin{proof} Assume $L \equiv K$. By \cite[Lemma 17]{CDM}, this implies 
$G_K \equiv G_L$ in the language of inverse systems introduced in 
\cite[\S 2]{CDM}. Moreover, as the maximal pro-$p$ quotient of a profinite group
is interpretable in this language, we even get $G_K(p) \equiv G_L(p)$.
If $G_K(p) \cong \mathbb{Z}_p$ or $p=2$ and either
$G_K(p) \cong \mathbb{Z}/2\mathbb{Z}$ or $\mathbb{Z}_2 \ltimes
\mathbb{Z}/2\mathbb{Z}$ holds, then -- as all these groups are small --
we conclude $G_K(p) \cong G_L(p)$ 
(\cite[Proposition 27]{CDM}). 
Hence, $G_L(p)$ also has a non-trivial abelian
normal subgroup.

Otherwise, $K$ admits a $p$-henselian valuation tamely branching at $p$
by Theorem \ref{EKN}. Thus, $K$ admits a $\emptyset$-definable such valuation
by Proposition \ref{ptame}, so $L$ also admits a
$p$-henselian valuation tamely branching at $p$. Using Theorem \ref{EKN}
once more, we get that $G_L(p)$ has a non-trivial normal abelian
subgroup.
\end{proof}

\subsection{Defining tamely branching henselian valuations} 

The main motivation to study henselian valuations tamely branching at some 
prime $p$ is the fact 
that they are encoded in the absolute Galois group of the field.

\begin{Thm}[{\cite[Theorem 1]{Koe03}}, see also {\cite[Theorem 5.4.3]{EP05}}]
A field $K$ admits a henselian valuation, tamely branching at some prime $p$ iff
$G_K$ has a non-procyclic Sylow subgroup $P \not\cong \mathbb{Z}_2 \rtimes
\mathbb{Z}/2\mathbb{Z}$ with a non-trivial abelian normal closed subgroup
$N$ of $P$. \label{tame}
\end{Thm}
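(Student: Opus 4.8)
The plan is to prove the two implications separately, in each case combining the ``$p$-local'' Theorem \ref{EKN} with the ramification theory of henselian valued fields (\cite[Chapter~5]{EP05}); this statement is Koenigsmann's theorem \cite{Koe03}, so what follows is a sketch of the strategy.

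\textbf{From the valuation to the group condition.} Suppose $v$ is henselian on $K$ and tamely branching at $p$, and put $q := \mathrm{char}(Kv)$, so $q \neq p$ (possibly $q = 0$). Ramification theory provides closed normal subgroups $V \trianglelefteq T \trianglelefteq G_K$ (wild inertia inside inertia) with $G_K/T \cong G_{Kv}$, with $V$ pro-$q$ (trivial if $q = 0$), and with $T/V$ abelian, carrying a $G_{Kv}$-action through the cyclotomic character and with non-trivial pro-$p$ part precisely because $vK \neq p\,vK$. Choose a Sylow pro-$p$ subgroup $P$ of $G_K$ and set $N := P \cap T$. Then $N$ is normal in $P$ (since $T \trianglelefteq G_K$), $N$ injects into the abelian group $T/V$ (since $V$ is pro-$q$ and $p \neq q$) and is hence abelian, and $N \neq \{1\}$ by the previous remark. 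That $P$ is non-procyclic and $\not\cong \mathbb{Z}_2 \rtimes \mathbb{Z}/2\mathbb{Z}$ is where condition~(3) of tamely branching enters: in the exact sequence $1 \to N \to P \to \bar P \to 1$, where $\bar P$ is the image of $P$ in $G_{Kv}$ (a Sylow pro-$p$ subgroup there), one has either $[vK:pvK] > p$, so that $N$ is already non-procyclic, or $[vK:pvK] = p$, in which case $N \cong \mathbb{Z}_p$ and condition~(3) forces $\bar P$ to be large (a separable residue extension of degree divisible by $p^2$); the cyclotomic action of $\bar P$ on $N$ ---  together with, when $\zeta_p \in Kv$, the Demushkin-type relations in the maximal pro-$p$ quotient of the henselization's Galois group --- then preclude $P$ from being procyclic or the pro-$2$ dihedral group.

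\textbf{From the group condition to the valuation.} Conversely, let $P$ be a non-procyclic Sylow pro-$\ell$ subgroup of $G_K$ with $P \not\cong \mathbb{Z}_2 \rtimes \mathbb{Z}/2\mathbb{Z}$ and a non-trivial abelian normal closed subgroup $N$, and put $E := (K^{\mathrm{sep}})^P$, so that $G_E = P$; as $P$ is pro-$\ell$ this gives $G_E(\ell) = P$ and $E(\ell) = E^{\mathrm{sep}}$. One checks the hypotheses of Theorem \ref{EKN}. First $\mathrm{char}(E) \neq \ell$: otherwise $G_E(\ell) = P$ is free pro-$\ell$ (Artin--Schreier theory), but a non-trivial free pro-$\ell$ group with a non-trivial abelian normal closed subgroup is $\cong \mathbb{Z}_\ell$, hence procyclic, a contradiction. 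Next $\zeta_\ell \in E$: $G_{E(\zeta_\ell)}$ has index dividing $\ell - 1$ in the pro-$\ell$ group $P$, hence equals $P$. If $\ell = 2$ and $\mathrm{char}(E) = 0$ one also wants $\sqrt{-1} \in E$; if this fails, the structure theory of pro-$2$ absolute Galois groups of fields not containing $\sqrt{-1}$ reduces matters, after the degree-$2$ base change to $E(\sqrt{-1})$, to a field with $\sqrt{-1}$, the hypothesis $P \not\cong \mathbb{Z}_2 \rtimes \mathbb{Z}/2\mathbb{Z}$ guaranteeing that $G_{E(\sqrt{-1})}$ stays non-procyclic with a non-trivial abelian normal closed subgroup. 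In every case Theorem \ref{EKN} applies and yields an $\ell$-henselian valuation $w$ tamely branching at $\ell$ on the relevant field; since there the maximal $(\ell)$-extension and the separable closure coincide, uniqueness of the extension of $w$ shows that $w$ is in fact \emph{henselian}.

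It remains to descend $w$ to $K$, and this I expect to be the main obstacle. Let $\bar w$ be the extension of $v := w|_K$ to $K^{\mathrm{sep}}$ with $\bar w|_E = w$; since $w$ is henselian on $E$, the decomposition group $Z \leq G_K$ of $\bar w$ contains $G_E = P$, hence contains a full Sylow pro-$\ell$ subgroup, so $[G_K : Z]$ is prime to $\ell$. One must show $Z = G_K$, i.e.\ that $v$ is henselian on $K$: the idea is to use that $Z = G_{K^h}$ is the absolute Galois group of a henselization of $(K,v)$ and so carries its own ramification filtration with residue quotient $G_{Kv}$ and tame inertia already ``visible'' inside $P \subseteq Z$, and to combine this with the prime-to-$\ell$ bound on the orbit $G_K/Z$. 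Finally, tracking $vK$ and $Kv$ along the prime-to-$\ell$ extension $E/K$ shows that $v$ is again tamely branching at $\ell$ --- condition~(3) surviving because a prime-to-$\ell$ base change alters neither the relevant divisibility of the value group nor the existence of a degree-$p^2$ separable residue extension. For the full details of this encoding of henselian valuations in $G_K$ we refer to \cite{Koe03} and \cite[Chapter~5]{EP05}.
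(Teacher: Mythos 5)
First, a point of comparison: the paper does not prove this statement at all --- it is imported verbatim from \cite{Koe03} (see also \cite[Theorem 5.4.3]{EP05}) --- so there is no in-paper argument to measure your sketch against; what you have written is an outline of Koenigsmann's own proof architecture (ramification theory for the forward direction; for the converse, passage to the fixed field $E$ of the Sylow subgroup, Theorem \ref{EKN} applied there, and descent back to $K$). The forward direction is essentially sound, though your exclusion of $P$ procyclic or $\cong \mathbb{Z}_2\rtimes\mathbb{Z}/2\mathbb{Z}$ in the case $[vK:pvK]=p$ is vaguer than it needs to be: condition (3) forces the image $\bar P$ of $P$ in $G_{Kv}$ to have order divisible by $p^2$, Artin--Schreier theory says a non-trivial finite closed subgroup of an absolute Galois group has order $2$, so $\bar P$ is infinite; an extension of an infinite pro-$p$ group by $\mathbb{Z}_p$ is never procyclic, and the dihedral group is excluded because there $\bar P\cong\mathbb{Z}/2\mathbb{Z}$. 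No appeal to Demushkin-type relations is needed or, as stated, justified.

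The genuine gap is exactly where you flag it: the descent of henselianity from $E$ to $K$. From $Z\supseteq P$ you obtain only that $[G_K:Z]$ is prime to $\ell$, and this does not formally imply $Z=G_K$; a priori $v|_K$ has a whole $G_K$-conjugacy class of extensions to $K^{\mathrm{sep}}$, and nothing you write rules out that this class has more than one element. Closing this is the substantial content of \cite[Theorem 5.4.3]{EP05}: one needs the descent theorem that if the fixed field of an $\ell$-Sylow subgroup of $G_K$ is henselian and is neither separably closed nor real closed, then $K$ itself is henselian, and its proof rests on F.\,K.~Schmidt's theorem (a field admitting two independent non-trivial henselian valuations is separably closed) applied to a comparison of the conjugate prolongations of $v|_K$. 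Your ``idea'' of combining the ramification filtration of $Z$ with the prime-to-$\ell$ index is not an argument. A second, smaller gap of the same kind occurs in the case $\ell=2$, $\sqrt{-1}\notin E$: you assert that $G_{E(\sqrt{-1})}$ remains non-procyclic with a non-trivial abelian normal closed subgroup, but if $N$ has order $2$ then $N\cap G_{E(\sqrt{-1})}$ may be trivial, and excluding this (again via Artin--Schreier and the hypothesis $P\not\cong\mathbb{Z}_2\rtimes\mathbb{Z}/2\mathbb{Z}$) is part of the work. As it stands the proposal is a correct road map that defers precisely the steps that make the theorem a theorem.
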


The absolute Galois group of a field $K$ is encoded up to elementary
equivalence (when considered in a language for profinite groups)
in the theory of $K$. We now ask whether the Galois-theoretic condition
occuring
in the Theorem above is elementary:
\begin{Q} Let $K$ be a field and $p$ a prime such that $K$ admits a henselian 
valuation tamely branching at $p$. Take $L \equiv K$. Does $L$
admit a henselian valuation tamely branching at $p$, i.e.\;is there a
non-procyclic Sylow subgroup 
$P_L \not\cong \mathbb{Z}_2 \rtimes
\mathbb{Z}/2\mathbb{Z}$ of $G_L$ \label{Q} 
admitting a non-trivial abelian normal closed subgroup?
\end{Q}

For a field $K$ with small absolute Galois group,
the answer to Question \ref{Q} is `yes': 
In case $G_K$ is small, we have $G_K \cong G_L$
(as profinite groups) for any $L$ with $L \equiv K$ 
(\cite[Proposition 4.2]{Kl74}).
The next Proposition gives an alternative way to see
this:
\begin{Prop} \label{smalltame}
Let $K$ be a field and $p$ a prime. Assume that $G_K$ is small.
If $K$ admits a henselian valuation $v$ tamely
branching at $p$, then there is some $\emptyset$-definable coarsening of 
$v$ which tamely branches at $p$.
\end{Prop}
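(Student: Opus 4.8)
The plan is to reduce the statement, via the by-now standard interpretation tricks, to a situation where the canonical $p$-henselian valuation $v_K^p$ is available as a $\emptyset$-definable valuation sitting above or below $v$, and then to mimic the case analysis already carried out in the proof of Proposition \ref{ptame}. First I would observe that since $G_K$ is small, so is $G_{K(\zeta_p)}$ (finite extensions of fields with small absolute Galois group again have small absolute Galois group), and $K(\zeta_p)$ is $\emptyset$-interpretable in $K$; moreover by Lemma \ref{infty} the unique prolongation of $v$ to $K(\zeta_p)$ is again tamely branching at $p$ (the value group changes only by a finite index, and the residue field extension is of degree prime to $p$, so the condition on separable extensions of degree divisible by $p^2$ is preserved). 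Hence we may assume $\zeta_p \in K$, and in the remaining problematic case $p=2$, $\mathrm{char}(K)=0$ we may further pass to $K(\sqrt{-1})$, so that $K \in \mathcal{K}_p$ and Theorem \ref{MT} applies: $v_K^p$ is $\emptyset$-definable.

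Next I would split into cases exactly as in Proposition \ref{ptame}, according to whether $Kv = Kv(p)$, and whether $\mathrm{char}(Kv_K^p) = p$. Since $v$ is henselian it is in particular $p$-henselian, so $v$ is comparable to $v_K^p$, and in each case the argument of Proposition \ref{ptame} produces a $\emptyset$-definable coarsening $u$ of $v_K^p$ which is tamely branching at $p$ as a valuation; the point to check is that in the relevant cases $u$ is in fact a coarsening of $v$ (not merely of $v_K^p$) and is henselian. When $Kv=Kv(p)$ we have $\mathcal{O}_v \subseteq \mathcal{O}_{v_K^p}$, so $v_K^p$ itself is a coarsening of $v$ and hence henselian, and the computation in Proposition \ref{ptame}(1) shows $v_K^p$ is tamely branching at $p$. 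When $Kv \neq Kv(p)$ we have $\mathcal{O}_{v_K^p}\subseteq \mathcal{O}_v$; if $\mathrm{char}(Kv_K^p)\neq p$ then the analysis in Proposition \ref{ptame}(2) gives that $v_K^p$ is tamely branching at $p$, and again $v_K^p$ is comparable to $v$ with $v_K^p$ refining $v$—but then one uses that $v$ itself is henselian tamely branching at $p$ and that $v$ coarsens $v_K^p$ to conclude directly; if $\mathrm{char}(Kv_K^p)=p$ then the coarsening $u = v_K^p / \Delta_{v(p)}$ constructed in Proposition \ref{ptame}(3) via Lemma \ref{Dg} has $\mathrm{char}(Ku)\neq p$, hence $u$ is a proper coarsening of $v_K^p$, hence $u$ is a coarsening of $v$ (as $v$ is the finest $p$-henselian valuation with residue characteristic $\neq p$ lying between these, or more simply: $v$ and $u$ are both $\neq p$-residue-characteristic coarsenings of the henselian $v_K^p$ and any such are comparable, with $u$ the finest), and $u$ is henselian as a coarsening of the henselian $v$.

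The main obstacle I anticipate is the bookkeeping in the third case ($\mathrm{char}(Kv_K^p)=p$): one must argue carefully that the $\emptyset$-definable coarsening $u$ of $v_K^p$ obtained from Lemma \ref{Dg} is genuinely a coarsening of $v$, using that $v$ has residue characteristic $\neq p$ (so $\mathcal{O}_{v_K^p} \subsetneq \mathcal{O}_v$) together with the fact that on a henselian field any two valuations of residue characteristic $\neq p$ that are coarsenings of a common henselian valuation are comparable, and that $u$ is the \emph{finest} coarsening of $v_K^p$ with residue characteristic $\neq p$ and no nontrivial $p$-divisible convex subgroup of its value group, which forces $\mathcal{O}_v \subseteq \mathcal{O}_u$ precisely because $vK$ is non-$p$-divisible and tamely branching. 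Once $u$ is known to be a henselian coarsening of $v$, the verification that $u$ is tamely branching at $p$—$\mathrm{char}(Ku)\neq p$, $uK$ non-$p$-divisible, and the separability condition, which is inherited from $v$ via lifting of irreducible separable polynomials through the place $Ku \to Kv$ exactly as in Proposition \ref{ptame}(1)—is routine. I would conclude by remarking that after undoing the interpretation steps, the $\emptyset$-definable coarsening on $K(\zeta_p)$ (or $K(\sqrt{-1},\zeta_p)$) restricts to a $\emptyset$-definable coarsening of $v$ on $K$ which is still tamely branching at $p$, since restriction along a finite prime-to-$p$ extension preserves non-$p$-divisibility of the value group and residue characteristic.
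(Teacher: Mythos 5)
Your proof never makes essential use of the hypothesis that $G_K$ is small (it appears only in the harmless remark that $G_{K(\zeta_p)}$ is small), and this is fatal: Example \ref{PZ2} exhibits a field $L$ that admits a henselian valuation tamely branching at every prime $p$ but no non-trivial $\emptyset$-definable henselian valuation at all, so any argument ignoring smallness must break. It breaks in your second case. There $Kv\neq Kv(p)$ and $\mathrm{char}(Kv_K^p)\neq p$, so $v_K^p$ is a \emph{refinement} of $v$; Proposition \ref{ptame}(2) makes it a $\emptyset$-definable $p$-henselian valuation tamely branching at $p$, but it is neither a coarsening of $v$ nor, in general, henselian, and your ``one uses that $v$ itself is henselian \dots to conclude directly'' produces nothing --- the field $L$ of Example \ref{PZ2} falls exactly into this case. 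Your third case contains a further unjustified step: writing $\Delta_v$ for the convex subgroup of $\Gamma=v_K^pK$ corresponding to $v$ and $\gamma=v_K^p(p)$, Lemma \ref{Dg} only gives $\langle\gamma\rangle\subseteq\Delta_\gamma$, and nothing forces $\Delta_v\subseteq\Delta_\gamma$; for instance, in $\Gamma=\mathbb{Z}\oplus\mathbb{Z}\oplus\mathbb{Z}$ ordered lexicographically with $\gamma$ generating the smallest non-trivial convex subgroup one computes $\Delta_\gamma=\langle\gamma\rangle$, which is properly contained in the middle convex subgroup, so $u$ can be a proper refinement of $v$ rather than a coarsening. (Your first case, $Kv=Kv(p)$, is fine.)

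The paper's proof takes a different route in which smallness does all the work: since $G_K$ is small, $[vK:pvK]$ is finite, so by Lemma \ref{infty} the non-$p$-divisible group $vK$ is not $p$-antiregular, and Proposition \ref{antireg} yields a non-trivial $\emptyset$-definable henselian coarsening $w$ of $v$. A second application of smallness, now to $G_{Kw}$, shows the valuation induced by $v$ on $Kw$ has finite archimedean rank, whence a rank-$1$ coarsening with non-$p$-divisible value group is $\emptyset$-definable and composes with $w$ to give a $\emptyset$-definable coarsening $w'$ of $v$ with $w'K\neq pw'K$; tame branching of $w'$ is then verified using that $G_{Kv}$ is a quotient of $G_{Kw'}$. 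To repair your approach you would have to inject the smallness hypothesis precisely where $v_K^p$ properly refines $v$, which in effect forces you back to the antiregularity dichotomy of Section \ref{sec:anti}.
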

\begin{proof} We may assume that $K$ cotains a primitive $p$th root of
unity $\zeta_p$: As in previous proofs, $K(\zeta_p)$ is an
$\emptyset$-interpretable Galois extension of $K$. Let $u$ be a 
henselian valuation on $K$ and $u'$ its 
unique extension to $K(\zeta_p)$.
 Now, as the index $[K(\zeta_p):K]$ is prime to $p$, $u$ is tamely 
branching at $p$ if and only if $u'$ is tamely branching at $p$.
Thus, any parameter-free definition of a coarsening of $v'$
on $K(\zeta_p)$
which tamely branches at $p$ induces an $\emptyset$-definable  
such coarsening of $v$ on $K$.

Let $v$ be a henselian valuation on $K$ which tamely branches at $p$.
Then $vK$ is not $p$-divisible and -- as $G_K$ is small --
not $p$-antiregular (see the proof of Corollary \ref{Cor1}).
Thus, some non-trivial coarsening $w$ of $v$ is
$\emptyset$-definable by Proposition \ref{antireg}.
Following the proof of Proposition \ref{antireg}, we get that 
either the value group of $w$
is $p$-regular and non-$p$-divisible or $w$ is the finest coarsening of
$v$ with $p$-divisible value group.

We claim that there is an $\emptyset$-definable coarsening $w'$
of $v$ with non-$p$-divisible value group.
Assume first that $wK$ is $p$-regular and non-$p$-divisible:
Then, we can choose $w'=w$.
Assume now that $w$ is the finest coarsening of
$v$ with $p$-divisible value group. Then, $v$ induces a henselian
valuation $\bar{v}$ on $Kw$ such that its value group $\bar{v}(Kw)$
is not $p$-divisible and has no
non-trivial $p$-divisible quotient. 
In particular, $\bar{v}(Kw)$ is either $p$-antiregular or has finite 
(archimedean) rank.
As $G_{Kw}$ is a quotient of $G_K$ 
(\cite[Lemma 5.2.6]{EP05}), $G_{Kw}$ is also small and hence $Kw$ admits no 
henselian valuation with non-$p$-divisible $p$-antiregular value group 
(see again the proof of Corollary \ref{Cor1}).
Thus, $\bar{v}$ is a henselian valuation of finite (archimedean) rank on $Kw$
such that no non-trivial coarsening of it has $p$-divisible value group.
In particular, $\bar{v}$ has
 a (henselian) 
rank-$1$ coarsening $u$ such that the value group $u(Kw)$ is not 
$p$-divisible. Hence, by \cite[Lemma 3.6]{Koe04} (alternatively Theorem
\ref{Hong}), $u$ is $\emptyset$-definable on $Kw$. 
Thus, the composition $w'=u \circ w$ is a $\emptyset$-definable henselian
valuation on $K$ with non-$p$-divisible value group.
This proves the claim.

We have now found a $\emptyset$-definable coarsening
$w'$ of $v$ such that $w'K$ is not $p$-divisible. We
claim that $w'$ is tamely branching at $p$. Since $w'$ coarsens $v$, we have 
$\mathrm{char}(Kw')\neq p$. Assume $p^2 \nmid G_{Kw'}$. Then, as 
$G_{Kv}$ is a quotient of $G_{Kw'}$ (\cite[Lemma 5.2.6]{EP05}), we also get
$p^2 \nmid G_{Kv}$. As $v$ is tamely branching at $p$, we
get $[vK:pvK]\neq p$. 
Furthermore, $p^2 \nmid G_{Kw'}$ implies that
all valuations on $Kw'$ have $p$-divisible value group. Thus,
we get $[w'K:pw'K]=[vK:pvK]\neq p$. Therefore, $w'$ is
tamely branching at $p$.
\end{proof}

In general, Question \ref{Q} has however a negative answer:
\begin{Ex} \label{PZ2}
Consider the field $K$ as constructed in Example \ref{PZ}, so $K$
is elementarily equivalent to a henselian field but not henselian,
$v_K^2$ is $\emptyset$-definable and its value group $v_K^2K$ is
$p$-antiregular value group for all primes $p$. 

By \cite[Lemma 3.4]{PZ78}, there exists some elementary extension $L \succ K$
such that $L$ is henselian. We now show that the canonical
 henselian valuation $v_L$
on $L$ is tamely branching at all primes $p$.

Note that 
the restriction of $v_L$
to $K$ is henselian and thus trivial. In particular, we get 
$\mathrm{char}(Lv_L)=0$.
Furthermore, $v_L$ is comparable to $v_L^p$.
Since $v_L^p$ and $v_K^p$ are definable by the same 
formula and $p$-antiregularity is an elementary property of an ordered
abelian group, $v_L^pL$ is $p$-antiregular. 
Thus, $v_LL$ is not $p$-divisible and also $p$-antiregular. 
By Lemma \ref{infty}, we have $[v_LL:pv_LL]=\infty$.

Overall, we get that $v_L$ is tamely branching at any prime $p$. In particular,
$L$ admits no $\emptyset$-definable non-trivial henselian valuation.
Proposition \ref{tb} below shows that $L$ admits nonetheless for every prime
$p$ a parameter-definable henselian valuation tamely branching at $p$.
\end{Ex}

We immediately get the following
\begin{Cor} \label{cor:nottame}
Admitting a henselian valuation tamely branching at $p$ is not an elementary 
property, i.e., there are fields $K \prec L$ such that
$G_L$ has a non-procyclic Sylow subgroup 
$P_L \not\cong \mathbb{Z}_2 \rtimes
\mathbb{Z}/2\mathbb{Z}$ 
admitting a non-trivial abelian normal closed subgroup, but
$G_K$ does not.
\end{Cor}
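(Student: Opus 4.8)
The plan is simply to read the statement off Example~\ref{PZ2} via the Galois-theoretic dictionary provided by Theorem~\ref{tame}. Recall that in Example~\ref{PZ2} one takes the Prestel--Ziegler-type field $K$ of Example~\ref{PZ}, which is $t$-henselian but not henselian, together with an elementary extension $L \succ K$ which is henselian and whose canonical henselian valuation $v_L$ is tamely branching at every prime $p$. These two fields will be the witnesses $K \prec L$ required by the corollary.

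First I would apply Theorem~\ref{tame} to $L$: since $L$ carries a henselian valuation tamely branching at some prime, $G_L$ has a non-procyclic Sylow subgroup $P_L \not\cong \mathbb{Z}_2 \rtimes \mathbb{Z}/2\mathbb{Z}$ possessing a non-trivial abelian normal closed subgroup. Then I would apply Theorem~\ref{tame} to $K$ in the converse direction: by construction $K$ admits no non-trivial henselian valuation whatsoever, hence none tamely branching at any prime, so $G_K$ has no non-procyclic Sylow subgroup $P \not\cong \mathbb{Z}_2 \rtimes \mathbb{Z}/2\mathbb{Z}$ with a non-trivial abelian normal closed subgroup. Since $K \prec L$, this exhibits the desired pair of elementarily equivalent fields on which the Galois-theoretic condition fails and holds respectively.

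There is no real obstacle in this argument: all the work has already been done in Example~\ref{PZ2} (and, behind it, in Example~\ref{PZ} and Proposition~\ref{tb}). The only point worth stating explicitly is that admitting \emph{some} henselian valuation tamely branching at a prime is precisely the group-theoretic condition appearing in Theorem~\ref{tame}, so that the failure of elementarity of the field-theoretic property is literally the same phenomenon as the failure of elementarity of the condition on Sylow subgroups of the absolute Galois group.
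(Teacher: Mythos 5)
Your proof is correct and is essentially the paper's own argument: the paper derives this corollary immediately from Example~\ref{PZ2} by reading the pair $K \prec L$ (with $L$ henselian and $v_L$ tamely branching at every prime, while $K$ is not henselian at all) through the equivalence of Theorem~\ref{tame} in both directions, exactly as you do.
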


As a consequence, not every field which admits a henselian valuation 
tamely branching at $p$ admits a $\emptyset$-definable such.
The next proposition shows that, nevertheless, there is always a definable 
such:
\begin{Prop} Let $K$ be a field and $p$ a prime.
Assume $K$ admits a henselian valuation $v$ tamely branching at $p$.
Then $K$ admits a definable such (using at most $1$ parameter). \label{tb}
\end{Prop}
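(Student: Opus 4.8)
The plan is to mimic the structure of Proposition~\ref{ptame}, but now working with the canonical \emph{henselian} valuation $v_K$ in place of $v_K^p$, and using the parameter-definability results of Section~\ref{sec:AEJ} where $\emptyset$-definability is not available. First I would reduce to the case $\zeta_p \in K$ exactly as in the proofs of Propositions~\ref{ptame}, \ref{antireg}, \ref{nondiv} and \ref{smalltame}: passing to the $\emptyset$-interpretable extension $K(\zeta_p)$, noting that $[K(\zeta_p):K]$ is prime to $p$, that the unique prolongation $v'$ of $v$ is still henselian and (by Lemma~\ref{infty}) has non-$p$-divisible value group, that tame branching at $p$ is preserved and reflected under this prime-to-$p$ extension, and that a ($\{t\}$-)definable coarsening of $v'$ on $K(\zeta_p)$ restricts to one on $K$. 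So from now on $\zeta_p \in K$, hence $K \neq K(p)$, and $v$ is henselian and comparable to $v_K$.

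Next I would split into cases according to the relationship between $v$ and $v_K$ and the residue characteristic of $v_K$. Since $v$ is henselian and $vK$ is not $p$-divisible, $[vK:pvK]$ could equal $p$ or be larger. If $\operatorname{char}(Kv_K) \neq p$ and $v_K$ is a coarsening of $v$ with non-$p$-divisible value group, I would try to argue that $v_K$ itself is tamely branching at $p$ — the residue-characteristic and non-$p$-divisibility conditions transfer as in Proposition~\ref{ptame}(2), and in the borderline subcase $[v_KK:pv_KK]=p$ one uses that $G_{Kv_K}$ is a quotient of $G_{Kv}$ (\cite[Lemma 5.2.6]{EP05}) together with tame branching of $v$ to lift a separable extension of degree divisible by $p^2$ (via lifting an irreducible separable polynomial over the residue field, exactly the argument in case~(1) of Proposition~\ref{ptame}). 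Since $v_K$ need not be $\emptyset$-definable, here I would instead invoke Theorem~\ref{1} or Proposition~\ref{nondiv}: if $v_KK$ is not $p$-antiregular we get a $\emptyset$-definable coarsening, and if it is $p$-antiregular we still get a $\{t\}$-definable coarsening with non-$p$-divisible value group by Proposition~\ref{nondiv}; in either case one then checks as above that this definable coarsening is tamely branching at $p$ (residue characteristic $\neq p$ is inherited by any coarsening; the $[w'K:pw'K]=p$ subcase is handled by the quotient argument on absolute Galois groups as in the last paragraph of Proposition~\ref{smalltame}).

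The remaining, and I expect hardest, case is $\operatorname{char}(Kv_K)=p$, mirroring case~(3) of Proposition~\ref{ptame}: here one cannot coarsen all the way down to $v_K$. I would set $\Gamma := v_KK$ (or rather the value group of some $\emptyset$-definable, resp.\ $\{t\}$-definable, valuation refining $v$ — in mixed characteristic one may need to first produce such a valuation via Proposition~\ref{nondiv}), let $\gamma := v(p)$ viewed in $\Gamma$, and form the convex subgroup $\Delta_\gamma = \{\delta \in \Gamma \mid [0,p\cdot|\delta|] \subseteq [0,p\cdot\gamma]+p\Gamma\}$ of Lemma~\ref{Dg}. The key points, following Lemma~\ref{Dg} and the claim in Proposition~\ref{ptame}(3), are that $\Delta_\gamma \neq \Gamma$ (because $vK$, being a quotient of $\Gamma/\langle\gamma\rangle$ since $v(p)=0$, is not $p$-divisible, forcing $\Delta_\gamma \subseteq \langle\gamma\rangle + p\Gamma \subsetneq \Gamma$), and that the resulting coarsening $u$, corresponding to quotienting by $\Delta_\gamma$, is a proper coarsening of $v_K$ with $\operatorname{char}(Ku)\neq p$ and $uK \neq p\cdot uK$, hence $p$-henselian — indeed henselian, being a coarsening of a henselian valuation. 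It remains to verify the third tame-branching condition for $u$: if $[uK:puK]=p$ one again argues via $G_{Ku}$ being a quotient of $G_{Kv}$ that $Ku$ has a separable extension of degree divisible by $p^2$, or shows $[uK:puK]\neq p$ directly from $[vK:pvK]$ via the intermediate quotient structure. The main obstacle throughout is bookkeeping the tame-branching condition~(3) across the coarsenings — keeping track of $[vK:pvK]$ versus $[wK:pwK]$ and transferring the residue-field extension condition up and down — together with ensuring that in mixed characteristic the valuation whose value group we apply Lemma~\ref{Dg} to is genuinely definable (at the cost of one parameter) rather than merely canonical.
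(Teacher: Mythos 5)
Your toolkit is the right one (reduction to $\zeta_p\in K$, Lemma~\ref{Dg} with one parameter, Hong's theorem, the Galois-group argument for the degree-$p^2$ condition), but your case division does not line up with what is actually definable, and this leaves real gaps. The paper splits on whether $Kv_K=Kv_K(p)$: if so, $v_K^p$ is a \emph{coarsening} of $v_K$ and one simply invokes Proposition~\ref{ptame} to get a $\emptyset$-definable $p$-henselian valuation tamely branching at $p$ which, being a coarsening of the henselian $v_K$, is henselian; if not, then $\mathcal{O}_{v_K^p}\subseteq\mathcal{O}_{v_K}\subseteq\mathcal{O}_v$, and one runs Lemma~\ref{Dg} inside $\Gamma=v_K^pK$ with $\gamma$ a lift of an arbitrary element of $vK$ (one parameter), with the fallback that if $\Gamma/\langle\gamma\rangle$ is $p$-divisible for \emph{every} such $\gamma$ then $vK$ is $p$-regular and $v$ itself is definable by Theorem~\ref{Hong}. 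Your split on $\mathrm{char}(Kv_K)$ does not control where the $\emptyset$-definable anchor $v_K^p$ sits relative to $v$: for instance, if $\mathrm{char}(Kv_K)=p$ but $Kv_K=Kv_K(p)$, then $v_K^p$ may be a proper coarsening of $v_K$ with $\mathrm{char}(Kv_K^p)\neq p$, so the value of $p$ in the value group of the only available definable valuation is $0$, your $\gamma$ degenerates, and your ``hardest case'' construction produces nothing; the correct move there is Proposition~\ref{ptame}, which you never invoke.

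Two further concrete problems. First, in your $\mathrm{char}(Kv_K)\neq p$ case you lean on Propositions~\ref{antireg}/\ref{nondiv}, but \ref{antireg} can return the coarsening by $\Delta_0$, whose value group is $p$-\emph{divisible} and which therefore can never be tamely branching at $p$; you give no fallback for this branch (the paper's fallback is precisely the ``$\Delta_\gamma\subsetneq\Gamma$ for some $\gamma\in vK$, else $vK$ is $p$-regular and Hong applies to $v$'' dichotomy, which is not the same as the trichotomy inside \ref{antireg}). Second, your verification of condition~(3) ``via $G_{Ku}$ being a quotient of $G_{Kv}$'' has the quotient the wrong way round: the argument from Proposition~\ref{smalltame} needs $G_{Kv}$ to be a quotient of $G_{Ku}$ (i.e.\ $u$ a coarsening of $v$) and runs contrapositively ($p^2\nmid \#G_{Ku}$ forces $p^2\nmid\#G_{Kv}$, hence $[vK:pvK]\neq p$, hence $[uK:puK]\neq p$). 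When the $\Delta_\gamma$-coarsening happens to \emph{refine} $v$, that argument is unavailable; one must instead observe that $[uK:puK]=p$ would force the convex subgroup $H/\Delta_\gamma$ of $uK$ to be $p$-divisible, which Lemma~\ref{Dg} forbids unless $u=v$. None of these points is fatal to the overall strategy, but as written the proof does not go through.
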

\begin{proof} Like in the proof of Proposition
\ref{smalltame}, we may assume $\zeta_p \in K$.
We split the proof into two cases:
\begin{enumerate}
\item If $Kv_K=Kv_K(p)$, then we have $v_K \subseteq v_K^p$.
Proposition \ref{ptame} shows that there is an $\emptyset$-definable 
$p$-henselian valuation $w$ which coarsens $v_K^p$ and which tamely 
branches at $p$. As $w$ is a coarsening of $v_K$, this gives
an $\emptyset$-definable henselian valuation tamely branching at $p$.
\item If $Kv_K\neq Kv_K(p)$, then we have $v_K^p \subseteq v_K \subseteq v$.
Define $\Gamma = v_K^pK$. For any $\gamma \in vK$ let
$\langle \gamma \rangle$ be the convex subgroup generated by $\gamma$ in
$\Gamma$. We consider once more the convex subgroup
$$\Delta_\gamma = \Set{\delta \in \Gamma | [0,p\cdot|\delta|] 
\subseteq [0,p\cdot \gamma]+p\Gamma}$$
of $\Gamma$ as in Lemma \ref{Dg}. Note that -- as in the proof of 
Proposition \ref{nondiv} --
$\Gamma=\Delta_\gamma$ implies that the
quotient $\Gamma/\langle \gamma \rangle$ is $p$-divisible.
Thus, if there is some $\gamma \in vK$ such that 
$\Gamma/\langle \gamma \rangle$ is
not $p$-divisible, then we get a definable coarsening $u$ of $v$
with $uK=\Gamma/\Delta_\gamma$ which tamely branches at $p$.

On the other hand, if $\Gamma/\langle \gamma \rangle$ is
$p$-divisible for all $\gamma \in vK$, then
$vK/\langle \gamma \rangle$ is also
$p$-divisible for all $\gamma \in vK$. This implies that $vK/\tilde{\Delta}$
is $p$-divisible for all convex subgroups $\tilde{\Delta}\leq vK$.
Thus, $vK$ is $p$-regular but not $p$-divisible and thus $\emptyset$-definable 
by Theorem \ref{Hong}.
\end{enumerate}
\end{proof}

\section*{Acknowledgements} The authors would like
to thank Will Anscombe, Raf Cluckers and Immanuel Halupczok for helpful
discussions and comments.

\bibliographystyle{alpha}
\bibliography{franzi}
\end{document}